\numberwithin{equation}{section}
\numberwithin{figure}{section}
\theoremstyle{plain}
\newtheorem{thm}{\protect\theoremname}[section]
\theoremstyle{plain}
\newtheorem{cor}[thm]{\protect\corollaryname}
\theoremstyle{definition}
\newtheorem{defn}[thm]{\protect\definitionname}
\theoremstyle{plain}
\newtheorem{lem}[thm]{\protect\lemmaname}
\theoremstyle{plain}
\newtheorem{prop}[thm]{\protect\propositionname}
\newcommand{\lyxaddress}[1]{
	\par {\raggedright #1
	\vspace{1.4em}
	\noindent\par}
}
\numberwithin{equation}{section}
\numberwithin{figure}{section}
\theoremstyle{plain}
\theoremstyle{plain}
\newtheoremstyle{boldremark}
    {\dimexpr\topsep/2\relax} 
    {\dimexpr\topsep/2\relax} 
    {}          
    {}          
    {\bfseries} 
    {.}         
    {.5em}      
    {}          
\theoremstyle{boldremark}
\newtheorem{brem} [thm] {Remark} 
    \rule{\linewidth}{0.5ex}\newline%
\providecommand{\definitionname}{Definition}
\providecommand{\lemmaname}{Lemma}
\providecommand{\theoremname}{Theorem}
\providecommand{\corollaryname}{Corollary}
\providecommand{\definitionname}{Definition}
\providecommand{\lemmaname}{Lemma}
\providecommand{\propositionname}{Proposition}
\providecommand{\theoremname}{Theorem}
\begin{document}
\title{\selectlanguage{english}%
\textbf{On the second-order regularity of solutions to widely singular or degenerate elliptic
equations}}
\author{\selectlanguage{english}%
Pasquale Ambrosio, Antonio Giuseppe Grimaldi, Antonia Passarelli di
Napoli}
\date{\selectlanguage{english}%
May 30, 2025}
\maketitle
\begin{abstract}
\noindent We consider local weak solutions to PDEs of the type 
\[
-\,\mathrm{div}\left((\vert Du\vert-\lambda)_{+}^{p-1}\frac{Du}{\vert Du\vert}\right)=f\,\,\,\,\,\,\,\text{in}\,\,\Omega,
\]
where $1<p<\infty$, $\Omega$ is an open subset of $\mathbb{R}^{n}$ for $n\geq2$, $\lambda$ is a positive constant and $(\,\cdot\,)_{+}$ stands for the positive part. Equations of this form are widely degenerate for $p\ge 2$ and widely singular for $1<p<2$. We establish higher differentiability results for a suitable nonlinear function of the gradient $Du$ of the local weak solutions, assuming that $f$ belongs to the local Besov space $B^{(p-2)/p}_{p',1,loc}(\Omega)$ when $p>2$, and that $f\in L_{loc}^{{\frac{np}{n(p-1)+2-p}}}(\Omega)$ if $1<p\leq2$. The conditions on the datum $f$ are essentially sharp. As a consequence, we obtain the local higher integrability of $Du$ under the same minimal assumptions on $f$. For $\lambda=0$, our results give back those contained in \cite{CGPdN,Irv}.
\end{abstract}
\noindent \textbf{Mathematics Subject Classification:} 35J70, 35J75,
35J92, 49K20.

\noindent \textbf{Keywords:} Degenerate elliptic equations; singular
elliptic equations; sharp second-order regularity.

\section{Introduction}

\noindent $\hspace*{1em}$In this paper, we are interested in the local
weak solutions to the strongly singular or degenerate elliptic equation
\begin{equation}
-\,\mathrm{div}\left((\vert Du\vert-\lambda)_{+}^{p-1}\frac{Du}{\vert Du\vert}\right)=f\,\,\,\,\,\mathrm{in}\,\,\Omega,\label{eq:degenerate}
\end{equation}
where $1<p<\infty$, $\Omega$ is an open subset of $\mathbb{R}^{n}$
($n\geq2$), $\lambda>0$ is a fixed parameter and $(\,\cdot\,)_{+}$
stands for the positive part. The peculiarity of equation \eqref{eq:degenerate}
is that it is uniformly elliptic only outside the ball centered at
the origin with radius $\lambda$, where its principal part behaves
asymptotically as the classical $p$-Laplace operator. Therefore,
the study of such an equation fits into the wider class of the asymptotically
regular problems that have been extensively studied starting from
the pioneering paper \cite{CE} (see also \cite{CoFi1,CoFi2,CGPdN1,CGPdN2,CGM,EMM,FFM,FPdNV2,GiaMo,LPdNV,Ray,ScheS}
for extensions to various other settings).\\
$\hspace*{1em}$As our main result, here we establish the local Sobolev
regularity of a nonlinear function of the gradient $Du$ of the weak
solutions to equation (\ref{eq:degenerate}), by assuming that the
datum $f$ belongs to a suitable local Besov space when $p>2$ (see
Theorem \ref{thm:theo1} below) and that $f\in L_{loc}^{{\frac{np}{n(p-1)+2-p}}}(\Omega)$
if $1<p\leq2$ (see Theorem \ref{thm:theo2}). These results, in turn,
imply the local higher integrability of $Du$ under the same hypotheses
on the function $f$ (cf. Corollary \ref{cor:corollario3}).\\
$\hspace*{1em}$Before specifying in detail the assumption on $f$
in the case $p>2$, we wish to discuss some results already available
in the literature. A common aspect of nonlinear elliptic problems
with growth rate $p\geq2$ is that the higher differentiability is
proven for a nonlinear function of the gradient that takes into account
the growth of the structure function of the equation. Indeed, already
for the $p$-Poisson equation (which is obtained from (\ref{eq:degenerate})
by setting $\lambda=0$), the higher differentiability is established
for the function 
\[
V_{p}(Du):=\,\vert Du\vert^{\frac{p-2}{2}}Du\,,
\]
as can be seen in many papers, starting from the pioneering one by Uhlenbeck \cite{Uhlenbeck}. In case of widely degenerate problems, this
phenomenon persists and higher differentiability results hold true
for the function
\begin{equation}
H_{\frac{p}{2}}(Du):=\,(\vert Du\vert-\lambda)_{+}^{p/2}\frac{Du}{\left|Du\right|}\label{eq:HP2}
\end{equation}
(see \cite{Am} and \cite[Theorem 4.2]{BraCaSan}). However, this function does
not provide any information about the second-order regularity of the
solutions in the set where the equation becomes degenerate. Actually,
since every $\lambda$-Lipschitz function is a solution of the homogeneous
elliptic equation 
\[
\mathrm{div}\left((\vert Du\vert-\lambda)_{+}^{p-1}\frac{Du}{\vert Du\vert}\right)=0\,,
\]
no more than Lipschitz regularity can be expected for the solutions:
in this regard, see \cite{BoDuGiPa,Brasco,BraCaSan,Gri,Ru}.\\
\noindent$\hspace*{1em}$In this paper, the nonlinear function of the gradient that gains higher weak differentiability needs to be chosen with two main features: on the one hand, as in the case of the function in \eqref{eq:HP2}, it has to vanish in the region $\{\vert Du\vert\leq\lambda\}$; on the other hand, it has to compensate for the loss of uniform ellipticity of equation  \eqref{eq:degenerate} as $\vert Du\vert \to \lambda^+$. Indeed, defining the function $H_{p-1}:\mathbb{R}^{n}\rightarrow\mathbb{R}^{n}$
by 
\begin{equation*}
H_{p-1}(\xi):=\begin{cases}
\begin{array}{cc}
(\vert\xi\vert-\lambda)_{+}^{p-1}\,\frac{\xi}{\left|\xi\right|} & \,\,\mathrm{if}\,\,\,\xi\neq0,\\
0 & \,\,\mathrm{if}\,\,\,\xi=0,
\end{array}\end{cases}\label{eq:Hfun}
\end{equation*}
and denoting, in accordance with \cite{DeMi,MingRad}, the ellipticity ratio of equation (1.1) by
\begin{equation}
\mathcal{R}(\xi)\,:=\,\frac{\text{the highest eigenvalue of $D_{\xi}H_{p-1}(\xi)$}}{\text{the lowest eigenvalue of $D_{\xi}H_{p-1}(\xi)$}}\,\approx\,\frac{\vert \xi\vert}{\vert \xi\vert -\lambda}\,,\,\,\,\,\,\,\,\,\,\,\,\text{for}\,\,\vert \xi\vert > \lambda\,,\label{eq:ellratio}
\end{equation}
one sees that this quotient clearly blows up as $\vert \xi\vert \to \lambda^+$, unless $\lambda=0$ (cf. Lemma \ref{lem:qform} below).\\
\noindent$\hspace*{1em}$To provide context for the main aim of this paper, we would like to point out the paper \cite{Irv}, where Irving and Koch obtained some differentiability results for relaxed minimizers of vectorial convex functionals with non-standard growth of the type
$$\int_\Omega \left[F(x,Du)- f \cdot u\right] dx.$$
In particular, for the weak solutions $u\in W^{1,p}(\Omega)$ of the $p$-Poisson equation, they proved that
\noindent 
\begin{equation}
V_{p}(Du)\,\in\,W_{loc}^{1,2}(\Omega,\mathbb{R}^{n})\label{eq:Uhlenbeck}
\end{equation}
if the function $f$ belongs to the Besov space  $B_{p',1}^{\frac{p-2}{p}}(\Omega)$, with $p>2$. Their assumption on $f$ is essentially sharp,
in the sense that the above result is false if 
\[
f\in B_{p',1}^{s}(\Omega)\,\,\,\,\,\,\,\,\mathrm{with}\,\,\,\,\,s<(p-2)/p\,.
\]
Indeed, Brasco and Santambrogio \cite[Section 5]{BS}
showed with an explicit example that condition \eqref{eq:Uhlenbeck} may not hold if $f$ belongs to a fractional
Sobolev space $W_{loc}^{\sigma,p'}(\mathbb{R}^{n})$ with {$0<\sigma<(p-2)/p$},
which is continuously embedded into $B_{p',1,loc}^{s}(\mathbb{R}^{n})$
whenever $s\in(0,\sigma)$ (see Lemma \ref{lem:emb} below).
\noindent $\hspace*{1em}$This work is in the spirit of the ones mentioned above. Indeed, our main aim here is to find the assumptions to impose on the datum $f$ \textit{in the scale of local Besov or Lebesgue
spaces} to obtain the $W^{1,2}$-regularity of a nonlinear function of the gradient of weak solutions to the widely degenerate or singular equation \eqref{eq:degenerate}. In order to state our main results, we introduce the function
\begin{equation}
\mathcal{G}_{\lambda}(t):=\int_{0}^{t}\frac{\omega^{\frac{p}{2}+\frac{1}{p-1}}}{(\omega+\lambda)^{1+\frac{1}{p-1}}}\,d\omega\,\,\,\,\,\,\,\,\,\,\mathrm{for}\,\,t\geq0.\label{eq:Gfun}
\end{equation}
Moreover, for $\xi\in\mathbb{R}^{n}$ we define
the following vector-valued function:
\begin{equation}
\mathcal{V}_{\lambda}(\xi):=\begin{cases}
\begin{array}{cc}
\mathcal{G}_{\lambda}((\vert\xi\vert-\lambda)_{+})\,\frac{\xi}{\left|\xi\right|} & \,\,\,\mathrm{if}\,\,\,\vert\xi\vert>\lambda,\\
0 & \,\,\,\mathrm{if}\,\,\,\vert\xi\vert\leq\lambda.
\end{array}\end{cases}\label{eq:Vfun}
\end{equation}
Notice that, for $\lambda=0$, we
have 
\begin{equation}
\mathcal{V}_{0}(\xi)=\,\frac{2}{p}\,V_{p}(\xi):=\,\frac{2}{p}\,\vert\xi\vert^{\frac{p-2}{2}}\xi\,.\label{eq:V0}
\end{equation}
At this point, our first result reads as follows.
\begin{thm}
\noindent \label{thm:theo1} \foreignlanguage{british}{Let $n\geq2$,
$p>2$, $\lambda\geq0$ and $f\in B_{p',1,loc}^{\frac{p-2}{p}}(\Omega)$,
}where $p'=p/(p-1)$ is the conjugate exponent of $p$. Moreover, let $u\in W_{loc}^{1,p}(\Omega)$ be a local weak solution
of equation $\mathrm{(\ref{eq:degenerate})}$. Then 
\[
\mathcal{V}_{\lambda}(Du)\,\in\,W_{loc}^{1,2}(\Omega,\mathbb{R}^{n})\,.
\]
Furthermore, for every pair of concentric balls $B_{r}\subset B_{R}\Subset\Omega$
we have 
\begin{align}
 & \int_{B_{r/4}}\left|D\mathcal{V}_{\lambda}(Du)\right|^{2}dx\nonumber \\
 & \,\,\,\,\,\,\,\leq\left(C+\,\frac{C}{r^{2}}\right)\left[1+\lambda^{p}+\,\Vert Du\Vert_{L^{p}(B_{R})}^{p}\,+\Vert f\Vert_{L^{p'}(B_{R})}^{p'}\right]+\,C\,\Vert f\Vert_{B_{p',1}^{\frac{p-2}{p}}(B_{R})}^{p'}\label{eq:estteo1}
\end{align}
for a positive constant $C$ depending only
on $n$, $p$ and $R$.
\end{thm}

\selectlanguage{british}%
\noindent \begin{brem} Looking at (\ref{eq:V0}), one can easily understand
that Theorem \ref{thm:theo1} extends the result
proved in \cite{Irv} to a class of widely degenerate elliptic equations with standard growth, under a sharp assumption on the order of differentiation
of $f$.
\end{brem}\smallskip{}

\selectlanguage{english}%
\noindent $\hspace*{1em}$The proof of the previous theorem is achieved
combining an \textit{a priori} estimate for the solution of a suitable
approximating problem with a comparison estimate. In establishing
the \textit{a priori} estimate, we first need to identify a suitable
function of the gradient that vanishes in the degeneracy set, for
which the second-order \textit{a priori} estimate holds true. Next,
we need to estimate the right-hand side in terms of the derivatives
of such function, without assuming any Sobolev regularity for the
datum $f$. This is done by virtue of the following implication 
\begin{equation}
|g|^{\frac{p-2}{2}}g\in W_{loc}^{1,2}(\Omega)\,\Rightarrow\,g\in B_{p,\infty,loc}^{\frac{2}{p}}(\Omega),\label{imp2}
\end{equation}
which allows us to use the duality of Besov spaces, provided that one imposes
a suitable Besov regularity on the right-hand side $f$. 
{This approach has been inspired by \cite{BS}, where the authors use for the first time a duality-based inequality in the setting of fractional Sobolev spaces, but limiting themselves to the $p$-Poisson equation.} Finally,
we use a comparison argument to transfer the higher differentiability
of the approximating solutions to the solution of equation \eqref{eq:degenerate}.
\noindent \begin{brem} We do not know whether Theorem \ref{thm:theo1}
is still true when we weaken the regularity of $f$ to $B_{p',q,loc}^{(p-2)/p}(\Omega)$
for some $q>1$, also in the case $\lambda=0$. In this regard we
point out the paper \cite{Eb}, where the authors apparently
prove that, for the $p$-Poisson equation, assertion (\ref{eq:Uhlenbeck})
holds under the weaker assumption $f\in W_{loc}^{(p-2)/p,p'}(\Omega)$.
However, we believe that they made a mistake at the beginning of \cite[page 373]{Eb} in applying \cite[Formula (4)]{Eb}.\end{brem}\smallskip{}

\noindent $\hspace*{1em}$We would like to mention that in \cite{CiaMaz18,CiaMaz19}
the authors proved that the assumption $f\in L^{2}$ is sufficient
to prove the $W^{1,2}$-regularity of $\left|Du\right|^{p-2}Du$,
which is of course a different function of the gradient. At the moment,
we do not know whether the analogous result can be obtained for the
solutions of widely degenerate equations.

\noindent $\hspace*{1em}$Now we turn our attention to the sub-quadratic
case, i.e.\ when $1<p\leq2$. It is well known that, already for
the less degenerate case of the $p$-Poisson equation, the higher
differentiability of the solutions can be achieved without assuming
any differentiability on the datum $f$. This different behaviour
can be easily explained by observing that, if $1<p\leq2$, 
\[
|g|^{\frac{p-2}{2}}g\in W_{loc}^{1,2}(\Omega)\,\Rightarrow\,g\in W_{loc}^{1,p}(\Omega),
\]
which of course does not hold true for $p>2$ (compare with \eqref{imp2}).
Therefore, the right-hand side can be estimated without assuming any
differentiability for $f$ (neither of integer nor of fractional order),
but only a suitable degree of integrability. The sharp assumption
on $f$ in the scale of Lebesgue spaces has been recently found in
\cite{CGPdN}.

\noindent $\hspace*{1em}$Here, we prove that {a result analogous
to \cite[Theorem 1.1]{CGPdN} } holds true also when dealing with
solutions of widely {singular} equations. More precisely, we establish
the following result.
\begin{thm}
\label{thm:theo2} Let $n\geq2$, $1<p\leq2$,
$\lambda\geq0$ and $f\in L_{loc}^{{\frac{np}{n(p-1)+2-p}}}(\Omega)$. Moreover, let $u\in W_{loc}^{1,p}(\Omega)$ be a local weak solution
of equation $\mathrm{(\ref{eq:degenerate})}$. Then 
\[
\mathcal{V}_{\lambda}(Du)\,\in\,W_{loc}^{1,2}(\Omega,\mathbb{R}^{n})\,.
\]
Furthermore, for every pair of concentric balls $B_{r}\subset B_{R}\Subset\Omega$
we have
\begin{align}
 & \int_{B_{r/4}}\left|D\mathcal{V}_{\lambda}(Du)\right|^{2}dx\nonumber \\
 & \,\,\,\,\,\,\,\leq\,\frac{C}{r^{2}}\left[1+\lambda^{p}\,+\,\Vert Du\Vert_{L^{p}(B_{R})}^{p}\,+\Vert f\Vert_{L^{\frac{np}{n(p-1)+2-p}}(B_{R})}^{p'}\right]\,+\,C\,\Vert f\Vert_{L^{\frac{np}{n(p-1)+2-p}}(B_{R})}^{p'}\label{eq:estteo2}
\end{align}
for a positive constant $C$ depending only
on $n$, $p$ and $R$.
\end{thm}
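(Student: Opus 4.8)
The strategy parallels that of Theorem \ref{thm:theo1}: we combine an \textit{a priori} estimate for solutions of a regularized problem with a comparison argument, the main difference being that in the sub-quadratic range the right-hand side is controlled purely in Lebesgue scale, with no recourse to Besov duality. First I would fix a ball $B_R \Subset \Omega$ and introduce the family of non-degenerate approximating equations
\[
-\,\mathrm{div}\!\left(\bigl(\sqrt{\varepsilon^2+|Du_\varepsilon|^2}-\lambda\bigr)_{+}^{p-1}\frac{Du_\varepsilon}{\sqrt{\varepsilon^2+|Du_\varepsilon|^2}}\right)=f_\varepsilon \quad\text{in }B_R,
\]
with $u_\varepsilon=u$ on $\partial B_R$ and $f_\varepsilon$ a smooth mollification of $f$, so that $u_\varepsilon\in W^{2,2}_{loc}\cap W^{1,\infty}_{loc}$ by standard regularity for uniformly elliptic equations. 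The point of the $(\,\cdot\,)_+$ kept inside is to preserve the wide degeneracy structure while gaining ellipticity away from $|Du_\varepsilon|\le\lambda$; the function $\mathcal{G}_{\alpha,p}$ with $\alpha\ge\frac{p+1}{2(p-1)}$ is precisely the nonlinear function of the gradient for which the Caccioppoli-type second-order estimate closes.

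The heart is the \textit{a priori} estimate. Differentiating the approximating equation in the direction $x_s$, testing with $\eta^2 \Phi(|Du_\varepsilon|) D_s u_\varepsilon$ for a suitable increasing $\Phi$ adapted to $\mathcal{G}_{\alpha,p}$ and summing over $s$, one obtains on the left a term comparable to $\int \eta^2 |D\mathcal{G}_{\alpha,p}((|Du_\varepsilon|-\lambda)_+)|^2$ plus a term involving $D\eta$, and on the right a term $\int \eta^2 f_\varepsilon \,\mathrm{div}(\Phi(|Du_\varepsilon|)Du_\varepsilon)$ after an integration by parts (plus lower-order $D\eta$ contributions). Since $1<p\le2$, the factor $(|Du_\varepsilon|-\lambda)_+^{p-2}$ appearing in the ellipticity constant works in our favour: writing $\mathrm{div}(\Phi(|Du_\varepsilon|)Du_\varepsilon)$ in terms of $D^2u_\varepsilon$ and bounding by $|D\mathcal{G}_{\alpha,p}((|Du_\varepsilon|-\lambda)_+)|$ times a weight, one estimates the right-hand side by
\[
C\int \eta^2 |f_\varepsilon|\,(|Du_\varepsilon|-\lambda)_+^{\frac{2-p}{2}}\,|D\mathcal{G}_{\alpha,p}((|Du_\varepsilon|-\lambda)_+)| + \text{(}D\eta\text{ terms)},
\]
and absorbs the gradient factor by Young's inequality, leaving $\int\eta^2 |f_\varepsilon|^2 (|Du_\varepsilon|-\lambda)_+^{2-p}$. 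This last term is handled by Hölder with exponents tied to $\frac{np}{n(p-1)+2-p}$ together with a Sobolev inequality applied to $\mathcal{G}_{\alpha,p}((|Du_\varepsilon|-\lambda)_+)$ — exactly as in \cite{CGPdN} — which reinserts a small power of $\int|D\mathcal{G}_{\alpha,p}|^2$ that is again absorbed; the interplay of exponents is such that the integrability threshold $\frac{np}{n(p-1)+2-p}$ is exactly what makes the absorption legitimate. An iteration/hole-filling lemma then yields the $\varepsilon$-uniform bound of the form \eqref{eq:estteo2} on $B_{r/4}$ in terms of $\|Du_\varepsilon\|_{L^p(B_R)}$, $\lambda$ and $\|f_\varepsilon\|_{L^{\frac{np}{n(p-1)+2-p}}(B_R)}$.

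Finally, the comparison step: one shows $Du_\varepsilon\to Du$ in $L^p_{loc}$ (using monotonicity of the operator, the uniform energy bound, and $f_\varepsilon\to f$ in $L^{p'}$), and that the uniform bound on $\mathcal{G}_{\alpha,p}((|Du_\varepsilon|-\lambda)_+)$ in $W^{1,2}_{loc}$ passes to the limit by weak lower semicontinuity, identifying the weak limit with $\mathcal{G}_{\alpha,p}((|Du|-\lambda)_+)$ via a.e.\ convergence of the gradients along a subsequence. This gives both \eqref{eq:asserto} and the estimate \eqref{eq:estteo2} with constants independent of $\varepsilon$.

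The main obstacle I anticipate is the bookkeeping in the \textit{a priori} estimate: choosing $\Phi$ so that the left-hand side is genuinely comparable to $|D\mathcal{G}_{\alpha,p}((|Du_\varepsilon|-\lambda)_+)|^2$ while keeping every weight on the right expressible as a product of $(|Du_\varepsilon|-\lambda)_+$ powers compatible with the Sobolev-plus-Hölder absorption, and verifying that the condition $\alpha\ge\frac{p+1}{2(p-1)}$ is exactly what is needed for all the exponents to balance (in particular for the degenerate weight near $|Du_\varepsilon|=\lambda$ not to destroy the Caccioppoli inequality). A secondary technical point is the uniform-in-$\varepsilon$ control needed to justify passing to the limit, especially handling the region where $\sqrt{\varepsilon^2+|Du_\varepsilon|^2}$ and $|Du_\varepsilon|$ differ near the degeneracy radius $\lambda$.
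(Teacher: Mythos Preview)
Your overall strategy (regularize, derive an \textit{a priori} estimate, pass to the limit via comparison) is the paper's. There are, however, two concrete problems in the execution.

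First, your regularization does not do what you claim. The operator $z\mapsto (\sqrt{\varepsilon^2+|z|^2}-\lambda)_+^{p-1}\dfrac{z}{\sqrt{\varepsilon^2+|z|^2}}$ still vanishes identically on the ball $\{|z|\le\sqrt{\lambda^2-\varepsilon^2}\}$ (for $\varepsilon<\lambda$), so the approximating equation remains widely degenerate and there is no reason for $u_\varepsilon\in W^{2,2}_{loc}$ by ``standard regularity for uniformly elliptic equations''. The paper instead takes $G_\varepsilon(z)=\frac{1}{p}(|z|-\lambda)_+^p+\frac{\varepsilon}{p}(1+|z|^2)^{p/2}$, i.e.\ it \emph{adds} a nondegenerate $p$-Laplace term; this makes the approximating problem genuinely elliptic everywhere and yields the needed $W^{2,2}_{loc}$ regularity of $u_\varepsilon$.

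Second, the pointwise bound you invoke for the right-hand side is in the wrong direction. The quantity $\mathrm{div}(\Phi(|Du_\varepsilon|)Du_\varepsilon)$ contains $\Phi\,\Delta u_\varepsilon$, hence the full Hessian $D^2u_\varepsilon$, whereas $|D\mathcal{G}_{\alpha,p}((|Du_\varepsilon|-\lambda)_+)|$ is controlled only by $\mathcal{G}_{\alpha,p}'\cdot\big|D|Du_\varepsilon|\big|$; since $\big|D|Du_\varepsilon|\big|\le|D^2u_\varepsilon|$ and not conversely, you cannot bound the former by a weight times the latter. What the Caccioppoli inequality actually gives on the left is a weighted $L^2$-norm of the \emph{full} Hessian (the integrand $\frac{(|Du_\varepsilon|-\lambda)_+^{p-1+2\alpha}}{|Du_\varepsilon|[\lambda^2+(|Du_\varepsilon|-\lambda)_+^2]^\alpha}|D^2u_\varepsilon|^2$), and it is into \emph{that} that one must absorb. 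The paper accordingly does not take your pointwise-Young-then-Sobolev-on-$\mathcal{G}$ route: after integrating by parts it applies H\"older with exponents $\frac{np}{n(p-1)+2-p}$ and $\frac{np}{n-2+p}$, and then proves, via Lemma~\ref{D1} and Sobolev embedding applied to $|Z|^{(p-2)/2}Z$ with $Z=\eta^2(\partial_ju_\varepsilon)\Phi$, the interpolation-type bound
\[
\Vert\partial_jZ\Vert_{L^{\frac{np}{n-2+p}}}\le c\left(\int\big|D\big(|Z|^{\frac{p-2}{2}}Z\big)\big|^2\right)^{\frac{1}{p}}.
\]
The quantity $|D(|Z|^{(p-2)/2}Z)|^2$ is then controlled pointwise by the weighted-Hessian integrand already on the left (same $\mathbf A_1,\mathbf A_2$ estimates as in the super-quadratic case), and a single Young inequality closes the estimate. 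This route also sidesteps the difficulty that $\mathcal{G}_{\alpha,p}(t)\not\approx t^{p/2}$ for small $t$, which your ``Sobolev on $\mathcal{G}_{\alpha,p}$'' step would have to confront explicitly.
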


\noindent $\hspace*{1em}$As an easy consequence of the higher differentiability
results in Theorems \ref{thm:theo1} and \ref{thm:theo2}, since the
gradient of the solution is bounded in the region $\{\vert Du\vert\leq\lambda\}$ and
$\mathcal{G}_{\lambda}(t)\approx t^{p/2}$ for {large values
of $t$} (see Lemma \ref{growth} below), we are able to establish
{the following} higher integrability result for the gradient of
local weak solutions of \eqref{eq:degenerate}:
\begin{cor}
\noindent \label{cor:corollario3} Under the assumptions of Theorem
\ref{thm:theo1} or Theorem \ref{thm:theo2}, we have
\[
Du\in L_{loc}^{q}(\Omega,\mathbb{R}^{n}),
\]
where 
\begin{align*}
q=\begin{cases}
\begin{array}{cc}
\text{any value in \ensuremath{[1,\infty)}} & \text{if}\,\,\,n=2,\,\,\,\,\,\,\,\,\,\,\,\\
\frac{np}{n-2} & \text{if}\,\,\,n\ge3.\,\,\,\,\,\,\,\,\,\,
\end{array}\end{cases}
\end{align*}
\smallskip{}
\end{cor}

\noindent $\hspace*{1em}$Before describing the structure of this paper, we observe that, if we interpret the inverse of the ratio in \eqref{eq:ellratio} as a weight, the ellipticity bounds of the matrix $D_{\xi}H_{p-1}(Du)$ can be expressed as
\begin{align*}
\frac{c(p)}{\mathcal{R}(Du)}\,(\vert Du\vert-\lambda)^{p-2}_{+}\,|\zeta|^{2}\le\langle D_{\xi}H_{p-1}(Du)\,\zeta,\zeta\rangle\le\,c(p)\,(|Du|-\lambda)_{+}^{p-2}\,|\zeta|^{2},
\end{align*}
for every $\zeta\in \mathbb{R}^n$ (see Lemma \ref{lem:qform} with $\varepsilon=0$). Known results for solutions to non-uniformly elliptic problems rely on integrability properties of $\mathcal{R}(Du)$, which, however, are not available in the present setting. Our choice of the function 
$\mathcal{V}_\lambda$ implicitly incorporates the aforementioned weight.
\\
\noindent $\hspace*{1em}$The paper is organized as follows. Section
\ref{sec:prelim} is devoted to the preliminaries: after a list of
some classic notations and some essentials estimates, we recall the
basic properties of the difference quotients of Sobolev functions.
Section \ref{sec:Besov spaces} is entirely devoted to the definitions
and properties of Besov spaces that will be useful to prove our results.
In Section \ref{sec:a priori}, we establish some \textit{a priori}
estimates that will be needed to demonstrate Theorems \ref{thm:theo1}
and \ref{thm:theo2}, whose proofs are contained in Sections \ref{sec:theo1}
and \ref{sec:teorema2}, respectively.

\section{Preliminaries \label{sec:prelim}}
\selectlanguage{british}%

\subsection{Notation and essential definitions }

\selectlanguage{english}%
\noindent $\hspace*{1em}$In this paper we shall denote by $C$ or
$c$ a general positive constant that may vary on different occasions.
Relevant dependencies on parameters and special constants will be
suitably emphasized using parentheses or subscripts. \foreignlanguage{british}{The
norm we use on $\mathbb{R}^{k}$, }\foreignlanguage{american}{$k\in\mathbb{N}$}\foreignlanguage{british}{,
will be the standard Euclidean one and it will be denoted by $\left|\,\cdot\,\right|$.
In particular, for the vectors $\xi,\eta\in\mathbb{R}^{k}$, we write
$\langle\xi,\eta\rangle$ for the usual inner product and $\left|\xi\right|:=\langle\xi,\xi\rangle^{\frac{1}{2}}$
for the corresponding Euclidean norm.}\\
$\hspace*{1em}$In what follows, $B_{r}(x_{0})=\left\{ x\in\mathbb{R}^{n}:\left|x-x_{0}\right|<r\right\} $
will denote the $n$-dimensional open ball centered at $x_{0}$ with
radius $r$. We shall sometimes omit the dependence on the center
when all balls occurring in a proof are concentric. Unless otherwise
stated, different balls in the same context will have the same center.\\
\foreignlanguage{british}{$\hspace*{1em}$For further needs, we now
define the auxiliary function $H_{\gamma}:\mathbb{R}^{n}\rightarrow\mathbb{R}^{n}$
by 
\begin{equation}
H_{\gamma}(\xi):=\begin{cases}
\begin{array}{cc}
(\vert\xi\vert-\lambda)_{+}^{\gamma}\,\frac{\xi}{\left|\xi\right|} & \,\,\mathrm{if}\,\,\,\xi\neq0,\\
0 & \,\,\mathrm{if}\,\,\,\xi=0,
\end{array}\end{cases}\label{eq:Hfun}
\end{equation}
}

\selectlanguage{british}%
\noindent where $\lambda\geq0$ and $\gamma>0$ are parameters. We
conclude this first part of the preliminaries by recalling the following
definition.
\begin{defn}
\noindent Let $\lambda\geq0$. A function $u\in W_{loc}^{1,p}(\Omega)$
is a \textit{local weak solution} of equation (\ref{eq:degenerate})
if and only if, for any test function $\varphi\in C_{0}^{\infty}(\Omega)$,
the following integral identity holds:
\[
\int_{\Omega}\langle H_{p-1}(Du),D\varphi\rangle\,dx\,=\,\int_{\Omega}f\varphi\,dx.
\]
\end{defn}

\subsection{Algebraic inequalities }

\noindent $\hspace*{1em}$In this section, we gather some relevant
algebraic inequalities that will be needed later on. The first result
follows from an elementary computation.
\begin{lem}
\noindent \label{lem:alge1} For $\xi,\eta\in\mathbb{R}^{n}\setminus\{0\}$,
we have 
\[
\left|\frac{\xi}{\vert\xi\vert}-\frac{\eta}{\vert\eta\vert}\right|\leq\mathrm{\frac{2}{\vert\eta\vert}\,\vert\xi-\eta\vert.}
\]
\end{lem}

\noindent We now recall the following estimate, whose proof can be
found in \cite[Chapter 12]{Lind}.
\begin{lem}
\noindent \label{lem:Lind} Let $p\in(2,\infty)$ and $k\in\mathbb{N}$.
Then, for every $\xi,\eta\in\mathbb{R}^{k}$ we get 
\[
\vert\xi-\eta\vert^{p}\leq\,C\left|\vert\xi\vert^{\frac{p-2}{2}}\xi-\vert\eta\vert^{\frac{p-2}{2}}\eta\right|^{2}
\]
for a constant $C\equiv C(p)>0$.
\end{lem}

\selectlanguage{english}%
\noindent Combining \cite[Lemma 2.2]{AceFu} with \cite[Formula (2.4)]{GiaMo},
we obtain the following
\begin{lem}
\label{D1} Let $1<p<\infty$. There exists a constant $c\equiv c(n,p)>0$
such that 
\begin{center}
$c^{-1}(|\xi|^{2}+|\eta|^{2})^{\frac{p-2}{2}}\leq\dfrac{\left|\vert\xi\vert^{\frac{p-2}{2}}\xi-\vert\eta\vert^{\frac{p-2}{2}}\eta\right|^{2}}{|\xi-\eta|^{2}}\leq c\,(|\xi|^{2}+|\eta|^{2})^{\frac{p-2}{2}}$ 
\par\end{center}
\noindent for every $\xi,\eta\in\mathbb{R}^{n}$ with $\xi\neq\eta$. 
\end{lem}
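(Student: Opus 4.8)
The plan is to reduce the two-sided bound to an elementary one–dimensional integral estimate, by writing the vector field $H(\zeta):=|\zeta|^{\frac{p-2}{2}}\zeta$ as the integral of its differential along the segment joining $\eta$ to $\xi$. First I would compute, for $\zeta\in\mathbb{R}^{n}\setminus\{0\}$,
\[
DH(\zeta)=|\zeta|^{\frac{p-2}{2}}\left(\mathrm{Id}+\frac{p-2}{2}\,\frac{\zeta\otimes\zeta}{|\zeta|^{2}}\right),
\]
a symmetric matrix whose eigenvalues are $|\zeta|^{\frac{p-2}{2}}$ (with multiplicity $n-1$, on $\zeta^{\perp}$) and $\tfrac{p}{2}|\zeta|^{\frac{p-2}{2}}$ (in the direction of $\zeta$). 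Since $p>1$, both are positive, so, setting $m_{p}:=\min\{1,p/2\}$ and $M_{p}:=\max\{1,p/2\}$,
\[
m_{p}\,|\zeta|^{\frac{p-2}{2}}|v|^{2}\;\le\;\langle DH(\zeta)v,v\rangle\;\le\;M_{p}\,|\zeta|^{\frac{p-2}{2}}|v|^{2}\qquad\text{for every }v\in\mathbb{R}^{n}.
\]

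Next, write $\zeta_{t}:=\eta+t(\xi-\eta)$ for $t\in[0,1]$. Since $p>1$ gives $\tfrac{p-2}{2}>-1$, the map $t\mapsto|\zeta_{t}|^{\frac{p-2}{2}}$ is integrable on $(0,1)$ even when the segment meets the origin, so $H$ is absolutely continuous along the segment and
\[
H(\xi)-H(\eta)=\left(\int_{0}^{1}DH(\zeta_{t})\,dt\right)(\xi-\eta).
\]
Estimating the norm of the right–hand side gives the upper bound $|H(\xi)-H(\eta)|\le M_{p}\,|\xi-\eta|\int_{0}^{1}|\zeta_{t}|^{\frac{p-2}{2}}\,dt$; pairing the same identity with $(\xi-\eta)/|\xi-\eta|$ and using the lower ellipticity bound above gives $|H(\xi)-H(\eta)|\ge m_{p}\,|\xi-\eta|\int_{0}^{1}|\zeta_{t}|^{\frac{p-2}{2}}\,dt$. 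Hence the ratio in the statement is comparable, with constants depending only on $p$, to $\big(\int_{0}^{1}|\zeta_{t}|^{\frac{p-2}{2}}\,dt\big)^{2}$.

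It remains to use the elementary estimate — this is precisely the content of \cite[Formula (2.4)]{GiaMo} combined with \cite[Lemma 2.2]{AceFu} — that for every $s>-1$ there is $c=c(s)>0$ such that
\[
c^{-1}\big(|\xi|^{2}+|\eta|^{2}\big)^{\frac{s}{2}}\;\le\;\int_{0}^{1}\big|\eta+t(\xi-\eta)\big|^{s}\,dt\;\le\;c\,\big(|\xi|^{2}+|\eta|^{2}\big)^{\frac{s}{2}}.
\]
Applying this with $s=\tfrac{p-2}{2}$ (admissible since $p>1$) and squaring yields the claim, with a constant depending only on $p$, hence in particular on $n$ and $p$ as stated. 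The only delicate point is the subquadratic range $1<p<2$, where $s=\tfrac{p-2}{2}<0$: there the integrand $|\zeta_{t}|^{\frac{p-2}{2}}$ is unbounded along segments passing near the origin, so the crude bound $|\zeta_{t}|\le\max\{|\xi|,|\eta|\}$ — which settles $p\ge2$ at once — is of no help, and one genuinely needs the scaling–sharp integral estimate, whose proof distinguishes the case $|\xi-\eta|\gtrsim|\xi|+|\eta|$ from $|\xi-\eta|\ll|\xi|+|\eta|$, the latter handled by a change of variables that exploits $s>-1$ to keep the integral finite.
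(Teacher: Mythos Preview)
Your argument is correct and is exactly the standard derivation underlying the paper's treatment: the paper does not give its own proof of this lemma but merely attributes it to the combination of \cite[Lemma~2.2]{AceFu} and \cite[Formula~(2.4)]{GiaMo}, and your write-up is precisely how those two ingredients fit together---linearize $\zeta\mapsto|\zeta|^{(p-2)/2}\zeta$ along the segment, use the explicit eigenvalues of the (symmetric) Jacobian to get the two-sided bound in terms of $\int_0^1|\zeta_t|^{(p-2)/2}\,dt$, and then apply the integral comparison $\int_0^1|\eta+t(\xi-\eta)|^{s}\,dt\approx(|\xi|^2+|\eta|^2)^{s/2}$ with $s=\tfrac{p-2}{2}>-1$. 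The pairing trick for the lower bound (testing against $(\xi-\eta)/|\xi-\eta|$ and using symmetry of $DH$) is the right way to avoid losing the constant, and your remark that $s>-1$ keeps the integral finite even when the segment meets the origin is the only point that needs care for $1<p<2$.
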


\selectlanguage{british}%
\noindent $\hspace*{1em}$\foreignlanguage{english}{For the function
$H_{p-1}$ defined by (\ref{eq:Hfun}) with $\gamma=p-1$, we record
the following estimates, }which can be obtained by suitably modifying
the proofs of\foreignlanguage{english}{ \cite[Lemma 4.1]{BraCaSan}
(for the case $p\ge2$), \cite[Lemma 2.5]{Am} (for the case $1<p<2$)
and} \cite[Lemma 2.8]{BoDuGiPa}. 
\selectlanguage{english}%
\begin{lem}
\label{lem:Brasco} Let \foreignlanguage{british}{$p\in(1,\infty)$
and $\lambda\geq0$}. Then, there exists a constant $c\equiv c(n,p)>0$
such that 
\begin{equation}
\langle H_{p-1}(\xi)-H_{p-1}(\eta),\xi-\eta\rangle\,\geq\,c\,\vert H_{\frac{p}{2}}(\xi)-H_{\frac{p}{2}}(\eta)\vert^{2},\label{eq:BraAmb}
\end{equation}
for every $\xi,\eta\in\mathbb{R}^{n}$. \foreignlanguage{british}{Moreover,
if $\vert\eta\vert>\lambda>0$ we have 
\[
\langle H_{p-1}(\xi)-H_{p-1}(\eta),\xi-\eta\rangle\,\geq\,\frac{\min\,\{1,p-1\}}{2^{p+1}}\,\,\frac{(\vert\eta\vert-\lambda)^{p}}{\vert\eta\vert\,(\vert\xi\vert+\vert\eta\vert)}\,\vert\xi-\eta\vert^{2}.
\]
}
\end{lem}

\noindent $\hspace*{1em}$The next result concerns the function $\mathcal{G}_{\lambda}$
defined by (\ref{eq:Gfun}).
\begin{lem}
\noindent \label{lem:Glemma1} Let $p\in(1,\infty)$ and  $\lambda\geq0$. Then 
\[
\mathcal{G}_{\lambda}(t)\,\le\,\frac{2}{p}\,t^{\frac{p}{2}}\left(\frac{t}{t+\lambda}\right)^{\frac{p}{p-1}}
\]
for every $t>0$.
\end{lem}

\noindent \begin{proof}[\bfseries{Proof}] Since the function 
\[
K(\omega):=\left(\frac{\omega}{\omega+\lambda}\right)^{\frac{p}{p-1}}\,,\,\,\,\,\,\,\,\,\,\,\omega>0,
\]
is non-decreasing, for every $t>0$ we have 
\[
\mathcal{G}_{\lambda}(t)=\int_{0}^{t}K(\omega)\,\omega^{\frac{p}{2}-1}\,d\omega\,\le\,\left(\frac{t}{t+\lambda}\right)^{\frac{p}{p-1}}\int_{0}^{t}\omega^{\frac{p}{2}-1}\,d\omega\,=\,\frac{2}{p}\,t^{\frac{p}{2}}\left(\frac{t}{t+\lambda}\right)^{\frac{p}{p-1}}\,.
\qedhere
\]
\end{proof}

\noindent The next lemma relates the function $\mathcal{V}_{\lambda}(\xi)$
with $H_{p-1}(\xi)$.
\begin{lem}
\label{lem:vital} Let $p\in(1,\infty)$ and $\lambda\geq0$.
Then, there exists a constant $C\equiv C(n,p)>0$ such that 
\begin{equation}
\vert\mathcal{V}_{\lambda}(\xi)-\mathcal{V}_{\lambda}(\eta)\vert^{2}\,\leq\,C\,\langle H_{p-1}(\xi)-H_{p-1}(\eta),\xi-\eta\rangle\label{eq:VH}
\end{equation}
for every $\xi,\eta\in\mathbb{R}^{n}$. 
\end{lem}

\noindent \begin{proof}[\bfseries{Proof}] For $\lambda=0$, estimate
(\ref{eq:VH}) boils down to (\ref{eq:BraAmb}). Therefore, from now
on we shall assume that $\lambda>0$. We first note that inequality
(\ref{eq:VH}) is trivially satisfied when $\vert\xi\vert,\vert\eta\vert\leq\lambda$.
If $\vert\eta\vert\leq\lambda<\vert\xi\vert$, using the definitions
(\ref{eq:Vfun}), (\ref{eq:Gfun}), (\ref{eq:Hfun}) and Lemma \ref{lem:Brasco},
we obtain\begin{align*}
\vert\mathcal{V}_{\lambda}(\xi)-\mathcal{V}_{\lambda}(\eta)\vert^{2}\,&=\,[\mathcal{G}_{\lambda}(\vert\xi\vert-\lambda)]^{2}\,\le\left(\int_{0}^{\vert\xi\vert-\lambda}\omega^{\frac{p}{2}-1}\,d\omega\right)^{2}=\,\frac{4}{p^{2}}\,(\vert\xi\vert-\lambda)^{p}\\
&=\,\frac{4}{p^{2}}\,\vert H_{\frac{p}{2}}(\xi)-H_{\frac{p}{2}}(\eta)\vert^{2}\,\le\,c(n,p)\,\langle H_{p-1}(\xi)-H_{p-1}(\eta),\xi-\eta\rangle.
\end{align*}Now let $\vert\xi\vert,\vert\eta\vert>\lambda$. Without loss of generality,
we may assume that $\vert\eta\vert\geq\vert\xi\vert>\lambda$. This
implies 
\begin{equation}
\vert\eta\vert^{2}=\,\frac{\vert\eta\vert\,(\vert\eta\vert+\vert\eta\vert)}{2}\,\geq\,\frac{\vert\eta\vert\,(\vert\xi\vert+\vert\eta\vert)}{2}\,.\label{eq:implication}
\end{equation}
Moreover, we have\begin{align*}
\vert\mathcal{V}_{\lambda}(\xi)-\mathcal{V}_{\lambda}(\eta)\vert\,&=\,\left|\mathcal{V}_{\lambda}(\xi)-\mathcal{G}_{\lambda}(\vert\eta\vert-\lambda)\,\frac{\xi}{\left|\xi\right|}\,+\mathcal{G}_{\lambda}(\vert\eta\vert-\lambda)\,\frac{\xi}{\left|\xi\right|}\,-\mathcal{V}_{\lambda}(\eta)\right|\\
&\leq\,\left|\mathcal{G}_{\lambda}(\vert\xi\vert-\lambda)-\mathcal{G}_{\lambda}(\vert\eta\vert-\lambda)\right|\,+\,\mathcal{G}_{\lambda}(\vert\eta\vert-\lambda)\left|\frac{\xi}{\left|\xi\right|}-\frac{\eta}{\left|\eta\right|}\right|\\
& \le\,\int_{\vert\xi\vert-\lambda}^{\vert\eta\vert-\lambda}\frac{\omega^{\frac{p}{2}+\frac{1}{p-1}}}{(\omega+\lambda)^{1+\frac{1}{p-1}}}\,d\omega\,+\,\frac{2}{\vert\eta\vert}\,\vert\xi-\eta\vert\int_{0}^{\vert\eta\vert-\lambda}\frac{\omega^{\frac{p}{2}+\frac{1}{p-1}}}{(\omega+\lambda)^{1+\frac{1}{p-1}}}\,d\omega\\
&\le\,\int_{\vert\xi\vert-\lambda}^{\vert\eta\vert-\lambda}\omega^{\frac{p}{2}-1}\,d\omega\,+\,\frac{2}{\vert\eta\vert}\,\vert\xi-\eta\vert\int_{0}^{\vert\eta\vert-\lambda}\omega^{\frac{p}{2}-1}\,d\omega\\
&=\,\frac{2}{p}\,\left|\vert H_{\frac{p}{2}}(\eta)\vert - \vert H_{\frac{p}{2}}(\xi)\vert\right|+\,4\,\frac{(\vert\eta\vert-\lambda)^{\frac{p}{2}}}{p\,\vert\eta\vert}\,\vert\xi-\eta\vert\\
&\le\,\frac{2}{p}\,\vert H_{\frac{p}{2}}(\xi)-H_{\frac{p}{2}}(\eta)\vert\,+\,4\,\frac{(\vert\eta\vert-\lambda)^{\frac{p}{2}}}{p\,\vert\eta\vert}\,\vert\xi-\eta\vert,
\end{align*}where, in the third line, we have used Lemma \ref{lem:alge1} and
the fact that $\mathcal{G}_{\lambda}$ is an increasing function.
Now, applying Young's inequality, estimate (\ref{eq:implication})
and Lemma \ref{lem:Brasco}, we obtain\begin{align*}
\vert\mathcal{V}_{\lambda}(\xi)-\mathcal{V}_{\lambda}(\eta)\vert^{2}\,&\le\,\frac{8}{p^{2}}\,\vert H_{\frac{p}{2}}(\xi)-H_{\frac{p}{2}}(\eta)\vert^{2}\,+\,32\,\frac{(\vert\eta\vert-\lambda)^{p}}{p^{2}\,\vert\eta\vert^{2}}\,\vert\xi-\eta\vert^{2}\\
&\le\,\frac{8}{p^{2}}\,\vert H_{\frac{p}{2}}(\xi)-H_{\frac{p}{2}}(\eta)\vert^{2}\,+\,\frac{64}{p^{2}}\,\frac{(\vert\eta\vert-\lambda)^{p}}{\vert\eta\vert\,(\vert\xi\vert+\vert\eta\vert)}\,\vert\xi-\eta\vert^{2}\\
&\leq\,C(n,p)\,\langle H_{p-1}(\xi)-H_{p-1}(\eta),\xi-\eta\rangle.
\end{align*}This completes the proof.\end{proof}

\noindent $\hspace*{1em}$We conclude this
section with the proof of the following lemma, on which the conclusion
of Corollary \ref{cor:corollario3} is based.
\begin{lem}
\label{growth} Let $p\in(1,\infty)$ and $\lambda>0$.
Then, there exist two positive constants $c\equiv c(p)$ and
$\tilde{c}\equiv\tilde{c}(p)$ such that 
\[
c\,(t+\lambda)^{p/2}-\tilde{c}\,\lambda^{p/2}\le\,\mathcal{G}_{\lambda}(t)\,\le\dfrac{2}{p}\,t^{p/2}
\]
for all $t\ge0$.
\end{lem}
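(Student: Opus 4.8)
The plan is to treat the two inequalities separately: the upper bound is essentially immediate, while the lower bound calls for a short case analysis depending on whether $t$ lies above or below $\lambda$.

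For the upper bound I would simply observe that $\omega+\lambda\ge\omega$ for every $\omega\ge 0$, so that the integrand in the definition of $\mathcal{G}_{\alpha,p}$ obeys $\omega^{\frac{p-1+2\alpha}{2}}(\omega+\lambda)^{-\frac{1+2\alpha}{2}}\le\omega^{\frac{p-2}{2}}$; integrating over $(0,t)$ and using $p>1$ (so that the primitive is $\tfrac{2}{p}\omega^{p/2}$) gives $\mathcal{G}_{\alpha,p}(t)\le\tfrac{2}{p}\,t^{p/2}$ at once.

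For the lower bound I would first assume $t\ge\lambda$. On the interval $[\lambda,t]$ one has $\omega+\lambda\le 2\omega$, hence $(\omega+\lambda)^{-\frac{1+2\alpha}{2}}\ge 2^{-\frac{1+2\alpha}{2}}\omega^{-\frac{1+2\alpha}{2}}$ and the integrand is bounded below by $2^{-\frac{1+2\alpha}{2}}\omega^{\frac{p-2}{2}}$. Discarding the nonnegative contribution of $(0,\lambda)$ and integrating over $[\lambda,t]$ yields $\mathcal{G}_{\alpha,p}(t)\ge \tfrac{2}{p}\,2^{-\frac{1+2\alpha}{2}}\big(t^{p/2}-\lambda^{p/2}\big)$. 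Since $t\ge\lambda$ forces $\lambda+t\le 2t$, i.e. $t^{p/2}\ge 2^{-p/2}(\lambda+t)^{p/2}$, this gives the assertion with $c:=\tfrac{2}{p}\,2^{-\frac{1+2\alpha+p}{2}}$ and $\tilde c:=\tfrac{2}{p}\,2^{-\frac{1+2\alpha}{2}}=2^{p/2}c$.

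It then remains to check the range $0\le t<\lambda$, where $\mathcal{G}_{\alpha,p}(t)\ge 0$ trivially. Here $\lambda+t<2\lambda$, so $c(\lambda+t)^{p/2}-\tilde c\,\lambda^{p/2}\le\big(2^{p/2}c-\tilde c\big)\lambda^{p/2}=0\le\mathcal{G}_{\alpha,p}(t)$ with the very same constants. I do not expect any genuine obstacle; the only point requiring a little care is precisely this bookkeeping, namely producing a single pair $(c,\tilde c)$ that works in both regimes, and the computation above shows the natural choice $\tilde c=2^{p/2}c$ does the job. (I note in passing that $\alpha\ge 0$, rather than $\alpha>\tfrac12$, already suffices for this particular statement.)
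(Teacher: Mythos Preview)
Your argument is correct. The upper bound is handled exactly as in the paper. For the lower bound, however, the paper proceeds differently: it performs the change of variable $r=\omega+\lambda$, applies the elementary inequality $(r-\lambda)^{\gamma}\ge 2^{-\gamma}r^{\gamma}-\lambda^{\gamma}$ with $\gamma=\tfrac{p-1+2\alpha}{2}$, and then evaluates the two resulting integrals explicitly; the assumption $\alpha>\tfrac12$ enters precisely when bounding $\int_{\lambda}^{\lambda+t}\lambda^{\gamma}r^{-\frac{1+2\alpha}{2}}\,dr$ by a constant times $\lambda^{p/2}$. Your route---splitting into $t\ge\lambda$ and $t<\lambda$, and in the first regime bounding the integrand on $[\lambda,t]$ via $\omega+\lambda\le 2\omega$---is more elementary (no change of variable, no auxiliary inequality) and, as you correctly observe, does not use $\alpha>\tfrac12$ at all; it produces explicit constants with the convenient relation $\tilde c=2^{p/2}c$, which is exactly what makes the $t<\lambda$ case automatic. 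The paper's approach has the minor advantage of avoiding a case distinction and giving a single computation valid for all $t\ge 0$, at the cost of the restriction $\alpha>\tfrac12$.
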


\begin{proof}[\textbf{{Proof}}]
From the very definition of the function $\mathcal{G}_{\lambda}$,
we easily get the upper bound 
\[
\mathcal{G}_{\lambda}(t)\le\int_{0}^{t}\omega^{\frac{p-2}{2}}d\omega=\dfrac{2}{p}\,t^{p/2}\,\,\,\,\,\,\,\,\mathrm{for\,\,all\,\,}t\geq0.
\]
For the derivation of the lower bound, we write the integral that defines $\mathcal{G}_{\lambda}(t)$
 as follows:
\begin{align}\label{est:lem1}
\int_{0}^{t}\frac{\omega^{\frac{p}{2}+\frac{1}{p-1}}}{(\omega+\lambda)^{1+\frac{1}{p-1}}}\,d\omega\,=\int_{0}^{t}\frac{(\omega+\lambda-\lambda)^{\frac{p}{2}+\frac{1}{p-1}}}{(\omega+\lambda)^{1+\frac{1}{p-1}}}\,d\omega\,.
\end{align}
Now we recall that for every $\gamma>0$ it holds 
\begin{equation}
2^{-\gamma}a^{\gamma}-b^{\gamma}\le(a-b)^{\gamma},\quad\forall\,a\ge b\ge0\,.\label{ineq}
\end{equation}
Using inequality \eqref{ineq} with $\gamma=\frac{p}{2}+\frac{1}{p-1}$,
 $a=\omega+\lambda$ and $b=\lambda$, we find that 
\begin{align}\label{est:lem2}
\int_{0}^{t}\frac{(\omega+\lambda-\lambda)^{\frac{p}{2}+\frac{1}{p-1}}}{(\omega+\lambda)^{1+\frac{1}{p-1}}}\,d\omega\,&\ge\,2^{\frac{1}{1-p}-\frac{p}{2}}\int_0^t(\omega+\lambda)^{\frac{p}{2}-1}\,d\omega\,-\,\lambda^{\frac{p}{2}+\frac{1}{p-1}}\int_{0}^{t}\dfrac{1}{(\omega+\lambda)^{1+\frac{1}{p-1}}}\,d\omega\nonumber\\
 & =\,c(p)\,(t+\lambda)^{\frac{p}{2}}\,-\,c(p)\,\lambda^{\frac{p}{2}}\,+(p-1)\,\lambda^{\frac{p}{2}+\frac{1}{p-1}}\left[(\omega+\lambda)^{\frac{1}{1-p}}\right]_0^t\nonumber\\
  &\ge\,c(p)\,(t+\lambda)^{\frac{p}{2}}\,-\,c(p)\,\lambda^{\frac{p}{2}}\,-\,(p-1)\,\lambda^{\frac{p}{2}}\,.
\end{align}
Joining \eqref{est:lem1} and \eqref{est:lem2}, we
obtain the asserted lower bound.
\end{proof}

\subsection{Difference quotients}

\label{subsec:DiffOpe}

\noindent $\hspace*{1em}$We recall here the definition and some elementary
properties of the difference quotients that will be useful in the
following (see, for example, \cite{Giu}). 
\begin{defn}
\noindent For every vector-valued function $F:\mathbb{R}^{n}\rightarrow\mathbb{R}^{k}$
the \textit{finite difference operator }in the direction $x_{j}$
is defined by 
\[
\tau_{j,h}F(x)=F(x+he_{j})-F(x),
\]
where $h\in\mathbb{R}$, $e_{j}$ is the unit vector in the direction
$x_{j}$ and $j\in\{1,\ldots,n\}$.\\
 $\hspace*{1em}$The \textit{difference quotient} of $F$ with respect
to $x_{j}$ is defined for $h\in\mathbb{R}\setminus\{0\}$ by 
\[
\Delta_{j,h}F(x)\,=\,\frac{\tau_{j,h}F(x)}{h}\,.
\]
\end{defn}

\noindent When no confusion can arise, we shall omit the index $j$
and simply write $\tau_{h}$ or $\Delta_{h}$ instead of $\tau_{j,h}$
or $\Delta_{j,h}$, respectively. 
\selectlanguage{british}%
\begin{prop}
\noindent Let $\Omega\subset\mathbb{R}^{n}$ be an open set and let
$F\in W^{1,q}(\Omega)$, with $q\geq1$. Moreover, let $G:\Omega\rightarrow\mathbb{R}$
be a measurable function and consider the set
\[
\Omega_{\vert h\vert}:=\left\{ x\in\Omega:\mathrm{dist}\left(x,\partial\Omega\right)>\vert h\vert\right\} .
\]
\foreignlanguage{english}{Then:}\\
\foreignlanguage{english}{}\\
\foreignlanguage{english}{$\mathrm{(}\mathrm{i}\mathrm{)}$ $\Delta_{h}F\in W^{1,q}\left(\Omega_{\vert h\vert}\right)$
and $\partial_{i}(\Delta_{h}F)=\Delta_{h}(\partial_{i}F)$ for every
$\,i\in\{1,\ldots,n\}$.}\\

\selectlanguage{english}%
\noindent $\mathrm{(}\mathrm{ii}\mathrm{)}$ If at least one of the
functions $F$ or $G$ has support contained in $\Omega_{\vert h\vert}$,
then 
\[
\int_{\Omega}F\,\Delta_{h}G\,dx\,=\,-\int_{\Omega}G\,\Delta_{-h}F\,dx.
\]
$\mathrm{(}\mathrm{iii}\mathrm{)}$ We have 
\[
\Delta_{h}(FG)(x)=F(x+he_{j})\Delta_{h}G(x)\,+\,G(x)\Delta_{h}F(x).
\]
\end{prop}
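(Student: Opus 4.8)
The plan is to establish the three items separately, since each one reduces to a direct manipulation of the defining formula $\Delta_{h}F(x)=h^{-1}\big(F(x+he_{j})-F(x)\big)$.

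For (i), the key preliminary fact is that a translate of a Sobolev function is again Sobolev, with the correspondingly translated weak derivative: if $F\in W^{1,q}(\Omega;\mathbb{R}^{k})$ and $x\in\Omega_{|h|}$, then $x+he_{j}\in\Omega$, so $x\mapsto F(x+he_{j})$ is well defined on $\Omega_{|h|}$, lies in $W^{1,q}(\Omega_{|h|};\mathbb{R}^{k})$, and satisfies $\partial_{i}\big(F(\cdot+he_{j})\big)=(\partial_{i}F)(\cdot+he_{j})$. I would prove this by testing the definition of weak derivative against an arbitrary $\varphi\in C^{\infty}_{c}(\Omega_{|h|})$ and changing variables $y=x+he_{j}$ (alternatively, by density of smooth functions, for which the identity is classical). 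Since $F$ itself belongs to $W^{1,q}(\Omega_{|h|};\mathbb{R}^{k})$, the linearity of the weak derivative then yields $\Delta_{h}F\in W^{1,q}(\Omega_{|h|};\mathbb{R}^{k})$ together with $\partial_{i}(\Delta_{h}F)=h^{-1}\big((\partial_{i}F)(\cdot+he_{j})-\partial_{i}F\big)=\Delta_{h}(\partial_{i}F)$.

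For (ii), I would write
\[
\int_{\Omega}F\,\Delta_{h}G\,dx=\frac{1}{h}\left(\int_{\Omega}F(x)\,G(x+he_{j})\,dx-\int_{\Omega}F(x)\,G(x)\,dx\right),
\]
extend $F$ and $G$ by zero outside $\Omega$, and perform the substitution $y=x+he_{j}$ in the first integral; the hypothesis $\mathrm{supp}\,F\subset\Omega_{|h|}$ (the case $\mathrm{supp}\,G\subset\Omega_{|h|}$ being symmetric) ensures that all integrands are supported where the functions are genuinely defined, so no boundary contributions arise and the first integral equals $\int_{\Omega}F(x-he_{j})\,G(x)\,dx$. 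Regrouping gives $\int_{\Omega}G(x)\,h^{-1}\big(F(x-he_{j})-F(x)\big)\,dx$, which is precisely $-\int_{\Omega}G\,\Delta_{-h}F\,dx$, since $\Delta_{-h}F(x)=(-h)^{-1}\big(F(x-he_{j})-F(x)\big)$.

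Finally, for (iii) I would add and subtract the mixed term $F(x+he_{j})G(x)$ in the numerator of $\Delta_{h}(FG)(x)=h^{-1}\big(F(x+he_{j})G(x+he_{j})-F(x)G(x)\big)$ and regroup the two resulting pieces, obtaining $F(x+he_{j})\Delta_{h}G(x)+G(x)\Delta_{h}F(x)$ at once. The only step carrying any content is the translation-invariance of Sobolev spaces invoked in (i), and even that is entirely standard (see \cite{Giu}); everything else is bookkeeping with a change of variables and an algebraic identity, so I do not anticipate any real obstacle and expect the argument to be short.
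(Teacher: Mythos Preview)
Your proposal is correct and entirely standard. Note, however, that the paper does not actually prove this proposition: it is stated without proof as a collection of elementary properties of difference quotients, with a reference to \cite{Giu}. So there is no approach in the paper to compare against; your argument simply supplies the routine verification that the authors chose to omit.
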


\selectlanguage{english}%
\noindent The next result about the finite difference operator is
a kind of integral version of the Lagrange Theorem and its proof can
be found in \cite[Lemma 8.1]{Giu}. 
\begin{lem}
\noindent \label{lem:Giusti1} If $0<\rho<R$, $\vert h\vert<\frac{R-\rho}{2}$,
$1<q<+\infty$ and $F\in L^{q}(B_{R},\mathbb{R}^{k})$ is such that
$DF\in L^{q}(B_{R},\mathbb{R}^{k\times n})$, then 
\[
\int_{B_{\rho}}\left|\tau_{h}F(x)\right|^{q}dx\,\leq\,c^{q}(n)\,\vert h\vert^{q}\int_{B_{R}}\left|DF(x)\right|^{q}dx.
\]
Moreover 
\[
\int_{B_{\rho}}\left|F(x+he_{j})\right|^{q}dx\,\leq\,\int_{B_{R}}\left|F(x)\right|^{q}dx.
\]
\end{lem}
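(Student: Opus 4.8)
The plan is to reduce to smooth functions by density and then apply the fundamental theorem of calculus together with Jensen's inequality. I would first note that it is enough to establish the first estimate when $F$ is of class $C^{1}$ on a neighbourhood of $\overline{B_{R}}$: for a general $F\in W^{1,q}(B_{R})$ with $DF\in L^{q}$, one approximates $F$ in $W^{1,q}(B_{R'})$, for any $R'<R$, by smooth functions (via mollification on a slightly smaller ball), applies the estimate to the approximants on $B_{\rho}\Subset B_{R'}$, passes to the limit using that the difference quotients converge in $L^{q}(B_{\rho})$ and the gradients in $L^{q}$, and finally lets $R'\to R$.

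For $F\in C^{1}$ and $x\in B_{\rho}$, writing the increment along the segment $t\mapsto x+the_{j}$, $t\in[0,1]$, as the integral of its derivative gives
\[
\tau_{h}F(x)=F(x+he_{j})-F(x)=h\int_{0}^{1}\partial_{j}F(x+the_{j})\,dt .
\]
Since $dt$ is a probability measure on $[0,1]$, Jensen's inequality yields
\[
\left|\tau_{h}F(x)\right|^{q}\,\leq\,|h|^{q}\int_{0}^{1}\left|\partial_{j}F(x+the_{j})\right|^{q}dt\,\leq\,|h|^{q}\int_{0}^{1}\left|DF(x+the_{j})\right|^{q}dt .
\]
I would then integrate over $B_{\rho}$, exchange the order of integration by Fubini, and for each fixed $t\in[0,1]$ perform the change of variables $y=x+the_{j}$. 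Because $|th|\leq|h|<\tfrac{R-\rho}{2}<R-\rho$, the translated ball $B_{\rho}+the_{j}$ is contained in $B_{R}$, so
\[
\int_{B_{\rho}}\left|DF(x+the_{j})\right|^{q}dx=\int_{B_{\rho}+the_{j}}\left|DF(y)\right|^{q}dy\,\leq\,\int_{B_{R}}\left|DF(y)\right|^{q}dy ,
\]
and combining the last two displays gives $\int_{B_{\rho}}|\tau_{h}F|^{q}\,dx\leq|h|^{q}\int_{B_{R}}|DF|^{q}\,dy$. This already yields the first inequality with $c(n)=1$; the constant $c(n)$ in the statement simply leaves room for the harmless losses in the approximation step and in bounding $|\partial_{j}F|$ by $|DF|$.

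For the second assertion, the change of variables $y=x+he_{j}$ gives $\int_{B_{\rho}}|F(x+he_{j})|^{q}\,dx=\int_{B_{\rho}+he_{j}}|F(y)|^{q}\,dy$, and once more $|h|<\tfrac{R-\rho}{2}<R-\rho$ forces $B_{\rho}+he_{j}\subset B_{R}$, so this integral is at most $\int_{B_{R}}|F(y)|^{q}\,dy$. I do not expect any genuine obstacle here: the only points requiring care are the density argument passing from $C^{1}$ to $W^{1,q}$ functions and the verification that all the translated balls $B_{\rho}+the_{j}$, $t\in[0,1]$, remain inside $B_{R}$ — which is precisely what the hypothesis $|h|<(R-\rho)/2$ guarantees (in fact $|h|<R-\rho$ would already suffice).
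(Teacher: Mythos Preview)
Your proof is correct and is precisely the standard argument: the fundamental theorem of calculus along the segment, Jensen's inequality, Fubini, and the translation $y=x+the_{j}$, together with the inclusion $B_{\rho}+the_{j}\subset B_{R}$. The paper does not give its own proof of this lemma but simply cites \cite[Lemma~8.1]{Giu}; your argument is essentially the one found there, so there is nothing to compare.
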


\noindent Finally, we recall the following fundamental result, whose
proof can be found in \cite[Lemma 8.2]{Giu}: 
\begin{lem}
\noindent \label{lem:RappIncre} Let $F:\mathbb{R}^{n}\rightarrow\mathbb{R}^{k}$,
$F\in L^{q}(B_{R},\mathbb{R}^{k})$ with $1<q<+\infty$. Suppose that
there exist $\rho\in(0,R)$ and a constant $M>0$ such that 
\[
\sum_{j=1}^{n}\int_{B_{\rho}}\left|\tau_{j,h}F(x)\right|^{q}dx\,\leq\,M^{q}\,\vert h\vert^{q}
\]
for every $h\in\mathbb{R}$ with $\vert h\vert<\frac{R-\rho}{2}$.
Then $F\in W^{1,q}(B_{\rho},\mathbb{R}^{k})$. Moreover 
\[
\Vert DF\Vert_{L^{q}(B_{\rho})}\leq M
\]
and 
\[
\Delta_{j,h}F\rightarrow\partial_{j}F\,\,\,\,\,\,\,\,\,\,in\,\,L_{loc}^{q}(B_{R},\mathbb{R}^{k})\,\,\,\,\mathit{as}\,\,h\rightarrow0,
\]
for each $j\in\{1,\ldots,n\}$. 
\end{lem}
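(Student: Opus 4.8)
The plan is to exploit the reflexivity of $L^{q}$ for $1<q<\infty$ together with the discrete integration-by-parts formula for difference quotients. First I would rewrite the hypothesis as a uniform bound on the difference quotients: dividing by $|h|^{q}$ gives $\sum_{j=1}^{n}\int_{B_{\rho}}|\Delta_{j,h}F|^{q}\,dx\le M^{q}$ for all $0<|h|<\tfrac{R-\rho}{2}$. In particular, for each fixed $j$ the family $\{\Delta_{j,h}F\}$ is bounded in the reflexive space $L^{q}(B_{\rho};\mathbb{R}^{k})$, so along any sequence $h_{m}\to 0$ one may extract a (non-relabelled) subsequence and find $v_{j}\in L^{q}(B_{\rho};\mathbb{R}^{k})$ with $\Delta_{j,h_{m}}F\rightharpoonup v_{j}$ weakly in $L^{q}(B_{\rho};\mathbb{R}^{k})$.

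The next step is to identify $v_{j}$ with the weak partial derivative $\partial_{j}F$. Fix $\varphi\in C_{c}^{\infty}(B_{\rho};\mathbb{R}^{k})$; for $|h|$ small enough that $\mathrm{supp}\,\varphi\subset(B_{\rho})_{|h|}$, part (ii) of the Proposition on difference quotients yields
\[
\int_{B_{\rho}}\Delta_{j,h}F\cdot\varphi\,dx\,=\,-\int_{B_{\rho}}F\cdot\Delta_{j,-h}\varphi\,dx .
\]
Since $\varphi$ is smooth and compactly supported, $\Delta_{j,-h}\varphi\to\partial_{j}\varphi$ uniformly on $B_{\rho}$ as $h\to 0$, so the right-hand side tends to $-\int_{B_{\rho}}F\cdot\partial_{j}\varphi\,dx$, while along the chosen subsequence the left-hand side tends to $\int_{B_{\rho}}v_{j}\cdot\varphi\,dx$. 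Hence $\int_{B_{\rho}}v_{j}\cdot\varphi\,dx=-\int_{B_{\rho}}F\cdot\partial_{j}\varphi\,dx$ for every test function, i.e.\ $\partial_{j}F=v_{j}$ weakly on $B_{\rho}$. Doing this for every $j$ shows $F\in W^{1,q}(B_{\rho};\mathbb{R}^{k})$; moreover, since the limit is uniquely determined and hence independent of the extracted subsequence, a standard subsequence argument upgrades the weak convergence to $\Delta_{j,h}F\rightharpoonup\partial_{j}F$ along the full family $h\to 0$. The norm bound then follows from the weak lower semicontinuity of the convex functional $G\mapsto\sum_{j}\int_{B_{\rho}}|G_{j}|^{q}\,dx$: passing to the weak limit in the uniform bound above gives $\sum_{j}\int_{B_{\rho}}|\partial_{j}F|^{q}\,dx\le M^{q}$, that is, $\Vert DF\Vert_{L^{q}(B_{\rho})}\le M$.

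It remains to establish the \emph{strong} convergence $\Delta_{j,h}F\to\partial_{j}F$ in $L^{q}_{loc}(B_{\rho};\mathbb{R}^{k})$, now that $F\in W^{1,q}(B_{\rho})$ is known. For this I would argue by density. Fix $B_{\rho'}\Subset B_{\rho''}\Subset B_{\rho}$ and $\varepsilon>0$, and take $G\in C^{\infty}(\overline{B_{\rho''}};\mathbb{R}^{k})$ (for instance a mollification of $F$) with $\Vert D(F-G)\Vert_{L^{q}(B_{\rho''})}<\varepsilon$. By Lemma \ref{lem:Giusti1} applied to $F-G$ on $B_{\rho''}$, one has $\Vert\Delta_{j,h}(F-G)\Vert_{L^{q}(B_{\rho'})}\le c(n)\Vert D(F-G)\Vert_{L^{q}(B_{\rho''})}<c(n)\varepsilon$ for $|h|$ small, while $\Delta_{j,h}G\to\partial_{j}G$ uniformly on $\overline{B_{\rho'}}$. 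Splitting
\[
\Vert\Delta_{j,h}F-\partial_{j}F\Vert_{L^{q}(B_{\rho'})}\le\Vert\Delta_{j,h}(F-G)\Vert_{L^{q}(B_{\rho'})}+\Vert\Delta_{j,h}G-\partial_{j}G\Vert_{L^{q}(B_{\rho'})}+\Vert\partial_{j}(G-F)\Vert_{L^{q}(B_{\rho'})}
\]
and letting $h\to 0$ gives $\limsup_{h\to 0}\Vert\Delta_{j,h}F-\partial_{j}F\Vert_{L^{q}(B_{\rho'})}\le(c(n)+1)\varepsilon$, and since $\varepsilon$ is arbitrary the convergence follows.

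The step I expect to require the most care is the identification of the weak limit: one must apply the difference-quotient integration by parts only for $|h|$ small relative to the distance from $\mathrm{supp}\,\varphi$ to $\partial B_{\rho}$, and — crucially — check that the weak limit $v_{j}$ does not depend on the extracted subsequence, so that the conclusion genuinely concerns the limit $h\to 0$ of the whole family and not merely a single sequence. Everything else is routine functional analysis together with the density machinery already encoded in Lemma \ref{lem:Giusti1}.
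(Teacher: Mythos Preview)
The paper does not give its own proof of this lemma; it simply records the statement and refers to \cite[Lemma 8.2]{Giu}. Your argument is precisely the standard proof of this classical fact---uniform $L^{q}$ bound on the difference quotients, weak compactness by reflexivity, identification of the weak limit via the discrete integration-by-parts formula, norm bound by weak lower semicontinuity, and strong convergence by density combined with Lemma~\ref{lem:Giusti1}---and it is correct.

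One small remark: the lemma as stated asserts strong convergence of $\Delta_{j,h}F$ in $L^{q}_{loc}(B_{R})$, whereas your density argument yields it on compact subsets of $B_{\rho}$. This is in fact all that the hypotheses can give, since nothing is assumed about $DF$ on $B_{R}\setminus B_{\rho}$; the discrepancy is a harmless imprecision in the stated lemma (and is irrelevant for the applications in the paper, where one always works well inside the ball on which the difference-quotient bound has been established). Your proof covers exactly what is needed.
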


\section{Besov spaces \label{sec:Besov spaces}}

\noindent $\hspace*{1em}$Here we recall some essential facts on the
Besov spaces involved in this paper (see, for example, \cite{Triebel}
and \cite{TriebelIV}).\\
 We denote by $\mathcal{S}(\mathbb{R}^{n})$ and $\mathcal{S}'(\mathbb{R}^{n})$
the Schwartz space and the space of tempered distributions on $\mathbb{R}^{n}$,
respectively. If $v\in\mathcal{S}(\mathbb{R}^{n})$, then 
\begin{equation}
\hat{v}(\xi)=(\mathcal{F}v)(\xi)=(2\pi)^{-n/2}\int_{\mathbb{R}^{n}}e^{-i\,\langle x,\xi\rangle}\,v(x)\,dx,\,\,\,\,\,\,\,\,\xi\in\mathbb{R}^{n},\label{eq:Fourier}
\end{equation}
denotes the Fourier transform of $v$. As usual, $\mathcal{F}^{-1}v$
and $v^{\vee}$ stand for the inverse Fourier transform, given by
the right-hand side of (\ref{eq:Fourier}) with $i$ in place of $-i$.
Both $\mathcal{F}$ and $\mathcal{F}^{-1}$ are extended to $\mathcal{S}'(\mathbb{R}^{n})$
in the standard way.\\
 Now, let $\Gamma(\mathbb{R}^{n})$ be the collection of all sequences
$\varphi=\{\varphi_{j}\}_{j=0}^{\infty}\subset\mathcal{S}(\mathbb{R}^{n})$
such that 
\[
\begin{cases}
\begin{array}{cc}
\mathrm{supp}\,\varphi_{0}\subset\{x\in\mathbb{R}^{n}:|x|\le2\}\quad\quad\quad\quad\,\,\,\\
\mathrm{supp}\,\varphi_{j}\subset\{x\in\mathbb{R}^{n}:2^{j-1}\le|x|\le2^{j+1}\} & \mathrm{if}\,\,j\in\mathbb{N},
\end{array}\end{cases}
\]
for every multi-index $\beta$ there exists a positive number $c_{\beta}$
such that 
\[
2^{j\vert\beta\vert}\,\vert D^{\beta}\varphi_{j}(x)\vert\le c_{\beta}\,,\,\,\,\,\,\,\,\,\forall\,j\in\mathbb{N}_{0},\ \forall\,x\in\mathbb{R}^{n}
\]
and 
\[
\sum_{j=0}^{\infty}\varphi_{j}(x)=1\,,\,\,\,\,\,\,\,\,\forall\,x\in\mathbb{R}^{n}.
\]
Then, it is well known that $\Gamma(\mathbb{R}^{n})$ is not empty
(see \cite[Section 2.3.1, Remark 1]{Triebel}). Moreover, if $\{\varphi_{j}\}_{j=0}^{\infty}\in\Gamma(\mathbb{R}^{n})$,
the entire analytic functions $(\varphi_{j}\,\hat{v})^{\vee}(x)$
make sense pointwise in $\mathbb{R}^{n}$ for any $v\in\mathcal{S}'(\mathbb{R}^{n})$.
Therefore, the following definition makes sense: 
\begin{defn}
\noindent Let $s\in\mathbb{R}$, $1\leq p,q\le\infty$ and $\varphi=\{\varphi_{j}\}_{j=0}^{\infty}\in\Gamma(\mathbb{R}^{n})$.
We define the \textit{Besov space} $B_{p,q}^{s}(\mathbb{R}^{n})$
as the set of all $v\in\mathcal{S}'(\mathbb{R}^{n})$ such that 
\begin{equation}
\Vert v\Vert_{B_{p,q}^{s}(\mathbb{R}^{n})}:=\left(\sum_{j=0}^{\infty}2^{jsq}\,\Vert(\varphi_{j}\,\hat{v})^{\vee}\Vert_{L^{p}(\mathbb{R}^{n})}^{q}\right)^{\frac{1}{q}}<+\infty\,\,\,\,\,\,\,\,\,\,\mathrm{if}\,\,q<\infty,\label{eq:quasi-norm}
\end{equation}
and 
\begin{equation}
\Vert v\Vert_{B_{p,q}^{s}(\mathbb{R}^{n})}:=\,\sup_{j\,\in\,\mathbb{N}_{0}}\,2^{js}\,\Vert(\varphi_{j}\,\hat{v})^{\vee}\Vert_{L^{p}(\mathbb{R}^{n})}<+\infty\,\,\,\,\,\,\,\,\,\,\mathrm{if}\,\,q=\infty.\label{eq:quasi-norm2}
\end{equation}
\begin{brem} The space $B_{p,q}^{s}(\mathbb{R}^{n})$ defined above
is a Banach space with respect to the norm $\Vert\cdot\Vert_{B_{p,q}^{s}(\mathbb{R}^{n})}$.
Obviously, this norm depends on the chosen sequence $\varphi\in\Gamma(\mathbb{R}^{n})$,
but this is not the case for the spaces $B_{p,q}^{s}(\mathbb{R}^{n})$
themselves, in the sense that two different choices for the sequence
$\varphi$ give rise to equivalent norms (see \cite[Sections 2.3.2 and 2.3.3]{Triebel}).
This justifies our omission of the dependence on $\varphi$ in the
left-hand side of (\ref{eq:quasi-norm})$-$(\ref{eq:quasi-norm2})
and in the sequel.\end{brem}
\end{defn}

\noindent $\hspace*{1em}$The norms of the \textit{classical Besov
spaces} $B_{p,q}^{s}(\mathbb{R}^{n})$ with $s\in(0,1)$, $1\leq p<\infty$
and $1\le q\le\infty$ can be characterized via differences of the
functions involved, cf. \cite[Section 2.5.12, Theorem 1]{Triebel}.
More precisely, for $h\in\mathbb{R}^{n}$ and a measurable function
$v:\mathbb{R}^{n}\rightarrow\mathbb{R}^{k}$, let us define 
\[
\delta_{h}v(x):=\,v(x+h)-v(x).
\]
Then we have the equivalence
\begin{equation}
\Vert v\Vert_{B_{p,q}^{s}(\mathbb{R}^{n})}\,\approx\,\Vert v\Vert_{L^{p}(\mathbb{R}^{n})}\,+\,[v]_{B_{p,q}^{s}(\mathbb{R}^{n})}\,,\label{eq:equivalence}
\end{equation}

\noindent where 
\begin{equation}
[v]_{B_{p,q}^{s}(\mathbb{R}^{n})}:=\biggl({\displaystyle \int_{\mathbb{R}^{n}}\biggl({\displaystyle \int_{\mathbb{R}^{n}}\dfrac{|\delta_{h}v(x)|^{p}}{|h|^{sp}}\,dx\biggr)^{\frac{q}{p}}\dfrac{dh}{|h|^{n}}\biggr)^{\frac{1}{q}},\,\,\,\,\,\,\,\,\,\,\text{if}\,\,\,1\le q<\infty},}\label{eq:BeNorm1}
\end{equation}

\noindent and 
\begin{equation}
[v]_{B_{p,\infty}^{s}(\mathbb{R}^{n})}:={\displaystyle \sup_{h\,\in\,\mathbb{R}^{n}}\biggl({\displaystyle \int_{\mathbb{R}^{n}}\dfrac{|\delta_{h}v(x)|^{p}}{|h|^{sp}}\,dx\biggr)^{\frac{1}{p}}}}.\label{eq:BeNorm2}
\end{equation}

\noindent In (\ref{eq:BeNorm1}), if one simply integrates for $\vert h\vert<r$
for a fixed $r>0$, then an equivalent norm is obtained, since 
\begin{center}
$\biggl({\displaystyle \int_{\{|h|\,\geq\,r\}}\biggl({\displaystyle \int_{\mathbb{R}^{n}}\dfrac{|\delta_{h}v(x)|^{p}}{|h|^{sp}}\,dx\biggr)^{\frac{q}{p}}\dfrac{dh}{|h|^{n}}\biggr)^{\frac{1}{q}}\leq\,c(n,s,p,q,r)\,\Vert v\Vert_{L^{p}(\mathbb{R}^{n})}}}\,.$ 
\par\end{center}

\noindent Similarly, in (\ref{eq:BeNorm2}) one can simply take the
supremum over $|h|\leq r$ and obtain an equivalent norm. By construction,
$B_{p,q}^{s}(\mathbb{R}^{n})\subset L^{p}(\mathbb{R}^{n})$.\\
$\hspace*{1em}${Let $\Omega$ be an arbitrary open
set in $\mathbb{R}^{n}$. As usual, $\mathcal{D}(\Omega)=C_{0}^{\infty}(\Omega)$
stands for the space of all infinitely differentiable functions in
$\mathbb{R}^{n}$ with compact support in $\Omega$. Let $\mathcal{D}'(\Omega)$
be the dual space of all distributions in $\Omega$ and let $g\in\mathcal{S}'(\mathbb{R}^{n})$.
Then we denote by $g\vert_{\Omega}$ its restriction to $\Omega$,
i.e. 
\[
g\vert_{\Omega}\in\mathcal{D}'(\Omega):\,\,\,\,\,(g\vert_{\Omega})(\phi)=g(\phi)\,\,\,\,\,\,\,\mathrm{for}\,\,\phi\in\mathcal{D}(\Omega).
\]
\begin{defn}
\noindent Let $\Omega$ be an arbitrary domain in $\mathbb{R}^{n}$
with $\Omega\neq\mathbb{R}^{n}$ and let $s\in\mathbb{R}$, $1\leq p\le\infty$
and $1\leq q\le\infty$. Then 
\[
B_{p,q}^{s}(\Omega):=\left\{ v\in\mathcal{D}'(\Omega):\,v=g\vert_{\Omega}\,\,\,\mathrm{for\,\,some}\,\,g\in B_{p,q}^{s}(\mathbb{R}^{n})\right\} 
\]
and
\[
\Vert v\Vert_{B_{p,q}^{s}(\Omega)}:=\,\inf\,\Vert g\Vert_{B_{p,q}^{s}(\mathbb{R}^{n})}\,,
\]
where the infimum is taken over all $g\in B_{p,q}^{s}(\mathbb{R}^{n})$
such that $g\vert_{\Omega}=v$.
\end{defn}

\noindent $\hspace*{1em}$If $\Omega$ is a bounded $C^{\infty}$-domain
in $\mathbb{R}^{n}$, then the \textit{restriction operator} 
\[
\mathrm{re}_{\Omega}:\mathcal{S}'(\mathbb{R}^{n})\hookrightarrow\mathcal{D}'(\Omega),\,\,\,\,\,\,\mathrm{re}_{\Omega}(v)=v\vert_{\Omega}
\]
generates a linear and bounded map from $B_{p,q}^{s}(\mathbb{R}^{n})$
onto $B_{p,q}^{s}(\Omega)$. Furthermore, the spaces $B_{p,q}^{s}(\Omega)$
satisfy the so-called \textit{extension property}, as ensured by the
next theorem. 
\begin{thm}
\noindent \label{thm:extension} Let $s\in\mathbb{R}$, let $1\leq p,q\le\infty$
and let $\Omega$ be a bounded $C^{\infty}$-domain in $\mathbb{R}^{n}$.
Then, there exists a linear and bounded extension operator $\mathrm{ext}_{\Omega}:B_{p,q}^{s}(\Omega)\hookrightarrow B_{p,q}^{s}(\mathbb{R}^{n})$
such that $\mathrm{re}_{\Omega}\circ\mathrm{ext}_{\Omega}=\mathrm{id}$,
where $\mathrm{id}$ is the identity in $B_{p,q}^{s}(\Omega)$. 
\end{thm}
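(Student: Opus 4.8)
The plan is to deduce the statement, as is standard, from the corresponding result on a half-space, via a localization and boundary-flattening argument, quoting the core analytic fact — existence of a bounded extension from a half-space — from the literature.

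First I would fix a finite open cover $U_{0},U_{1},\dots,U_{N}$ of $\overline{\Omega}$ with $U_{0}\Subset\Omega$ and, since $\partial\Omega$ is a compact $C^{\infty}$ hypersurface, for each $j\ge 1$ a $C^{\infty}$-diffeomorphism $\Psi_{j}$ (together with its inverse, with bounded derivatives of all orders) mapping $U_{j}$ onto a ball, $U_{j}\cap\Omega$ into $\mathbb{R}^{n}_{+}=\{x_{n}>0\}$ and $U_{j}\cap\partial\Omega$ into $\{x_{n}=0\}$. Choosing a smooth partition of unity $\{\phi_{j}\}_{j=0}^{N}$ subordinate to $\{U_{j}\}$, I would write $v=\sum_{j=0}^{N}\phi_{j}v$. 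The interior piece $\phi_{0}v$, having compact support in $\Omega$, is well defined from $v$ alone and is extended by zero, which is bounded on $B^{s}_{p,q}$. Each boundary piece $\phi_{j}v$ $(j\ge 1)$ is pushed forward by $\Psi_{j}$ to a compactly supported element of $B^{s}_{p,q}(\mathbb{R}^{n}_{+})$; here one uses the diffeomorphism invariance of $B^{s}_{p,q}$ (up to equivalence of quasi-norms) and its invariance under multiplication by $C^{\infty}_{c}$ functions, both valid for all admissible parameters, see \cite{Triebel}. Hence, granted a single linear bounded operator $\mathcal{E}\colon B^{s}_{p,q}(\mathbb{R}^{n}_{+})\to B^{s}_{p,q}(\mathbb{R}^{n})$ with $(\mathcal{E}g)|_{\mathbb{R}^{n}_{+}}=g$, one glues the half-space extensions back through the charts against suitable cut-offs, namely
\[
\mathrm{ext}_{\Omega}v:=\widetilde{\phi_{0}v}+\sum_{j=1}^{N}\Psi_{j}^{*}\bigl(\eta_{j}\cdot\mathcal{E}(\Psi_{j*}(\phi_{j}v))\bigr),
\]
where $\widetilde{\phantom{x}}$ is extension by zero and $\eta_{j}\in C^{\infty}_{c}$ equals $1$ on the image of a neighbourhood of $\mathrm{supp}\,\phi_{j}\cap\overline{\Omega}$; this operator is linear and bounded, and $\mathrm{re}_{\Omega}\circ\mathrm{ext}_{\Omega}=\mathrm{id}$ by construction.

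For the half-space extension $\mathcal{E}$, in the range actually used in this paper — $0<s<1$, $1\le p<\infty$, $1\le q\le\infty$ — I would take Stein's extension operator for Sobolev spaces, i.e.\ the operator defined for $x_{n}<0$ by a weighted average over dilations of the reflections $x'\mapsto g(x',-\mu x_{n})$, with the averaging measure chosen so that $\mathcal{E}\colon W^{k,p}(\mathbb{R}^{n}_{+})\to W^{k,p}(\mathbb{R}^{n})$ is bounded for every $k\in\mathbb{N}_{0}$ simultaneously, and then interpolate: since $B^{s}_{p,q}(\mathbb{R}^{n}_{+})=(L^{p}(\mathbb{R}^{n}_{+}),W^{1,p}(\mathbb{R}^{n}_{+}))_{s,q}$ and likewise over $\mathbb{R}^{n}$, real interpolation of the endpoint bounds $\mathcal{E}\colon L^{p}\to L^{p}$ and $\mathcal{E}\colon W^{1,p}\to W^{1,p}$ yields boundedness on $B^{s}_{p,q}$, and the reproducing identity $(\mathcal{E}g)|_{\mathbb{R}^{n}_{+}}=g$ is preserved. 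For arbitrary $s\in\mathbb{R}$ and $0<p,q\le\infty$ one instead invokes the parameter-free construction in \cite{Triebel} (built from local means), equivalently Rychkov's universal extension operator, which is assembled directly from a Littlewood--Paley decomposition and is therefore insensitive to the sign of $s$ and to $p,q<1$. The main obstacle is precisely this fully general half-space step: a naive Hestenes-type reflection $g(x',x_{n})\mapsto\sum_{m}a_{m}\,g(x',-b_{m}x_{n})$ on $\{x_{n}<0\}$ reproduces the correct smoothness only up to the finite order controlled by the moment conditions $\sum_{m}a_{m}(-b_{m})^{\ell}=1$ one can impose, and for $s\le 0$ (where elements of $B^{s}_{p,q}$ need not possess traces) or for $p<1$ its continuity on $B^{s}_{p,q}$ is not clear; the intrinsic Fourier-analytic constructions are exactly what is needed to bypass this, while for the present paper the elementary interpolation argument above already covers every Besov space that occurs.
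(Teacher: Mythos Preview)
Your outline is mathematically sound and is precisely the standard route to such extension theorems; the paper, however, gives no proof at all --- it simply refers to \cite[Theorem~2.82]{TriebelIV}. So there is nothing to compare on the level of argument: you have supplied what the paper chose to quote.

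One correction to your closing remark, though: it is \emph{not} true that the elementary interpolation argument (Stein's operator plus $(L^{p},W^{1,p})_{s,q}=B^{s}_{p,q}$ for $0<s<1$) covers every Besov space that occurs in this paper. In the proof of Proposition~\ref{prop:uniform2} the extension operator is invoked for the space $B^{-2/p}_{p',1}(B_{r})$, i.e.\ with a \emph{negative} smoothness index $s=-2/p$, applied to $\partial_{j}f_{\varepsilon}$. That case falls outside the range $0<s<1$ reached by interpolating $L^{p}$ and $W^{1,p}$, so you genuinely need the parameter-free construction you mention at the end (Rychkov's universal extension, or the local-means version in \cite{Triebel,TriebelIV}). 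This does not affect the correctness of your scheme --- you do cite the right tool for the general case --- but your assertion that the elementary argument ``already covers every Besov space that occurs'' is inaccurate and should be dropped.
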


\noindent We refer to \cite[Theorem 2.82]{TriebelIV} and \cite[Theorem 3.3.4]{Triebel} for a proof
of the previous theorem.

\noindent $\hspace*{1em}$If $s\in\mathbb{R}$, $1\leq p<\infty$
and $1\leq q<\infty$, then $\mathcal{S}(\mathbb{R}^{n})$ is a dense
subset of $B_{p,q}^{s}(\mathbb{R}^{n})$ (cf. \cite[Theorem 2.3.3]{Triebel}).
Consequently, in that case, a continuous linear functional on $B_{p,q}^{s}(\mathbb{R}^{n})$
can be interpreted in the usual way as an element of $\mathcal{S}'(\mathbb{R}^{n})$.
More precisely, $g\in\mathcal{S}'(\mathbb{R}^{n})$ belongs to the
dual space $(B_{p,q}^{s}(\mathbb{R}^{n}))'$ of the space $B_{p,q}^{s}(\mathbb{R}^{n})$
if and only if there exists a positive number $c$ such that
\[
\vert g(\phi)\vert\leq\,c\,\Vert\phi\Vert_{B_{p,q}^{s}(\mathbb{R}^{n})}\,\,\,\,\,\,\,\,\,\,\mathrm{for\,\,all\,\,}\phi\in\mathcal{S}(\mathbb{R}^{n})\,.
\]
We endow $(B_{p,q}^{s}(\mathbb{R}^{n}))'$ with the natural dual norm,
defined by 
\[
\Vert g\Vert_{(B_{p,q}^{s}(\mathbb{R}^{n}))'}=\,\sup\,\left\{ \vert g(\phi)\vert:\phi\in\mathcal{S}(\mathbb{R}^{n})\,\,\,\mathrm{and}\,\,\,\Vert\phi\Vert_{B_{p,q}^{s}(\mathbb{R}^{n})}\leq1\right\} ,\,\,\,\,\,\,\,\,g\in(B_{p,q}^{s}(\mathbb{R}^{n}))'.
\]
Now we recall the following duality formula, which has to be meant
as an isomorphism of normed spaces (see \cite[Theorem 2.11.2]{Triebel}). 
\begin{thm}
\noindent \label{thm:duality00} Let $s\in\mathbb{R}$, $1\le p<\infty$
and $1\le q<\infty$. Then 
\[
(B_{p,q}^{s}(\mathbb{R}^{n}))'=B_{p',q'}^{-s}(\mathbb{R}^{n})\,,
\]
where $p'=\infty$ if $p=1$ (similarly for $q'$).
\end{thm}

\noindent \begin{brem} The restrictions $p<\infty$ and $q<\infty$
in Theorem \ref{thm:duality00} are natural, since, if either $p=\infty$
or $q=\infty$, then $\mathcal{S}(\mathbb{R}^{n})$ is not dense in
$B_{p,q}^{s}(\mathbb{R}^{n})$, and the density of $\mathcal{S}(\mathbb{R}^{n})$
in $B_{p,q}^{s}(\mathbb{R}^{n})$ is the basis of our interpretation
of the dual space $(B_{p,q}^{s}(\mathbb{R}^{n}))'$.\end{brem}\smallskip{}

\noindent $\hspace*{1em}$For our purposes, we now give the following
definition.
\begin{defn}
\noindent For $s\in\mathbb{R}$, $1\le p\leq\infty$ and $1\le q\leq\infty$,
we define $\mathring{B}_{p,q}^{s}(\mathbb{R}^{n})$ as the completion
of $\mathcal{S}(\mathbb{R}^{n})$ in $B_{p,q}^{s}(\mathbb{R}^{n})$
with respect to the norm 
\[
v\mapsto\Vert v\Vert_{B_{p,q}^{s}(\mathbb{R}^{n})}\,.
\]
Of course, only the limit cases $\max\,\{p,q\}=\infty$ are of interest.
We shall denote by $(\mathring{B}_{p,q}^{s}(\mathbb{R}^{n}))'$ the
topological dual of\textit{ }$\mathring{B}_{p,q}^{s}(\mathbb{R}^{n})$,
which is endowed with the natural dual norm 
\[
\Vert g\Vert_{(\mathring{B}_{p,q}^{s}(\mathbb{R}^{n}))'}=\,\sup\,\left\{ \vert g(\phi)\vert:\phi\in\mathcal{S}(\mathbb{R}^{n})\,\,\,\mathrm{and}\,\,\,\Vert\phi\Vert_{B_{p,q}^{s}(\mathbb{R}^{n})}\leq1\right\} ,\,\,\,\,\,\,\,\,g\in(\mathring{B}_{p,q}^{s}(\mathbb{R}^{n}))'.
\]
\end{defn}

\noindent $\hspace*{1em}$The following duality result can be found
in \cite[Section 2.11.2, Remark 2]{Triebel} (see also \cite[pages 121 and 122]{Tri0}).
\begin{thm}
\label{duality} Let $s\in\mathbb{R}$, $1\le p\le\infty$ and $1\le q\le\infty$.
Then 
\[
(\mathring{B}_{p,q}^{s}(\mathbb{R}^{n}))'=B_{p',q'}^{-s}(\mathbb{R}^{n})\,,
\]
where $p'=1$ if $p=\infty$ (similarly for $q'$).
\end{thm}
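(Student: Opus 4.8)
The plan is to deduce this isomorphism from the corresponding duality of vector-valued sequence spaces, using the dyadic decomposition recalled above to realise $B_{p,q}^{s}(\mathbb{R}^{n})$ as a \emph{retract} of such a sequence space. Fix $\varphi=\{\varphi_{j}\}_{j=0}^{\infty}\in\Gamma(\mathbb{R}^{n})$ and, for $v\in\mathcal{S}'(\mathbb{R}^{n})$, write $v_{j}:=(\varphi_{j}\hat{v})^{\vee}$ for its $j$-th dyadic block, so that, by the very definition \eqref{eq:quasi-norm}, $\Vert v\Vert_{B_{p,q}^{s}}=\Vert(v_{j})_{j}\Vert_{\ell_{s}^{q}(L^{p})}$, where $\ell_{s}^{q}(L^{p})$ is the space of sequences $(f_{j})_{j}$ of functions on $\mathbb{R}^{n}$ equipped with $\Vert(f_{j})_{j}\Vert_{\ell_{s}^{q}(L^{p})}:=\bigl(\sum_{j\ge0}2^{jsq}\Vert f_{j}\Vert_{L^{p}}^{q}\bigr)^{1/q}$ (with the usual modification if $q=\infty$). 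Pick a second family $\psi=\{\psi_{j}\}\subset\mathcal{S}(\mathbb{R}^{n})$ with the same dyadic localisation but slightly larger supports, so that $\psi_{j}\equiv1$ on $\mathrm{supp}\,\varphi_{j}$, and set $Sv:=(v_{j})_{j}$ and $R\bigl((f_{j})_{j}\bigr):=\sum_{j\ge0}(\psi_{j}\hat{f}_{j})^{\vee}$. The first step is to show that $S:B_{p,q}^{s}\to\ell_{s}^{q}(L^{p})$ and $R:\ell_{s}^{q}(L^{p})\to B_{p,q}^{s}$ are bounded and satisfy $R\circ S=\mathrm{id}$ on $B_{p,q}^{s}$; this displays $B_{p,q}^{s}$ as a complemented subspace of $\ell_{s}^{q}(L^{p})$. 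Taking closures of $C_{0}^{\infty}(\mathbb{R}^{n})$ on both sides, $\mathring{B}_{p,q}^{s}(\mathbb{R}^{n})$ (understood as the closure of $C_{0}^{\infty}(\mathbb{R}^{n})$ in $B^{s}_{p,q}(\mathbb{R}^{n})$) becomes a retract of the closure $\mathring{\ell}_{s}^{q}(L^{p})$ of the finitely supported sequences inside $\ell_{s}^{q}(L^{p})$; note that for $1\le p,q<\infty$ one simply has $\mathring{B}_{p,q}^{s}=B_{p,q}^{s}$.

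The second step is purely functional-analytic. One identifies $\bigl(\mathring{\ell}_{s}^{q}(L^{p})\bigr)'=\ell_{-s}^{q'}(L^{p'})$ under the natural bilinear pairing $\langle(f_{j})_{j},(g_{j})_{j}\rangle:=\sum_{j\ge0}\int_{\mathbb{R}^{n}}f_{j}g_{j}\,dx$: for $1\le p,q<\infty$ this is the classical duality of $\ell^{q}(L^{p})$, while for $q=\infty$ (resp.\ $p=\infty$) it is the $c_{0}$-type duality on the closure of the finitely supported sequences that produces the exponent $q'=1$ (resp.\ $p'=1$) --- this is exactly why the statement must be phrased for $\mathring{B}_{p,q}^{s}$ rather than for $B_{p,q}^{s}$ in the endpoint cases. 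Since $R\circ S=\mathrm{id}$ forces $S'\circ R'=\mathrm{id}$, the adjoints $R'$ and $S'$ exhibit $\bigl(\mathring{B}_{p,q}^{s}(\mathbb{R}^{n})\bigr)'$ as a retract of $\ell_{-s}^{q'}(L^{p'})$, via analysis/synthesis maps of precisely the dyadic form that characterises the Besov space of indices $(-s,p',q')$. Unwinding this identification --- and checking that the induced pairing on $\mathring{B}_{p,q}^{s}\times B_{p',q'}^{-s}$ is the continuous extension of $(v,w)\mapsto\int_{\mathbb{R}^{n}}v\,w\,dx$, $v,w\in\mathcal{S}(\mathbb{R}^{n})$ --- gives $\bigl(\mathring{B}_{p,q}^{s}(\mathbb{R}^{n})\bigr)'=B_{p',q'}^{-s}(\mathbb{R}^{n})$ with equivalent quasi-norms, i.e.\ the asserted isomorphism of quasi-normed spaces. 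For the bounded-domain versions used later in the paper, one then combines this with the restriction and extension operators of Theorem \ref{thm:extension}.

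I expect the genuine content to lie in the first step: the boundedness of the synthesis operator $R$ on $\ell_{s}^{q}(L^{p})$ together with the reproducing identity $R\circ S=\mathrm{id}$, which is the only non-formal point. The identity $R\circ S=\mathrm{id}$ follows from $\psi_{j}\equiv1$ on $\mathrm{supp}\,\varphi_{j}$ and $\sum_{j}\varphi_{j}\equiv1$; the boundedness of $R$ rests on Fourier-multiplier estimates of Bernstein--Mikhlin type for dyadically band-limited functions, used to bound the $j$-th dyadic block of $\sum_{k}(\psi_{k}\hat{f}_{k})^{\vee}$ in $L^{p}$ by $\sum_{|k-j|\le C}\Vert f_{k}\Vert_{L^{p}}$, after which the $\ell_{s}^{q}$-summation is straightforward. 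A second, subtler point is the endpoint analysis $p=\infty$ or $q=\infty$: one must verify that $\mathring{B}_{p,q}^{s}(\mathbb{R}^{n})$ is exactly the closed subspace to which the $c_{0}$-type (rather than $\ell^{\infty}$-type) duality applies and that the pairing remains non-degenerate there, so that the conjugate exponents come out as claimed. Once these ingredients are available, transporting the duality through the retract is routine. As an alternative, for the range $s\in(0,1)$ one may instead use the difference characterisation \eqref{eq:BeNorm1}--\eqref{eq:BeNorm2} to embed $B_{p,q}^{s}$ into a mixed-norm Lebesgue space and argue along the same lines, which already suffices for the applications in this paper.
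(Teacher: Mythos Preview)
The paper does not actually prove this theorem: it is stated as a recalled fact with the parenthetical reference ``see \cite[Section 2.11.2, Remark 2]{Triebel}'' and no argument is given. Your retract-to-$\ell^{q}_{s}(L^{p})$ sketch is a correct outline of the standard proof and is essentially the approach carried out in the cited source, so in that sense your proposal matches the (implicit) proof the paper relies on.

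One small caveat worth flagging: the endpoint $p=\infty$ is more delicate than your sketch suggests. The closure of $C_{0}^{\infty}(\mathbb{R}^{n})$ in $L^{\infty}(\mathbb{R}^{n})$ is not a space whose dual is $L^{1}$, so the duality $\bigl(\mathring{\ell}_{s}^{q}(L^{\infty})\bigr)'=\ell_{-s}^{q'}(L^{1})$ cannot be read off from the scalar $L^{p}$ duality alone; one needs the band-limited structure of the dyadic blocks (each $(\varphi_{j}\hat v)^{\vee}$ is an entire function of exponential type) to get the correct identification. This is handled in Triebel's treatment but is glossed over in your phrase ``resp.\ $p'=1$''. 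For the application in this paper only the case $p<\infty$, $q=\infty$ is used (in the proof of Proposition~\ref{prop:uniform2} one needs $(\mathring{B}^{2/p}_{p,\infty})'=B^{-2/p}_{p',1}$), and there your $c_{0}$-type argument goes through cleanly.
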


\noindent $\hspace*{1em}$The next result is a key ingredient for
the proof of Theorem \ref{thm:theo1} and its proof can be found in
\cite[Section 3.3.5]{Triebel}.
\begin{thm}
\label{negder} Let $s\in\mathbb{R}$ and $1\leq p,q\le\infty$. Moreover,
assume that $\Omega$ is a bounded $C^{\infty}$-domain in $\mathbb{R}^{n}$.
Then, for every $v\in B_{p,q}^{s}(\Omega)$ and every $j\in\{1,\ldots,n\}$
we have 
\[
\Vert\partial_{j}v\Vert_{B_{p,q}^{s-1}(\Omega)}\,\le c\,\Vert v\Vert_{B_{p,q}^{s}(\Omega)}
\]
for a positive constant $c$ which is independent of $v$. 
\end{thm}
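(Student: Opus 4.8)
The plan is to reduce the inequality to its analogue on the whole space $\mathbb{R}^{n}$, and then to prove the latter by means of a Littlewood--Paley decomposition.

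\emph{Step 1: reduction to $\mathbb{R}^{n}$.} By Theorem~\ref{thm:extension} there is a bounded linear extension operator $\mathrm{ext}_{\Omega}\colon B_{p,q}^{s}(\Omega)\to B_{p,q}^{s}(\mathbb{R}^{n})$ with $\mathrm{re}_{\Omega}\circ\mathrm{ext}_{\Omega}=\mathrm{id}$. Since differentiation commutes with the restriction in $\mathcal{D}'(\Omega)$, we have $\partial_{j}v=\mathrm{re}_{\Omega}\bigl(\partial_{j}(\mathrm{ext}_{\Omega}v)\bigr)$; hence, by the definition of the quasi-norm of $B_{p,q}^{s-1}(\Omega)$ as an infimum over extensions, it suffices to prove that
\begin{equation}
\Vert\partial_{j}w\Vert_{B_{p,q}^{s-1}(\mathbb{R}^{n})}\le c\,\Vert w\Vert_{B_{p,q}^{s}(\mathbb{R}^{n})}\qquad\text{for every }w\in B_{p,q}^{s}(\mathbb{R}^{n}),\label{eq:plan-Rn}
\end{equation}
and then to apply \eqref{eq:plan-Rn} with $w=\mathrm{ext}_{\Omega}v$, together with the boundedness of $\mathrm{ext}_{\Omega}$.

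\emph{Step 2: the dyadic identity.} To prove \eqref{eq:plan-Rn}, fix $\varphi=\{\varphi_{k}\}_{k\ge0}\in\Gamma(\mathbb{R}^{n})$ and set $\Phi_{k}:=\sum_{|l-k|\le2}\varphi_{l}$ (with $\varphi_{l}:=0$ for $l<0$), a finite sum equal to $1$ on $\mathrm{supp}\,\varphi_{k}$, so that $\varphi_{k}=\varphi_{k}\Phi_{k}$. With the Fourier convention of \eqref{eq:Fourier} one has $\widehat{\partial_{j}w}=i\xi_{j}\hat{w}$, hence $\varphi_{k}\widehat{\partial_{j}w}=(i\xi_{j}\varphi_{k})(\Phi_{k}\hat{w})$ and therefore
\[
(\varphi_{k}\widehat{\partial_{j}w})^{\vee}=c_{n}\,(i\xi_{j}\varphi_{k})^{\vee}*(\Phi_{k}\hat{w})^{\vee}.
\]
The function $i\xi_{j}\varphi_{k}$ is supported in a ball of radius $\approx2^{k}$ and, owing to the bounds $2^{k|\beta|}|D^{\beta}\varphi_{k}|\le c_{\beta}$ built into the definition of $\Gamma(\mathbb{R}^{n})$, satisfies $|D^{\beta}(i\xi_{j}\varphi_{k})|\le c_{\beta}'\,2^{k(1-|\beta|)}$; a routine integration-by-parts estimate for its inverse Fourier transform then yields the scale-invariant bound $\Vert(i\xi_{j}\varphi_{k})^{\vee}\Vert_{L^{1}(\mathbb{R}^{n})}\le c\,2^{k}$, uniformly in $k$.

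\emph{Step 3: summation.} Combining the previous bound with Young's inequality (valid for $1\le p\le\infty$; for $0<p<1$ one invokes instead the convolution inequality for functions whose Fourier transform is supported in a ball, see \cite{Triebel}) gives
\[
\Vert(\varphi_{k}\widehat{\partial_{j}w})^{\vee}\Vert_{L^{p}}\le c\,2^{k}\,\Vert(\Phi_{k}\hat{w})^{\vee}\Vert_{L^{p}}\le c\,2^{k}\sum_{|l-k|\le2}\Vert(\varphi_{l}\hat{w})^{\vee}\Vert_{L^{p}}.
\]
Multiplying by $2^{k(s-1)}$, using $2^{k(s-1)}2^{k}=2^{ks}\le2^{2|s|}2^{ls}$ whenever $|l-k|\le2$, and taking the $\ell^{q}$-(quasi-)norm in $k$ — which amounts only to a fixed shift of indices with bounded overlap — one arrives at \eqref{eq:plan-Rn}, and the theorem follows. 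I expect the main obstacle to be precisely the band-limited Fourier-multiplier estimate: establishing $\Vert(i\xi_{j}\varphi_{k})^{\vee}\Vert_{L^{1}}\le c\,2^{k}$ uniformly in the dyadic scale $k$, and, in the range $0<p<1$, replacing Young's inequality by the correct $L^{p}$-convolution bound for Fourier-localized functions; once these are in hand, everything else is elementary bookkeeping with the dyadic partition of unity and the $\ell^{q}$-(quasi-)norm.
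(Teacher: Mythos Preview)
Your argument is correct: the reduction to $\mathbb{R}^{n}$ via the extension operator, followed by the dyadic Fourier-multiplier estimate $\Vert(i\xi_{j}\varphi_{k})^{\vee}\Vert_{L^{1}}\le c\,2^{k}$ and the $\ell^{q}$ summation, is the standard proof. The paper, however, does not supply its own proof of this statement; it simply records it as a known result and refers to \cite[Section~3.3.5]{Triebel}, where precisely this Littlewood--Paley argument (including the band-limited substitute for Young's inequality in the range $0<p<1$ that you correctly flag) is carried out.
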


\noindent $\hspace*{1em}$We can also define local Besov spaces as
follows. Given a domain $\Omega\subset\mathbb{R}^{n}$, we say that
a function $v$ belongs to $B_{p,q,loc}^{s}(\Omega)$ if $\phi\,v\in B_{p,q}^{s}(\mathbb{R}^{n})$
whenever $\phi\in C_{0}^{\infty}(\Omega)$. 
\begin{defn}
\noindent Let $\Omega\subseteq\mathbb{R}^{n}$ be an open set. For
any $s\in(0,1)$ and for any $q\in[1,+\infty)$, we define the \textit{fractional
Sobolev space} $W^{s,q}(\Omega,\mathbb{R}^{k})$ as follows: 
\[
W^{s,q}(\Omega,\mathbb{R}^{k}):=\left\{ v\in L^{q}(\Omega,\mathbb{R}^{k}):\frac{\left|v(x)-v(y)\right|}{\left|x-y\right|^{\frac{n}{q}\,+\,s}}\,\in L^{q}\left(\Omega\times\Omega\right)\right\} ,
\]
i.e. an intermerdiate Banach space between $L^{q}(\Omega,\mathbb{R}^{k})$
and $W^{1,q}(\Omega,\mathbb{R}^{k})$, endowed with the norm 
\[
\Vert v\Vert_{W^{s,q}(\Omega)}:=\,\Vert v\Vert_{L^{q}(\Omega)}\,+\,[v]_{W^{s,q}(\Omega)}\,,
\]
where the term 
\begin{equation}
[v]_{W^{s,q}(\Omega)}:=\left(\int_{\Omega}\int_{\Omega}\frac{\left|v(x)-v(y)\right|^{q}}{\left|x-y\right|^{n\,+\,sq}}\,dx\,dy\right)^{\frac{1}{q}}\label{eq:Gagliardo}
\end{equation}
is the so-called \textit{Gagliardo seminorm} of $v$.

\noindent \begin{brem} For every $s\in(0,1)$ and every $q\in[1,\infty)$,
we have $B_{q,q}^{s}(\mathbb{R}^{n})=W^{s,q}(\mathbb{R}^{n})$. In
fact, using the change of variable $y=x+h$ in (\ref{eq:Gagliardo})
with $\Omega=\mathbb{R}^{n}$, one gets the seminorm (\ref{eq:BeNorm1})
with $p=q$.\end{brem}
\end{defn}

\noindent $\hspace*{1em}$We conclude this section with the following
embedding result, whose proof can be obtained by combining \cite[Section 2.2.2, Remark 3]{Triebel}
with \cite[Section 2.3.2, Proposition 2(ii)]{Triebel}. 
\begin{lem}
\label{lem:emb} Let $s\in(0,1)$ and $q\ge1$. Then, for every $\sigma\in(0,1-s)$
we have the continuous embedding $W_{loc}^{s+\sigma,q}(\mathbb{R}^{n})\hookrightarrow B_{q,1,loc}^{s}(\mathbb{R}^{n})$. 
\end{lem}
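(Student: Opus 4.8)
The plan is to reduce the claimed embedding to two classical facts about function spaces on the whole of $\mathbb{R}^{n}$, and then to localize. First I would observe that for $1\le q<\infty$ and any $t\in(0,1)$ the fractional Sobolev space $W^{t,q}(\mathbb{R}^{n})$ coincides with the Besov space $B_{q,q}^{t}(\mathbb{R}^{n})$, with equivalent norms: after the change of variables $y=x+h$, the Gagliardo seminorm $[v]_{W^{t,q}(\mathbb{R}^{n})}$ is precisely the difference seminorm $[v]_{B_{q,q}^{t}(\mathbb{R}^{n})}$ appearing in \eqref{eq:BeNorm1} with $p=q$, and the two $L^{q}$-terms coincide as well (see also \cite[Section 2.2.2, Remark 3]{Triebel}). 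Since the hypothesis $\sigma\in(0,1-s)$ guarantees $s+\sigma\in(0,1)$, this identification applies with $t=s+\sigma$ and gives $W^{s+\sigma,q}(\mathbb{R}^{n})=B_{q,q}^{s+\sigma}(\mathbb{R}^{n})$.

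Next I would invoke the elementary monotonicity of Besov spaces in the smoothness index: for a fixed integrability parameter $q$, whenever $s_{0}>s_{1}$ one has the continuous embedding $B_{q,q_{0}}^{s_{0}}(\mathbb{R}^{n})\hookrightarrow B_{q,q_{1}}^{s_{1}}(\mathbb{R}^{n})$ for \emph{all} fine indices $q_{0},q_{1}$, in particular even when $q_{1}<q_{0}$ (see \cite[Section 2.3.2, Proposition 2(ii)]{Triebel}). Applying this with $s_{0}=s+\sigma>s=s_{1}$, $q_{0}=q$ and $q_{1}=1$ yields $B_{q,q}^{s+\sigma}(\mathbb{R}^{n})\hookrightarrow B_{q,1}^{s}(\mathbb{R}^{n})$; chaining this with the identification of the previous step produces the global embedding $W^{s+\sigma,q}(\mathbb{R}^{n})\hookrightarrow B_{q,1}^{s}(\mathbb{R}^{n})$, together with the norm control $\Vert w\Vert_{B_{q,1}^{s}(\mathbb{R}^{n})}\le c\,\Vert w\Vert_{W^{s+\sigma,q}(\mathbb{R}^{n})}$.

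Finally, to pass to the local statement, let $v\in W_{loc}^{s+\sigma,q}(\Omega)$ and fix $\phi\in C_{0}^{\infty}(\Omega)$. Since multiplication by a smooth compactly supported function is a bounded operator on $W^{s+\sigma,q}(\mathbb{R}^{n})$, the function $\phi v$ belongs to $W^{s+\sigma,q}(\mathbb{R}^{n})$, hence to $B_{q,1}^{s}(\mathbb{R}^{n})$ by the global embedding, with $\Vert\phi v\Vert_{B_{q,1}^{s}(\mathbb{R}^{n})}\le c(\phi)\,\Vert v\Vert_{W^{s+\sigma,q}(U)}$ for any open set $U$ containing $\mathrm{supp}\,\phi$. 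As $\phi$ was arbitrary, this shows $v\in B_{q,1,loc}^{s}(\Omega)$, and the displayed bound makes the embedding continuous.

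The argument is entirely soft, so I do not expect a genuine obstacle. The only two points that deserve a moment of care are: checking that the assumption $\sigma<1-s$ is exactly what keeps $s+\sigma$ inside the range $(0,1)$ in which the Slobodeckij--Besov identification is available; and recording the (standard) fact that a function in $C_{0}^{\infty}(\Omega)$ acts as a pointwise multiplier on $W^{s+\sigma,q}(\mathbb{R}^{n})$, which is what legitimizes the localization step.
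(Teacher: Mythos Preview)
Your proposal is correct and follows precisely the route the paper indicates: the paper does not give a detailed argument but simply states that the lemma follows by combining \cite[Section 2.2.2, Remark 3]{Triebel} (the identification $W^{s+\sigma,q}=B_{q,q}^{s+\sigma}$) with \cite[Section 2.3.2, Proposition 2(ii)]{Triebel} (the embedding $B_{q,q}^{s+\sigma}\hookrightarrow B_{q,1}^{s}$ for $\sigma>0$), which is exactly what you spell out. Your added localization step and the remark on pointwise multipliers are the natural completion of that sketch.
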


\section{Estimates for a regularized problem \label{sec:a priori}}

\noindent $\hspace*{1em}$The aim of this section is to establish
some uniform estimates for the gradient of the weak solutions of a
family of suitable approximating problems. More precisely, let $\lambda\geq0$
and let $u\in W_{loc}^{1,p}(\Omega)$ be a local weak solution of
\eqref{eq:degenerate}, for some $p>1$. Fix an open ball $B_{R}\Subset\Omega$
and assume without loss of generality that $R\le1$. For $\varepsilon\in(0,1]$,
we consider the problem 
\begin{equation}
\begin{cases}
\begin{array}{cc}
-\,\mathrm{div}\,(DG_{\varepsilon}(Du_{\varepsilon}))=f_{\varepsilon} & \mathrm{in}\,\,\,B_{R},\\
u_{\varepsilon}=u & \,\,\,\,\,\,\,\mathrm{on}\,\,\,\partial B_{R},\,\,
\end{array}\end{cases}\label{eq:approximation}
\end{equation}
where:\vspace{0.1cm}

\noindent $\hspace*{1em}\bullet$ $\,\,G_{\varepsilon}(z):=\,\frac{1}{p}\,(\vert z\vert-\lambda)_{+}^{p}\,+\,\frac{\varepsilon}{p}\,(1+\vert z\vert^{2})^{\frac{p}{2}},$
for every $z\in\mathbb{R}^{n}$;\vspace{0.2cm}

\noindent $\hspace*{1em}\bullet$ $\,\,f_{\varepsilon}:=f\ast\phi_{\varepsilon}$
and $\{\phi_{\varepsilon}\}_{\varepsilon\,>\,0}$ is a family of standard
compactly supported $C^{\infty}$ mollifiers.\\

\noindent Observe that 
\begin{equation}
D_{z}G_{\varepsilon}(z)=H_{p-1}(z)\,+\,\varepsilon\,(1+\vert z\vert^{2})^{\frac{p-2}{2}}\,z\,.\label{eq:DG}
\end{equation}
Now we set, for $s>\lambda$, 
\begin{equation}
\boldsymbol{\lambda}(s):=\begin{cases}
\begin{array}{cc}
\dfrac{(s-\lambda)^{p-1}}{s} & \mathrm{if}\,\,p>2,\,\,\,\,\,\,\,\,\,\,\,\\
(p-1)\,\dfrac{(s-\lambda)^{p-1}}{s} & \,\,\mathrm{if}\,\,1<p\le2,\,\,
\end{array}\end{cases}\label{eq:minaut}
\end{equation}
and 
\begin{equation}
\boldsymbol{\Lambda}(s):=\begin{cases}
\begin{array}{cc}
(p-1)\,(s-\lambda)^{p-2} & \mathrm{if}\,\,p>2,\,\,\,\,\,\,\,\,\,\,\,\\
(s-\lambda)^{p-2} & \mathrm{if}\,\,1<p\le2,
\end{array}\end{cases}\label{eq:maxaut}
\end{equation}
and $\boldsymbol{\lambda}(s)=0=\boldsymbol{\Lambda}(s)$ for $0\le s\le\lambda$.
These definitions prove to be useful in the formulation of the next
lemma, whose proof follows from \cite[Lemma 2.7]{BoDuGiPa} (see also
\cite{BoDuGiPa1}) together with standard estimates for the $p$-Laplace
operator. 
\begin{lem}
\label{lem:qform} Let $\varepsilon\in[0,1]$ and $z\in\mathbb{R}^{n}\setminus\{0\}$.
Then, for every $\zeta\in\mathbb{R}^{n}$ we have 
\begin{align*}
\big[\varepsilon\,c_{0}\,(1+|z|^{2})^{\frac{p-2}{2}}+\boldsymbol{\lambda}(|z|)\big]\,|\zeta|^{2}\le\langle D^{2}G_{\varepsilon}(z)\,\zeta,\zeta\rangle\le\big[\varepsilon\,c_{1}\,(1+|z|^{2})^{\frac{p-2}{2}}+\boldsymbol{\Lambda}(|z|)\big]\,|\zeta|^{2},
\end{align*}
where $c_{0}=\min\,\{1,p-1\}$ and $c_{1}=\max\,\{1,p-1\}$. 
\end{lem}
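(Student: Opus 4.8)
The plan is to exploit the additive structure of the regularized integrand. Setting $\Phi(z):=(\vert z\vert-\lambda)_{+}^{p}$ and $\Psi(z):=(1+\vert z\vert^{2})^{p/2}$, one has $G_{\varepsilon}=\tfrac{1}{p}\Phi+\tfrac{\varepsilon}{p}\Psi$, hence $D^{2}G_{\varepsilon}(z)=\tfrac{1}{p}D^{2}\Phi(z)+\tfrac{\varepsilon}{p}D^{2}\Psi(z)$ at every point where these Hessians exist (for $1<p<2$ this excludes the sphere $\{\vert z\vert=\lambda\}$, and the statement is to be read at points of twice differentiability of $G_\varepsilon$). It therefore suffices to bound the two quadratic forms $\langle\tfrac{1}{p}D^{2}\Phi(z)\zeta,\zeta\rangle$ and $\langle\tfrac{\varepsilon}{p}D^{2}\Psi(z)\zeta,\zeta\rangle$ separately and sum.

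For the widely degenerate part I would use that $\Phi$ is radial and compute its Hessian through the standard formula for radial functions: decomposing $\zeta=\zeta_{\parallel}+\zeta_{\perp}$ into the components parallel and orthogonal to $z$, one gets, for $\vert z\vert>\lambda$,
\[
\langle\tfrac{1}{p}D^{2}\Phi(z)\zeta,\zeta\rangle=(p-1)(\vert z\vert-\lambda)^{p-2}\,\vert\zeta_{\parallel}\vert^{2}+\frac{(\vert z\vert-\lambda)^{p-1}}{\vert z\vert}\,\vert\zeta_{\perp}\vert^{2},
\]
while $\tfrac{1}{p}D^{2}\Phi(z)=0$ when $0<\vert z\vert<\lambda$. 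Since $\vert\zeta_{\parallel}\vert^{2}+\vert\zeta_{\perp}\vert^{2}=\vert\zeta\vert^{2}$, the two relevant coefficients are $(p-1)(\vert z\vert-\lambda)^{p-2}$ and $(\vert z\vert-\lambda)^{p-1}/\vert z\vert$, and the conclusion follows by comparing them via the elementary bound $0<(\vert z\vert-\lambda)/\vert z\vert<1$: if $p>2$, then $p-1\ge1$ makes $(p-1)(\vert z\vert-\lambda)^{p-2}$ the larger and $(\vert z\vert-\lambda)^{p-1}/\vert z\vert$ the smaller of the two, giving precisely $\boldsymbol\lambda(\vert z\vert)\vert\zeta\vert^{2}\le\langle\tfrac{1}{p}D^{2}\Phi(z)\zeta,\zeta\rangle\le\boldsymbol\Lambda(\vert z\vert)\vert\zeta\vert^{2}$; if $1<p\le2$, then $p-1\le1$ and one checks instead that $(p-1)(\vert z\vert-\lambda)^{p-1}/\vert z\vert$ is below both coefficients while $(\vert z\vert-\lambda)^{p-2}$ is above both, yielding the bounds with the corresponding $\boldsymbol\lambda,\boldsymbol\Lambda$. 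Alternatively this step may be quoted verbatim from \cite[Lemma 2.7]{BoDuGiPa}.

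For the nondegenerate part I would invoke the classical ellipticity estimates for the regularized $p$-energy density: a direct differentiation gives
\[
\langle\tfrac{\varepsilon}{p}D^{2}\Psi(z)\zeta,\zeta\rangle=\varepsilon(1+\vert z\vert^{2})^{\frac{p-2}{2}}\vert\zeta\vert^{2}+\varepsilon(p-2)(1+\vert z\vert^{2})^{\frac{p-4}{2}}\langle z,\zeta\rangle^{2},
\]
and, estimating $0\le\langle z,\zeta\rangle^{2}\le(1+\vert z\vert^{2})\vert\zeta\vert^{2}$ while keeping track of the sign of $p-2$, one obtains $\varepsilon c_{0}(1+\vert z\vert^{2})^{\frac{p-2}{2}}\vert\zeta\vert^{2}\le\langle\tfrac{\varepsilon}{p}D^{2}\Psi(z)\zeta,\zeta\rangle\le\varepsilon c_{1}(1+\vert z\vert^{2})^{\frac{p-2}{2}}\vert\zeta\vert^{2}$ with $c_{0}=\min\{1,p-1\}$ and $c_{1}=\max\{1,p-1\}$. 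Adding this to the previous estimate produces the claimed two-sided bound for $\langle D^{2}G_{\varepsilon}(z)\zeta,\zeta\rangle$. The only genuine care needed is in the case distinction $p>2$ versus $1<p\le2$, since the roles of the two coefficients of $D^{2}\Phi$ (smallest versus largest) are interchanged and the definitions of $\boldsymbol\lambda$ and $\boldsymbol\Lambda$ are adjusted accordingly; no other difficulty arises, the non-smoothness of $\Phi$ along $\{\vert z\vert=\lambda\}$ in the subquadratic regime being immaterial since the estimate is needed only where $G_\varepsilon$ is twice differentiable.
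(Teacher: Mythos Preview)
Your proof is correct and follows essentially the same approach as the paper: both arguments compute the Hessian via the radial structure, identify the two principal curvatures (in the direction of $z$ and orthogonal to it), and then compare them through the same elementary inequalities with a case distinction on $p>2$ versus $1<p\le 2$. The only cosmetic difference is that you split $G_\varepsilon$ into its degenerate and non-degenerate parts and bound each Hessian separately before adding, whereas the paper keeps the two contributions together and does a single case analysis on the sign of the combined radial derivative; the computations and resulting bounds are identical.
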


\begin{proof}[\textbf{Proof.}]
Actually, setting for $t>0$ 
\[
h(t)=\frac{(t-\lambda)_{+}^{p-1}}{t}\,\,\,\,\,\,\,\,\,\,\text{and}\,\,\,\,\,\,\,\,\,\,a(t)=(1+t^{2})^{\frac{p-2}{2}},
\]
we can write 
\[
D_{z}G_{\varepsilon}(z)=\Big[h(|z|)+\varepsilon a(|z|)\Big]\,z
\]
and we can easily calculate 
\[
D^{2}G_{\varepsilon}(z)=\Big[h(|z|)+\varepsilon a(|z|)\Big]\mathbb{I}\,+\big[h'(|z|)+\varepsilon a'(|z|)\Big]\frac{z\otimes z}{|z|}\,.
\]
Thus we get 
\[
\langle D^{2}G_{\varepsilon}(z)\,\eta,\zeta\rangle=\Big[h(|z|)+\varepsilon a(|z|)\Big]\langle\eta,\zeta\rangle+\Big[h'(|z|)+\varepsilon a'(|z|)\Big]\sum_{i,j=1}^{n}\frac{z_{i}\,\eta_{i}\,z_{j}\,\zeta_{j}}{|z|}\,,
\]
for any $\eta,\zeta\in\mathbb{R}^{n}$. By the Cauchy-Schwarz inequality,
we have 
\begin{equation}
0\le\sum_{i,j=1}^{n}\frac{z_{i}\,\zeta_{i}\,z_{j}\,\zeta_{j}}{|z|}\le|z|\,|\zeta|^{2}.\label{cs}
\end{equation}
At this point, if {$h'(\vert z\vert)+\varepsilon a'(\vert z\vert)\ge0$}
(which occurs when $p\ge2$), from the lower bound in \eqref{cs}
we immediately obtain 
\[
\langle D^{2}G_{\varepsilon}(z)\,\zeta,\zeta\rangle\ge\big[h(|z|)+\varepsilon a(|z|)\big]|\zeta|^{2}=\,\frac{(|z|-\lambda)_{+}^{p-1}}{|z|}\,|\zeta|^{2}+\varepsilon(1+|z|^{2})^{\frac{p-2}{2}}|\zeta|^{2}.
\]
On the other hand, using the upper bound in \eqref{cs}, for $p\geq2$
we deduce 
\[
\langle D^{2}G_{\varepsilon}(z)\,\zeta,\zeta\rangle\le(p-1)(|z|-\lambda)_{+}^{p-2}\,|\zeta|^{2}+\varepsilon(p-1)(1+|z|^{2})^{\frac{p-2}{2}}|\zeta|^{2},
\]
where we have also used that 
\begin{equation}
h(t)+th'(t)=(p-1)(t-\lambda)_{+}^{p-2}\label{h_eq}
\end{equation}
and 
\[
a(t)+ta'(t)\le(p-1)(1+t^{2})^{\frac{p-2}{2}}\,\,\,\,\,\,\,\,\text{when}\,\,p\geq2.
\]
Otherwise, if {$h'(\vert z\vert)+\varepsilon a'(\vert z\vert)<0$}
(which may happen only when $1<p<2$), we easily get 
\[
\langle D^{2}G_{\varepsilon}(z)\,\zeta,\zeta\rangle\le\big[h(|z|)+\varepsilon a(|z|)\big]\,|\zeta|^{2}.
\]
For the derivation of the lower bound, we use the right inequality
in \eqref{cs} to deduce that 
\begin{eqnarray*}
\langle D^{2}G_{\varepsilon}(z)\zeta,\zeta\rangle & \ge & \big[h(|z|)+\varepsilon a(|z|)\big]|\zeta|^{2}+\big[h'(|z|)+\varepsilon a'(|z|)\big]\,|z|\,|\zeta|^{2}\\
\\
 & = & \big[h(|z|)+h'(|z|)|z|\big]|\zeta|^{2}+\big[a(|z|)+\varepsilon a'(|z|)|z|\big]\,|\zeta|^{2}\\
\\
 & \ge & (p-1)(|z|-\lambda)_{+}^{p-2}|\zeta|^{2}+\varepsilon(p-1)(1+|z|^{2})^{\frac{p-2}{2}}|\zeta|^{2},
\end{eqnarray*}
where, in the last line, we have used \eqref{h_eq} and the fact that
\[
a(t)+ta'(t)\ge(p-1)(1+t^{2})^{\frac{p-2}{2}}\,\,\,\,\,\,\,\,\text{when}\,\,1<p<2.
\]
This proves the claim.
\end{proof}
\noindent $\hspace*{1em}$In what follows, $u_{\varepsilon}\in u+W_{0}^{1,p}(B_{R})$
will be the unique weak solution to (\ref{eq:approximation}). By
standard elliptic regularity \cite[Chapter 8]{Giu}, we know that
$(1+|Du_{\varepsilon}|^{2})^{\frac{p-2}{4}}Du_{\varepsilon}\in W_{loc}^{1,2}(B_{R})$
and therefore $u_{\varepsilon}\in W_{loc}^{2,2}(B_{R})$. Moreover,
as usual, we shall denote by $p^{*}$ the Sobolev conjugate exponent
of $p$, defined as 
\[
p^{*}:=\begin{cases}
\frac{np}{n-p} & \text{if}\,\,p<n,\\
\text{any value in \ensuremath{(p,\infty)}} & \text{if}\,\,p\ge n,
\end{cases}
\]
and denote by $(p^{*})'$ its Hölder conjugate exponent.\\
 $\hspace*{1em}$The proofs of Theorems \ref{thm:theo1} and \ref{thm:theo2}
are crucially based on the following results. 
\begin{prop}[\textbf{Uniform energy estimate}]
 \label{prop:uniform} With the notation and under the assumptions
above, if $f\in L^{(p^{*})'}(B_{R})$, there exist two positive constants
$\varepsilon_{0}\leq1$ and $C\equiv C(n,p)$ such that 
\begin{equation}
\int_{B_{R}}\vert Du_{\varepsilon}\vert^{p}\,dx\,\leq\,C\left[1+\lambda^{p}\,+\,\Vert Du\Vert_{L^{p}(B_{R})}^{p}\,+\Vert f\Vert_{L^{(p^{*})'}(B_{R})}^{p'}\right]\label{eq:unifenest}
\end{equation}
for all $\varepsilon\in(0,\varepsilon_{0}]$. 
\end{prop}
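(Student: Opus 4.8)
\noindent The plan is to test the weak formulation of \eqref{eq:approximation} with the admissible function $\varphi:=u_{\varepsilon}-u\in W^{1,p}_{0}(B_{R})$ and to exploit the convexity of $G_{\varepsilon}$. First note that $G_{\varepsilon}$ is a convex function of class $C^{1}$: both summands in its definition are convex in $z$ for $p>1$ (equivalently, $D^{2}G_{\varepsilon}\ge 0$ away from the origin by Lemma \ref{lem:qform}), and by \eqref{eq:DG} one has $|DG_{\varepsilon}(Du_{\varepsilon})|\le c\,(1+|Du_{\varepsilon}|^{p-1})\in L^{p'}(B_{R})$, so the choice $\varphi=u_{\varepsilon}-u$ is legitimate. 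It gives
\[
\int_{B_{R}}\langle DG_{\varepsilon}(Du_{\varepsilon}),Du_{\varepsilon}-Du\rangle\,dx\,=\,\int_{B_{R}}f_{\varepsilon}\,(u_{\varepsilon}-u)\,dx,
\]
and the pointwise subgradient inequality $\langle DG_{\varepsilon}(Du_{\varepsilon}),Du_{\varepsilon}-Du\rangle\ge G_{\varepsilon}(Du_{\varepsilon})-G_{\varepsilon}(Du)$ coming from convexity then yields
\[
\int_{B_{R}}G_{\varepsilon}(Du_{\varepsilon})\,dx\,\le\,\int_{B_{R}}G_{\varepsilon}(Du)\,dx\,+\,\int_{B_{R}}f_{\varepsilon}\,(u_{\varepsilon}-u)\,dx.
\]

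\noindent I would then estimate the three integrals separately. For the left-hand side, discarding the nonnegative term $\frac{\varepsilon}{p}(1+|Du_{\varepsilon}|^{2})^{p/2}$ and using the elementary inequality $(t-\lambda)_{+}^{p}\ge 2^{1-p}t^{p}-\lambda^{p}$, valid for all $t\ge 0$, together with $R\le 1$, one gets the coercivity bound
\[
\int_{B_{R}}G_{\varepsilon}(Du_{\varepsilon})\,dx\,\ge\,\frac{2^{1-p}}{p}\int_{B_{R}}|Du_{\varepsilon}|^{p}\,dx\,-\,c(n,p)\,\lambda^{p}.
\]
For the first term on the right, since $\varepsilon\le 1$, $(|Du|-\lambda)_{+}\le|Du|$ and $(1+|Du|^{2})^{p/2}\le 2^{p-1}(1+|Du|^{p})$, one has $\int_{B_{R}}G_{\varepsilon}(Du)\,dx\le c(n,p)\,(1+\Vert Du\Vert_{L^{p}(B_{R})}^{p})$. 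The crucial term is $\int_{B_{R}}f_{\varepsilon}(u_{\varepsilon}-u)\,dx$: here I would use the Sobolev--Poincar\'e inequality for $u_{\varepsilon}-u$, which has zero boundary trace on $B_{R}$, namely $\Vert u_{\varepsilon}-u\Vert_{L^{p^{*}}(B_{R})}\le c(n,p)\Vert Du_{\varepsilon}-Du\Vert_{L^{p}(B_{R})}$ (when $p\ge n$ one fixes $p^{*}$ to be the chosen finite value; scale-invariance for $p<n$, or $R\le 1$ for $p\ge n$, keeps the constant free of $R$), followed by H\"older's inequality with exponents $(p^{*})'$ and $p^{*}$ and by the fact that mollification does not increase the $L^{(p^{*})'}$-norm (this is the only place where the smallness of $\varepsilon_{0}$ plays a role, just to make $f_{\varepsilon}$ well defined on $B_{R}$ from $f$ on a slightly larger ball $\Subset\Omega$). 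This produces
\[
\biggl|\int_{B_{R}}f_{\varepsilon}(u_{\varepsilon}-u)\,dx\biggr|\,\le\,c(n,p)\,\Vert f\Vert_{L^{(p^{*})'}(B_{R})}\bigl(\Vert Du_{\varepsilon}\Vert_{L^{p}(B_{R})}+\Vert Du\Vert_{L^{p}(B_{R})}\bigr).
\]

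\noindent Combining the three bounds, writing $X:=\Vert Du_{\varepsilon}\Vert_{L^{p}(B_{R})}$ and applying Young's inequality to absorb $c(n,p)\Vert f\Vert_{L^{(p^{*})'}(B_{R})}X$ into $\tfrac12\cdot\tfrac{2^{1-p}}{p}X^{p}+c\Vert f\Vert_{L^{(p^{*})'}(B_{R})}^{p'}$ (and, likewise, splitting the remaining mixed term $\Vert f\Vert_{L^{(p^{*})'}(B_{R})}\Vert Du\Vert_{L^{p}(B_{R})}$ via Young into powers $p'$ and $p$), one reaches a closed estimate of the form
\[
\frac{2^{-p}}{p}\,X^{p}\,\le\,c(n,p)\Bigl[1+\lambda^{p}+\Vert Du\Vert_{L^{p}(B_{R})}^{p}+\Vert f\Vert_{L^{(p^{*})'}(B_{R})}^{p'}\Bigr],
\]
which is precisely \eqref{eq:unifenest}; all constants being independent of $\varepsilon$, the bound is uniform for $\varepsilon\in(0,\varepsilon_{0}]$, while $\varepsilon_{0}\le 1$ need only be taken small enough for the mollification to make sense. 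I expect the only genuinely delicate point to be the treatment of the source term: it is exactly the vanishing boundary values of $u_{\varepsilon}-u$ that make Sobolev--Poincar\'e available, which in turn pins down $(p^{*})'$ as the natural integrability threshold for $f$ in \eqref{eq:unifenest}; everything else is a routine absorption argument.
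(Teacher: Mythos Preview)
Your argument is correct and follows the same overall strategy as the paper: test \eqref{eq:approximation} with $\varphi=u_{\varepsilon}-u$, control the source term via H\"older and Sobolev--Poincar\'e, and close with Young's inequality. The one substantive difference is that you invoke the convexity subgradient inequality $\langle DG_{\varepsilon}(Du_{\varepsilon}),Du_{\varepsilon}-Du\rangle\ge G_{\varepsilon}(Du_{\varepsilon})-G_{\varepsilon}(Du)$ to pass directly to an energy comparison, whereas the paper expands the inner product and estimates $\langle DG_{\varepsilon}(Du_{\varepsilon}),Du_{\varepsilon}\rangle$ and $\langle DG_{\varepsilon}(Du_{\varepsilon}),Du\rangle$ separately; your route is a shade cleaner and avoids one application of Young's inequality.

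One minor imprecision: the assertion that ``mollification does not increase the $L^{(p^{*})'}$-norm'' is true on $\mathbb{R}^{n}$ but not in general on $B_{R}$, where one only has $\Vert f_{\varepsilon}\Vert_{L^{(p^{*})'}(B_{R})}\le\Vert f\Vert_{L^{(p^{*})'}(B_{R+\varepsilon})}$. The paper handles this more carefully by using the strong convergence $f_{\varepsilon}\to f$ in $L^{(p^{*})'}(B_{R})$ to choose $\varepsilon_{0}$ so that $\Vert f_{\varepsilon}\Vert_{L^{(p^{*})'}(B_{R})}\le 1+\Vert f\Vert_{L^{(p^{*})'}(B_{R})}$ for $\varepsilon\le\varepsilon_{0}$. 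Since $B_{R}\Subset\Omega$ and $f$ is locally integrable on $\Omega$, your argument can be repaired in the same way; just be aware that this is the genuine reason for the $\varepsilon_{0}$, not merely well-definedness of $f_{\varepsilon}$.
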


\begin{proof}[\textbf{{Proof}}]
We insert in the weak formulation of (\ref{eq:approximation}) 
\[
\int_{B_{R}}\langle DG_{\varepsilon}(Du_{\varepsilon}),D\varphi\rangle\,dx\,=\,\int_{B_{R}}f_{\varepsilon}\,\varphi\,dx\,\,\,\,\,\,\,\,\,\,\mathrm{for\,\,every}\,\,\varphi\in W_{0}^{1,p}(B_{R}),
\]
the test function $\varphi=u_{\varepsilon}-u$. Recalling (\ref{eq:DG}),
this gives 
\begin{align}
 & \int_{B_{R}}\langle H_{p-1}(Du_{\varepsilon})+\varepsilon\,(1+\vert Du_{\varepsilon}\vert^{2})^{\frac{p-2}{2}}\,Du_{\varepsilon},Du_{\varepsilon}\rangle\,dx\label{eq4.4}\\
 & \,\,\,\,\,\,\,=\int_{B_{R}}\langle H_{p-1}(Du_{\varepsilon})+\varepsilon\,(1+\vert Du_{\varepsilon}\vert^{2})^{\frac{p-2}{2}}\,Du_{\varepsilon},Du\rangle\,dx\,+\int_{B_{R}}f_{\varepsilon}\,(u_{\varepsilon}-u)\,dx.\nonumber 
\end{align}
Since 
\[
\langle H_{p-1}(z)+\varepsilon\,(1+\vert z\vert^{2})^{\frac{p-2}{2}}z,z\rangle\,\geq\,(\vert z\vert-\lambda)_{+}^{p}\,+\,\varepsilon\,(1+\vert z\vert^{2})^{\frac{p-2}{2}}\vert z\vert^{2}\,\geq\,(\vert z\vert-\lambda)_{+}^{p}
\]
for every $z\in\mathbb{R}^{n}$, we can estimate the integrals in
\eqref{eq4.4}, thus obtaining 
\begin{align*}
 & \int_{B_{R}}(\vert Du_{\varepsilon}\vert-\lambda)_{+}^{p}\,dx\\
 & \,\,\,\,\,\,\,\leq\,\int_{B_{R}}\vert Du_{\varepsilon}\vert^{p-1}\,\vert Du\vert\,dx\,+\,\varepsilon\int_{B_{R}}(1+\vert Du_{\varepsilon}\vert^{2})^{\frac{p-1}{2}}\,\vert Du\vert\,dx\,+\,\Vert f_{\varepsilon}\Vert_{L^{(p^{*})'}(B_{R})}\,\Vert u_{\varepsilon}-u\Vert_{L^{p^{*}}(B_{R})}\\
 & \,\,\,\,\,\,\,\leq\left(1+2^{\frac{p-1}{2}}\right)\int_{B_{R}}\vert Du_{\varepsilon}\vert^{p-1}\,\vert Du\vert\,dx\,+\,2^{\frac{p-1}{2}}\int_{B_{R}}\vert Du\vert\,dx\\
 & \,\,\,\,\,\,\,\,\,\,\,\,\,\,+\,c(n,p)\,\Vert f_{\varepsilon}\Vert_{L^{(p^{*})'}(B_{R})}\,\Vert Du_{\varepsilon}-Du\Vert_{L^{p}(B_{R})}\,,
\end{align*}
where we have used Hölder's and Sobolev inequalities and the fact
that $\varepsilon,R\le1$. Now, applying Young's inequality with $\sigma>0$,
we arrive at 
\begin{align*}
 & \int_{B_{R}}(\vert Du_{\varepsilon}\vert-\lambda)_{+}^{p}\,dx\\
 & \,\,\,\,\,\,\,\leq\,\sigma\int_{B_{R}}\vert Du_{\varepsilon}\vert^{p}\,dx\,+\,c(n,p,\sigma)\int_{B_{R}}\vert Du\vert^{p}\,dx\,+\,c(n,p,\sigma)\left[1+\,\Vert f_{\varepsilon}\Vert_{L^{(p^{*})'}(B_{R})}^{p'}\right],
\end{align*}
where we have used again that $R\le1$. Since 
\begin{equation}
f_{\varepsilon}\rightarrow f\,\,\,\,\,\,\,\,\,\mathrm{strongly\,\,in\,\,}L^{(p^{*})'}(B_{R})\,\,\,\,\,\mathrm{as}\,\,\varepsilon\rightarrow0^{+},\label{eq:strongconv}
\end{equation}
there exists a positive number $\varepsilon_{0}\leq1$ such that 
\[
\Vert f_{\varepsilon}\Vert_{L^{(p^{*})'}(B_{R})}\,\leq\,1+\Vert f\Vert_{L^{(p^{*})'}(B_{R})}\,\,\,\,\,\,\,\,\mathrm{for\,\,all\,\,}\varepsilon\in(0,\varepsilon_{0}].
\]
Then, for $\varepsilon\in(0,\varepsilon_{0}]$, we have 
\begin{align}
 & \int_{B_{R}}(\vert Du_{\varepsilon}\vert-\lambda)_{+}^{p}\,dx\nonumber \\
 & \,\,\,\,\,\,\,\leq\,\sigma\int_{B_{R}}\vert Du_{\varepsilon}\vert^{p}\,dx\,+\,c\,\Vert Du\Vert_{L^{p}(B_{R})}^{p}\,+\,c\left[1+\Vert f\Vert_{L^{(p^{*})'}(B_{R})}^{p'}\right]\nonumber \\
 & \,\,\,\,\,\,\,\leq\,\sigma\int_{B_{R}}[\lambda+(\vert Du_{\varepsilon}\vert-\lambda)_{+}]^{p}\,dx\,+\,c\,\Vert Du\Vert_{L^{p}(B_{R})}^{p}+\,c\left[1+\Vert f\Vert_{L^{(p^{*})'}(B_{R})}^{p'}\right]\nonumber \\
 & \,\,\,\,\,\,\,\leq\,2^{p-1}\sigma\int_{B_{R}}(\vert Du_{\varepsilon}\vert-\lambda)_{+}^{p}\,dx\,+\,c\,\lambda^{p}\,+\,c\,\Vert Du\Vert_{L^{p}(B_{R})}^{p}\,+\,c\left[1+\Vert f\Vert_{L^{(p^{*})'}(B_{R})}^{p'}\right],\label{eq:est1}
\end{align}
where $c\equiv c(n,p,\sigma)>0$. Choosing $\sigma=\frac{1}{2^{p}}$
and absorbing the first term on the right-hand side of \eqref{eq:est1}
into the left-hand side, we obtain 
\[
\int_{B_{R}}(\vert Du_{\varepsilon}\vert-\lambda)_{+}^{p}\,dx\,\leq\,C\left[1+\lambda^{p}\,+\,\Vert Du\Vert_{L^{p}(B_{R})}^{p}\,+\,\Vert f\Vert_{L^{(p^{*})'}(B_{R})}^{p'}\right]
\]
for some finite positive constant $C$ depending on $n$ and $p$,
but not on $\varepsilon$. This estimate is sufficient to ensure the
validity of the assertion. 
\end{proof}
\begin{prop}[\textbf{Comparison estimate}]
\label{prop:confronto}With the notation and under the assumptions
above, if $f\in L^{(p^{*})'}(B_{R})$, there exists a positive constant
$C$ depending only on $n$ and $p$ such that the estimate 
\begin{align}
 & \int_{B_{R}}\left|\mathcal{V}_{\lambda}(Du_{\varepsilon})-\mathcal{V}_{\lambda}(Du)\right|^{2}dx\nonumber \\
 & \,\,\,\,\,\,\,\leq\,C\,\Vert f_{\varepsilon}-f\Vert_{L^{(p^{*})'}(B_{R})}\left[1+\lambda+\,\Vert Du\Vert_{L^{p}(B_{R})}\,+\,\Vert f\Vert_{L^{(p^{*})'}(B_{R})}^{\frac{1}{p-1}}\right]\nonumber \\
 & \,\,\,\,\,\,\,\,\,\,\,\,\,\,+\,C\,\varepsilon\left[1+\lambda^{p}\,+\,\Vert Du\Vert_{L^{p}(B_{R})}^{p}\,+\,\Vert f\Vert_{L^{(p^{*})'}(B_{R})}^{p'}\right]\label{eq:comparison}
\end{align}
holds for every $\varepsilon\in(0,\varepsilon_{0}]$, where $\varepsilon_{0}$
is the constant from Proposition \ref{prop:uniform}.
\end{prop}

\begin{proof}[\textbf{{Proof}}]
We proceed by testing equations (\ref{eq:degenerate}) and $(\ref{eq:approximation})_{1}$
with the map $\varphi=u_{\varepsilon}-u$. Thus we find 
\begin{align}
 & \int_{B_{R}}\langle H_{p-1}(Du_{\varepsilon})-H_{p-1}(Du),Du_{\varepsilon}-Du\rangle\,dx\,+\,\varepsilon\int_{B_{R}}\langle(1+\vert Du_{\varepsilon}\vert^{2})^{\frac{p-2}{2}}\,Du_{\varepsilon},Du_{\varepsilon}-Du\rangle\,dx\nonumber \\
 & \,\,\,\,\,\,\,=\int_{B_{R}}(f_{\varepsilon}-f)(u_{\varepsilon}-u)\,dx.\label{eq:001}
\end{align}
Using Lemma \ref{lem:vital}, the Cauchy-Schwarz inequality as well
as Hölder's and Young's inequalities, from \eqref{eq:001} we obtain
\begin{align*}
 & C\int_{B_{R}}\left|\mathcal{V}_{\lambda}(Du_{\varepsilon})-\mathcal{V}_{\lambda}(Du)\right|^{2}dx\,+\,\varepsilon\int_{B_{R}}(1+\vert Du_{\varepsilon}\vert^{2})^{\frac{p-2}{2}}\,\vert Du_{\varepsilon}\vert^{2}\,dx\\
 & \,\,\,\,\,\,\,\leq\,\Vert f_{\varepsilon}-f\Vert_{L^{(p^{*})'}(B_{R})}\,\Vert u_{\varepsilon}-u\Vert_{L^{p^{*}}(B_{R})}\,+\,\varepsilon\int_{B_{R}}(1+\vert Du_{\varepsilon}\vert^{2})^{\frac{p-2}{2}}\,\vert Du_{\varepsilon}\vert\,\vert Du\vert\,dx\\
 & \,\,\,\,\,\,\,\leq\,\Vert f_{\varepsilon}-f\Vert_{L^{(p^{*})'}(B_{R})}\,\Vert u_{\varepsilon}-u\Vert_{L^{p^{*}}(B_{R})}\,+\,\varepsilon\int_{B_{R}}(1+\vert Du_{\varepsilon}\vert^{2})^{\frac{p-1}{2}}\,\vert Du\vert\,dx\\
 & \,\,\,\,\,\,\,\leq\,\Vert f_{\varepsilon}-f\Vert_{L^{(p^{*})'}(B_{R})}\,\Vert u_{\varepsilon}-u\Vert_{L^{p^{*}}(B_{R})}\,+\,\frac{\varepsilon}{p'}\int_{B_{R}}(1+\vert Du_{\varepsilon}\vert^{2})^{\frac{p}{2}}\,dx\,+\,\frac{\varepsilon}{p}\,\Vert Du\Vert_{L^{p}(B_{R})}^{p}\,,
\end{align*}
where $C$ is a positive constant depending only on $n$ and $p$.
Now, let us consider the same $\varepsilon_{0}\in(0,1]$ as in Proposition
\ref{prop:uniform} and let $\varepsilon\in(0,\varepsilon_{0}]$.
Then, applying Sobolev's and Minkowski's inequalities, we get\begin{align*}
&\int_{B_{R}}\left|\mathcal{V}_{\lambda}(Du_{\varepsilon})-\mathcal{V}_{\lambda}(Du)\right|^{2}dx\\
&\,\,\,\,\,\,\,\leq\,C\,\Vert f_{\varepsilon}-f\Vert_{L^{(p^{*})'}(B_{R})}\,\Vert Du_{\varepsilon}-Du\Vert_{L^{p}(B_{R})}\,+\,\frac{C\,\varepsilon}{p'}\int_{B_{R}}(1+\vert Du_{\varepsilon}\vert^{2})^{\frac{p}{2}}\,dx\,+\,\frac{C\,\varepsilon}{p}\,\Vert Du\Vert_{L^{p}(B_{R})}^{p}\\
&\,\,\,\,\,\,\,\leq\,C\,\Vert f_{\varepsilon}-f\Vert_{L^{(p^{*})'}(B_{R})}\left(\Vert Du_{\varepsilon}\Vert_{L^{p}(B_{R})}\,+\,\Vert Du\Vert_{L^{p}(B_{R})}\right)\\
&\,\,\,\,\,\,\,\,\,\,\,\,\,\,+\,\frac{C\,\varepsilon}{p'}\int_{B_{R}}(1+\vert Du_{\varepsilon}\vert^{2})^{\frac{p}{2}}\,dx\,+\,\frac{C\,\varepsilon}{p}\,\Vert Du\Vert_{L^{p}(B_{R})}^{p}\\
&\,\,\,\,\,\,\,\leq\,C\,\Vert f_{\varepsilon}-f\Vert_{L^{(p^{*})'}(B_{R})}\left[1+\lambda+\,\Vert Du\Vert_{L^{p}(B_{R})}\,+\,\Vert f\Vert_{L^{(p^{*})'}(B_{R})}^{\frac{1}{p-1}}\right]\\
&\,\,\,\,\,\,\,\,\,\,\,\,\,\,+\,C\,\varepsilon\left[1+\lambda^{p}\,+\,\Vert Du\Vert_{L^{p}(B_{R})}^{p}\,+\,\Vert f\Vert_{L^{(p^{*})'}(B_{R})}^{p'}\right],
\end{align*}where, in the last two lines, we have used inequality (\ref{eq:unifenest}).
This concludes the proof.
\end{proof}

\section{Proof of Theorem \ref{thm:theo1} \label{sec:theo1}}

$\hspace*{1em}$In this section we prove Theorem \ref{thm:theo1},
by dividing the proof into two steps. First, we shall derive a suitable
uniform \textit{a priori} estimate for the weak solutions $u_{\varepsilon}$
of the regularized problems \eqref{eq:approximation}. Then, we conclude
with a standard comparison argument (see e.g.\ \cite{AmPa,DMS,GPdN})
which, combined with the estimates from Propositions \ref{prop:uniform},
\ref{prop:confronto} and \ref{prop:uniform2}, yields the local Sobolev
regularity of the function $\mathcal{V}_{\lambda}(Du)$. We
begin with the following result. 
\selectlanguage{british}%
\begin{prop}[\textbf{Uniform Sobolev estimate}]
\foreignlanguage{english}{ \label{prop:uniform2} }Under the assumptions
of Theorem \ref{thm:theo1} and with the notation above, there exists
a positive number $\varepsilon_{1}\leq1$ such that, for every $\varepsilon\in(0,\varepsilon_{1}]$
and every pair of concentric balls $B_{r/2}\subset B_{r}\subset B_{R}$,
we have\begin{align}\label{eq:Sobolev1}
&\int_{B_{r/2}}\vert D\mathcal{V}_{\lambda}(Du_{\varepsilon})\vert^{2}\,dx\nonumber\\
&\,\,\,\,\,\,\,\leq\left(C+\,\frac{C}{r^{2}}\right)\left[1+\lambda^{p}+\,\Vert Du\Vert_{L^{p}(B_{R})}^{p}\,+\Vert f\Vert_{L^{p'}(B_{R})}^{p'}\right]+\,C\,\Vert f\Vert_{B_{p',1}^{\frac{p-2}{p}}(B_{R})}^{p'}
\end{align} 
for a positive constant $C$ depending only on $n$ and $p$.
\end{prop}

\selectlanguage{english}%
\begin{proof}[\textbf{{Proof}}]
 Let us first assume that $\lambda>0$. Differentiating the equation
in (\ref{eq:approximation}) with respect to $x_{j}$ for some $j\in\{1,\dots,n\}$
and then integrating by parts, we obtain 
\begin{equation}
\int_{B_{R}}\langle D^{2}G_{\varepsilon}(Du_{\varepsilon})\,D(\partial_{j}u_{\varepsilon}),D\varphi\rangle\,dx\,=\,\int_{B_{R}}(\partial_{j}f_{\varepsilon})\,\varphi\,dx,\qquad\forall\,\varphi\in W_{0}^{1,p}(B_{R}).\label{secondvar}
\end{equation}
Let $\eta\in C_{0}^{\infty}(B_{r})$ be a standard cut-off function
such that 
\begin{equation}
0\le\eta\leq1,\,\,\,\,\,\,\eta\equiv1\,\,\,\mathrm{on}\,\,\overline{B}_{r/2}\,,\,\,\,\,\,\,\Vert D\eta\Vert_{\infty}\leq\,\frac{\tilde{c}}{r}\,,\label{eq:eta}
\end{equation}
and choose 
\[
\varphi\,=\,\eta^{2}\,(\partial_{j}u_{\varepsilon})\,\Phi\big((\vert Du_{\varepsilon}\vert-\lambda)_{+}\big),
\]
where $\Phi:[0,\infty)\rightarrow[0,\infty)$ is an increasing, locally
Lipschitz continuous function, such that $\Phi$ and $\Phi'$ are
bounded on $[0,\infty)$, $\Phi(0)=0$ and 
\begin{equation}
\Phi'(t)\,t\,\leq\,c_{\Phi}\,\Phi(t)\label{eq:derivative}
\end{equation}
for a suitable constant $c_{\Phi}>0$. Using the above choice of $\varphi$
as a test function in \eqref{secondvar}, we get 
\begin{equation}
\begin{split} & \int_{B_{r}}\langle D^{2}G_{\varepsilon}(Du_{\varepsilon})\,D(\partial_{j}u_{\varepsilon}),D(\partial_{j}u_{\varepsilon})\rangle\,\eta^{2}\,\Phi\left((\vert Du_{\varepsilon}\vert-\lambda)_{+}\right)\,dx\\
 & +\int_{B_{r}}\langle D^{2}G_{\varepsilon}(Du_{\varepsilon})\,D(\partial_{j}u_{\varepsilon}),D[(\vert Du_{\varepsilon}\vert-\lambda)_{+}]\rangle\,\eta^{2}\,(\partial_{j}u_{\varepsilon})\,\Phi'\left((\vert Du_{\varepsilon}\vert-\lambda)_{+}\right)\,dx\\
 & \,\,\,\,\,\,\,=\,-\,2\int_{B_{r}}\langle D^{2}G_{\varepsilon}(Du_{\varepsilon})\,D(\partial_{j}u_{\varepsilon}),D\eta\rangle\,\eta\,(\partial_{j}u_{\varepsilon})\,\Phi\left((\vert Du_{\varepsilon}\vert-\lambda)_{+}\right)\,dx\\
 & \,\,\,\,\,\,\,\,\,\,\,\,\,\,+\int_{B_{r}}(\partial_{j}f_{\varepsilon})(\partial_{j}u_{\varepsilon})\,\eta^{2}\,\Phi\left((\vert Du_{\varepsilon}\vert-\lambda)_{+}\right)\,dx.
\end{split}
\label{eq:equa1}
\end{equation}
As for the first term on the right-hand side of \eqref{eq:equa1},
we have 
\begin{align}
 & -\,2\int_{B_{r}}\langle D^{2}G_{\varepsilon}(Du_{\varepsilon})\,D(\partial_{j}u_{\varepsilon}),D\eta\rangle\,\eta\,(\partial_{j}u_{\varepsilon})\,\Phi\left((\vert Du_{\varepsilon}\vert-\lambda)_{+}\right)\,dx\nonumber \\
 & \,\,\,\,\,\,\,\leq\,2\int_{B_{r}}\sqrt{\langle D^{2}G_{\varepsilon}(Du_{\varepsilon})\,D(\partial_{j}u_{\varepsilon}),D(\partial_{j}u_{\varepsilon})\rangle}\,\sqrt{\langle D^{2}G_{\varepsilon}(Du_{\varepsilon})\,D\eta,D\eta\rangle}\,\eta\,\vert\partial_{j}u_{\varepsilon}\vert\,\Phi\left((\vert Du_{\varepsilon}\vert-\lambda)_{+}\right)\,dx\nonumber \\
 & \,\,\,\,\,\,\,\leq\,\frac{1}{2}\int_{B_{r}}\langle D^{2}G_{\varepsilon}(Du_{\varepsilon})\,D(\partial_{j}u_{\varepsilon}),D(\partial_{j}u_{\varepsilon})\rangle\,\eta^{2}\,\Phi\left((\vert Du_{\varepsilon}\vert-\lambda)_{+}\right)\,dx\nonumber \\
 & \,\,\,\,\,\,\,\,\,\,\,\,\,\,+\,\,2\int_{B_{r}}\langle D^{2}G_{\varepsilon}(Du_{\varepsilon})\,D\eta,D\eta\rangle\,\vert\partial_{j}u_{\varepsilon}\vert^{2}\,\Phi\left((\vert Du_{\varepsilon}\vert-\lambda)_{+}\right)\,dx,%
\label{eq:equa2}
\end{align}
where we have used Cauchy-Schwarz and Young's inequalities. Joining
\eqref{eq:equa1} and \eqref{eq:equa2}, we get 
\begin{align}
 & \int_{B_{r}}\langle D^{2}G_{\varepsilon}(Du_{\varepsilon})\,D(\partial_{j}u_{\varepsilon}),D(\partial_{j}u_{\varepsilon})\rangle\,\eta^{2}\,\Phi\left((\vert Du_{\varepsilon}\vert-\lambda)_{+}\right)\,dx\nonumber \\
 & +\int_{B_{r}}\langle D^{2}G_{\varepsilon}(Du_{\varepsilon})\,D(\partial_{j}u_{\varepsilon}),D[(\vert Du_{\varepsilon}\vert-\lambda)_{+}]\rangle\,\eta^{2}\,(\partial_{j}u_{\varepsilon})\,\Phi'\left((\vert Du_{\varepsilon}\vert-\lambda)_{+}\right)\,dx\nonumber \\
 & \,\,\,\,\,\,\,\le\,\frac{1}{2}\int_{B_{r}}\langle D^{2}G_{\varepsilon}(Du_{\varepsilon})\,D(\partial_{j}u_{\varepsilon}),D(\partial_{j}u_{\varepsilon})\rangle\,\eta^{2}\,\Phi\left((\vert Du_{\varepsilon}\vert-\lambda)_{+}\right)\,dx\nonumber \\
 & \,\,\,\,\,\,\,\,\,\,\,\,\,\,+\,\,2\int_{B_{r}}\langle D^{2}G_{\varepsilon}(Du_{\varepsilon})\,D\eta,D\eta\rangle\,\vert\partial_{j}u_{\varepsilon}\vert^{2}\,\Phi\left((\vert Du_{\varepsilon}\vert-\lambda)_{+}\right)\,dx\nonumber \\
 & \,\,\,\,\,\,\,\,\,\,\,\,\,\,+\int_{B_{r}}(\partial_{j}f_{\varepsilon})(\partial_{j}u_{\varepsilon})\,\eta^{2}\,\Phi\left((\vert Du_{\varepsilon}\vert-\lambda)_{+}\right)\,dx.\label{eq:mod1}
\end{align}
Reabsorbing the first integral in the right-hand side of \eqref{eq:mod1}
by the left-hand side and summing the resulting inequalities with
respect to $j$ from $1$ to $n$, we obtain 
\begin{equation}
I_{1}+I_{2}\,\leq\,I_{3}+I_{4}\,,\label{eq:integrals}
\end{equation}
where 
\begin{align*}
 & I_{1}:=\int_{B_{r}}\sum_{j=1}^{n}\,\langle D^{2}G_{\varepsilon}(Du_{\varepsilon})\,D(\partial_{j}u_{\varepsilon}),D(\partial_{j}u_{\varepsilon})\rangle\,\eta^{2}\,\Phi\left((\vert Du_{\varepsilon}\vert-\lambda)_{+}\right)\,dx,\\
 & I_{2}:=\,2\int_{B_{r}}\sum_{j=1}^{n}\,\langle D^{2}G_{\varepsilon}(Du_{\varepsilon})\,D(\partial_{j}u_{\varepsilon}),D[(\vert Du_{\varepsilon}\vert-\lambda)_{+}]\rangle\,\eta^{2}\,(\partial_{j}u_{\varepsilon})\,\Phi'\left((\vert Du_{\varepsilon}\vert-\lambda)_{+}\right)\,dx,\\
 & I_{3}:=\,4\int_{B_{r}}\sum_{j=1}^{n}\langle D^{2}G_{\varepsilon}(Du_{\varepsilon})\,D\eta,D\eta\rangle\,\vert\partial_{j}u_{\varepsilon}\vert^{2}\,\Phi\left((\vert Du_{\varepsilon}\vert-\lambda)_{+}\right)\,dx,\\
 & I_{4}:=\,2\int_{B_{r}}\sum_{j=1}^{n}\,(\partial_{j}f_{\varepsilon})(\partial_{j}u_{\varepsilon})\,\eta^{2}\,\Phi\left((\vert Du_{\varepsilon}\vert-\lambda)_{+}\right)\,dx.
\end{align*}
We now prove that $I_{2}$ is non-negative, thus we can drop
it in the following. Recalling the definitions \eqref{eq:minaut} and \eqref{eq:maxaut}, for $\vert Du_{\varepsilon}\vert>\lambda$
we have 
\begin{align}
&\sum_{j=1}^{n}\,\langle D^{2}G_{\varepsilon}(Du_{\varepsilon})\,D(\partial_{j}u_{\varepsilon}),D[(\vert Du_{\varepsilon}\vert-\lambda)_{+}]\rangle\,(\partial_{j}u_{\varepsilon})\nonumber\\
 & \,\,\,\,\,\,\,=\left[\frac{\boldsymbol{\Lambda}(\vert Du_{\varepsilon}\vert)}{\vert Du_{\varepsilon}\vert^{2}}\,-\,\frac{\boldsymbol{\lambda}(\vert Du_{\varepsilon}\vert)}{\vert Du_{\varepsilon}\vert^{2}}\,+\,\varepsilon\,(p-2)\,(1+\vert Du_{\varepsilon}\vert^{2})^{\frac{p-4}{2}}\right]\nonumber\\
 & \,\,\,\,\,\,\,\,\,\,\,\,\,\,\cdot\sum_{i,\,j,\,k\,=\,1}^{n}\,(\partial_{j}u_{\varepsilon})(\partial_{i}u_{\varepsilon})(\partial_{k}u_{\varepsilon})(\partial_{ij}^{2}u_{\varepsilon})\,\partial_{k}[(\vert Du_{\varepsilon}\vert-\lambda)_{+}]\nonumber\\
 & \,\,\,\,\,\,\,\,\,\,\,\,\,\,+\left[\boldsymbol{\lambda}(\vert Du_{\varepsilon}\vert)\,+\,\varepsilon\,(1+\vert Du_{\varepsilon}\vert^{2})^{\frac{p-2}{2}}\right]\sum_{i,\,j\,=\,1}^{n}\,(\partial_{j}u_{\varepsilon})(\partial_{ij}^{2}u_{\varepsilon})\,\partial_{i}[(\vert Du_{\varepsilon}\vert-\lambda)_{+}]\nonumber\\
 & \,\,\,\,\,\,\,=\left[\frac{\boldsymbol{\Lambda}(\vert Du_{\varepsilon}\vert)}{\vert Du_{\varepsilon}\vert}\,-\,\frac{\boldsymbol{\lambda}(\vert Du_{\varepsilon}\vert)}{\vert Du_{\varepsilon}\vert}\,+\,\varepsilon\,(p-2)\,(1+\vert Du_{\varepsilon}\vert^{2})^{\frac{p-4}{2}}\,\vert Du_{\varepsilon}\vert\right]\nonumber\\
 & \,\,\,\,\,\,\,\,\,\,\,\,\,\,\cdot\left[\sum_{k=1}^{n}\,(\partial_{k}u_{\varepsilon})\,\partial_{k}[(\vert Du_{\varepsilon}\vert-\lambda)_{+}]\right]^{2}\nonumber\\
 & \,\,\,\,\,\,\,\,\,\,\,\,\,\,+\left[\boldsymbol{\lambda}(\vert Du_{\varepsilon}\vert)\vert Du_{\varepsilon}\vert\,+\,\varepsilon\,(1+\vert Du_{\varepsilon}\vert^{2})^{\frac{p-2}{2}}\,\vert Du_{\varepsilon}\vert\right]\,\big\vert D[(\vert Du_{\varepsilon}\vert-\lambda)_{+}]\big\vert^{2},
\label{eq:equa3}
\end{align}
where we have used the fact that 
\[
\partial_{k}[(\vert Du_{\varepsilon}\vert-\lambda)_{+}]\,=\,\partial_{k}(\vert Du_{\varepsilon}\vert)\,=\,\frac{1}{\vert Du_{\varepsilon}\vert}\,\sum_{j=1}^{n}(\partial_{j}u_{\varepsilon})(\partial_{kj}^{2}u_{\varepsilon})\,\,\,\,\,\,\,\,\,\,\,\mathrm{when}\,\,\vert Du_{\varepsilon}\vert>\lambda.
\]
Thus, coming back to the estimate of $I_{2}$, from \eqref{eq:equa3}
we deduce 
\begin{align*}
I_{2}\,\geq\,2\bigintssss_{B_{r}}\eta^{2}\,\Phi' \left((\vert Du_{\varepsilon}\vert-\lambda)_{+}\right)\Bigg\{ & \left[\frac{\boldsymbol{\Lambda}(\vert Du_{\varepsilon}\vert)}{\vert Du_{\varepsilon}\vert}\,-\,\frac{\boldsymbol{\lambda}(\vert Du_{\varepsilon}\vert)}{\vert Du_{\varepsilon}\vert}\right]\cdot\left[\sum_{k=1}^{n}\,(\partial_{k}u_{\varepsilon})\,\partial_{k}[(\vert Du_{\varepsilon}\vert-\lambda)_{+}]\right]^{2}\\
 & +\,\boldsymbol{\lambda}(\vert Du_{\varepsilon}\vert)\vert Du_{\varepsilon}\vert\,\big\vert D[(\vert Du_{\varepsilon}\vert-\lambda)_{+}]\big\vert^{2}\Bigg\}\,dx.
\end{align*}
Now, arguing as in the proof of \cite[Lemma 4.1]{Marc}, for $\vert Du_{\varepsilon}\vert>\lambda$
we have 
\begin{equation}
\left[\sum_{k=1}^{n}\,(\partial_{k}u_{\varepsilon})\,\partial_{k}[(\vert Du_{\varepsilon}\vert-\lambda)_{+}]\right]^{2}\leq\,\vert Du_{\varepsilon}\vert^{2}\,\big\vert D[(\vert Du_{\varepsilon}\vert-\lambda)_{+}]\big\vert^{2}.\label{eq:dersec}
\end{equation}
This implies 
\[
I_{2}\,\geq\,2\bigintssss_{B_{r}}\eta^{2}\,\Phi'\left((\vert Du_{\varepsilon}\vert-\lambda)_{+}\right)\,\frac{\boldsymbol{\Lambda}(\vert Du_{\varepsilon}\vert)}{\vert Du_{\varepsilon}\vert}\left[\sum_{k=1}^{n}\,(\partial_{k}u_{\varepsilon})\,\partial_{k}[(\vert Du_{\varepsilon}\vert-\lambda)_{+}]\right]^{2}dx\,\geq\,0,
\]
where we have used the fact that $\Phi'\left((\vert Du_{\varepsilon}\vert-\lambda)_{+}\right)\geq0$. Thus, inequality (\ref{eq:integrals}) boils down to
\begin{equation}\label{eq:stimafond}
I_{1}\,\leq\,I_{3}+I_{4}\,.
\end{equation}
Now we choose
\begin{equation}
\Phi(t)\,:=\,\left(\frac{t}{t+\lambda}\right)^{1+\frac{2}{p-1}}\,\,\,\,\,\,\,\,\,\,\mathrm{for}\,\,t\geq0\,,\label{eq:Phi}
\end{equation}
and therefore 
\[
\Phi'(t)\,=\,\frac{p+1}{p-1}\cdot\frac{\lambda\, t^{\frac{2}{p-1}}}{(t+\lambda)^{2+\frac{2}{p-1}}}\,.
\]
Clearly, the function $\Phi$ in (\ref{eq:Phi}) satisfies (\ref{eq:derivative}) with $c_{\Phi}=\frac{p+1}{p-1}$. At this stage, we proceed by estimating separately the integrals in (\ref{eq:stimafond}).\\
\\
$\hspace*{1em}${\textbf{\textsc{Estimate
of $I_{1}$}}}\\
\\
$\hspace*{1em}$Applying Lemma \ref{lem:qform}, we get
\begin{equation}\label{I1_A}
I_1 \ge \int_{B_{r}}\boldsymbol{\lambda}(\vert Du_{\varepsilon}\vert)\,\vert D^{2}u_{\varepsilon}\vert^{2}\,\eta^{2}\,\Phi\left((\vert Du_{\varepsilon}\vert-\lambda)_{+}\right)\,dx\,.
\end{equation}
$\hspace*{1em}${\textbf{\textsc{Estimate
of $I_{3}$}}}\\
\\
$\hspace*{1em}$Using Lemma \ref{lem:qform}, (\ref{eq:eta}) and the fact that $\Phi\leq 1$, we infer
\begin{equation}\label{I3_A}
I_3 \le \,\frac{c(p)}{r^{2}}\int_{B_{r}}(1+\vert Du_{\varepsilon}\vert^{2})^{\frac{p}{2}}\,dx\,.
\end{equation}$\hspace*{1em}${\textbf{\textsc{Estimate
of $I_{4}$}}}
\\
\\
$\hspace*{1em}$By Theorem \ref{thm:extension}, there exists a linear and bounded
extension operator 
\[
\mathrm{ext}_{B_{r}}:B_{p',1}^{-2/p}(B_{r})\hookrightarrow B_{p',1}^{-2/p}(\mathbb{R}^{n})
\]
such that $\mathrm{re}_{B_{r}}\circ\mathrm{ext}_{B_{r}}=\mathrm{id}$,
where $\mathrm{re}_{B_{r}}$ is the restriction operator defined in
Section \ref{sec:Besov spaces} and the symbol $\mathrm{id}$ denotes
the identity in $B_{p',1}^{-2/p}(B_{r})$. Since $\partial_{j}f_{\varepsilon}=\mathrm{ext}_{B_{r}}(\partial_{j}f_{\varepsilon})$
almost everywhere in $B_{r}$, we have 
\[
\int_{B_{r}}(\partial_{j}f_{\varepsilon})(\partial_{j}u_{\varepsilon})\,\eta^{2}\,\Phi((\vert Du_{\varepsilon}\vert-\lambda)_{+})\,dx\,=\int_{B_{r}}\mathrm{ext}_{B_{r}}(\partial_{j}f_{\varepsilon})\cdot(\partial_{j}u_{\varepsilon})\,\eta^{2}\,\Phi((\vert Du_{\varepsilon}\vert-\lambda)_{+})\,dx.
\]
At this point, we need to estimate the integral containing $\mathrm{ext}_{B_{r}}(\partial_{j}f_{\varepsilon})$.
To this aim, we argue as in \cite[Proposition 3.2]{BS}. By definition
of dual norm, we get
\begin{align}\label{eq:dualinequality}
&\left|\int_{B_{r}}\mathrm{ext}_{B_{r}}(\partial_{j}f_{\varepsilon})\cdot(\partial_{j}u_{\varepsilon})\,\eta^{2}\,\Phi((\vert Du_{\varepsilon}\vert-\lambda)_{+})\,dx\right|\nonumber\\
&\,\,\,\,\,\,\,\leq\,\Vert\mathrm{ext}_{B_{r}}(\partial_{j}f_{\varepsilon})\Vert_{(\mathring{B}_{p,\infty}^{2/p}(\mathbb{R}^{n}))'}\,\Vert(\partial_{j}u_{\varepsilon})\,\eta^{2}\,\Phi((\vert Du_{\varepsilon}\vert-\lambda)_{+})\Vert_{B_{p,\infty}^{2/p}(\mathbb{R}^{n})}\nonumber\\
&\,\,\,\,\,\,\,=\,\Vert\mathrm{ext}_{B_{r}}(\partial_{j}f_{\varepsilon})\Vert_{(\mathring{B}_{p,\infty}^{2/p}(\mathbb{R}^{n}))'}\nonumber\\
&\,\,\,\,\,\,\,\,\,\,\,\,\,\,\cdot\left(\Vert(\partial_{j}u_{\varepsilon})\,\eta^{2}\,\Phi((\vert Du_{\varepsilon}\vert-\lambda)_{+})\Vert_{L^{p}(\mathbb{R}^{n})}\,+\left[(\partial_{j}u_{\varepsilon})\,\eta^{2}\,\Phi((\vert Du_{\varepsilon}\vert-\lambda)_{+})\right]_{B_{p,\infty}^{2/p}(\mathbb{R}^{n})}\right),
\end{align}where, in the last line, we have used the equivalence \eqref{eq:equivalence}.
By the properties of $\eta$ and the fact that $\Phi\leq1$, we have
\begin{equation}
\Vert(\partial_{j}u_{\varepsilon})\,\eta^{2}\,\Phi((\vert Du_{\varepsilon}\vert-\lambda)_{+})\Vert_{L^{p}(\mathbb{R}^{n})}\,\leq\,\Vert Du_{\varepsilon}\Vert_{L^{p}(B_{r})}\,.\label{eq:extra}
\end{equation}
Moreover, using Theorem \ref{duality}, we obtain 
\[
\Vert\mathrm{ext}_{B_{r}}(\partial_{j}f_{\varepsilon})\Vert_{(\mathring{B}_{p,\infty}^{2/p}(\mathbb{R}^{n}))'}\,\le\,c\,\Vert\mathrm{ext}_{B_{r}}(\partial_{j}f_{\varepsilon})\Vert_{B_{p',1}^{-2/p}(\mathbb{R}^{n})}\,,
\]
for some positive constant $c\equiv c(n,p)$. Combining the above
inequality and the boundedness of the operator $\mathrm{ext}_{B_{r}}$
yields 
\[
\Vert\mathrm{ext}_{B_{r}}(\partial_{j}f_{\varepsilon})\Vert_{(\mathring{B}_{p,\infty}^{2/p}(\mathbb{R}^{n}))'}\,\leq\,c\,\Vert\partial_{j}f_{\varepsilon}\Vert_{B_{p',1}^{-2/p}(B_{r})}\,.
\]
Furthermore, applying Theorem \ref{negder}, we find that 
\[
\Vert\partial_{j}f_{\varepsilon}\Vert_{B_{p',1}^{-2/p}(B_{r})}\,\le\,c\,\Vert f_{\varepsilon}\Vert_{B_{p',1}^{\frac{p-2}{p}}(B_{r})}.
\]
Combining the preceding inequalities, we infer 
\begin{equation}
\Vert\mathrm{ext}_{B_{r}}(\partial_{j}f_{\varepsilon})\Vert_{(\mathring{B}_{p,\infty}^{2/p}(\mathbb{R}^{n}))'}\,\le\,c\,\Vert f_{\varepsilon}\Vert_{B_{p',1}^{\frac{p-2}{p}}(B_{r})},\label{eq:derf}
\end{equation}
for a positive constant $c$ depending only on $n$ and $p$. Now it remains to estimate the term
\[
\left[(\partial_{j}u_{\varepsilon})\,\eta^{2}\,\Phi((\vert Du_{\varepsilon}\vert-\lambda)_{+})\right]_{B_{p,\infty}^{2/p}(\mathbb{R}^{n})}:=\,\sup_{\vert h\vert\,>\,0}\left(\int_{\mathbb{R}^{n}}\frac{\left|\delta_{h}\left((\partial_{j}u_{\varepsilon})\,\eta^{2}\,\Phi((\vert Du_{\varepsilon}\vert-\lambda)_{+})\right)\right|^{p}}{\vert h\vert^{2}}\,dx\right)^\frac{1}{p}.
\]
By applying Lemma \ref{lem:Lind}, we deduce 
\begin{align*}
 & \int_{\mathbb{R}^{n}}\frac{\left|\delta_{h}\left((\partial_{j}u_{\varepsilon})\,\eta^{2}\,\Phi((\vert Du_{\varepsilon}\vert-\lambda)_{+})\right)\right|^{p}}{\vert h\vert^{2}}\,dx\\
 & \,\,\,\,\,\,\,\leq\,\frac{c}{\vert h\vert^{2}}\int_{\mathbb{R}^{n}}\left|\delta_{h}\left(\vert\partial_{j}u_{\varepsilon}\vert^{\frac{p-2}{2}}(\partial_{j}u_{\varepsilon})\,\eta^{p}\,[\Phi((\vert Du_{\varepsilon}\vert-\lambda)_{+})]^{\frac{p}{2}}\right)\right|^{2}dx\\
 & \,\,\,\,\,\,\,\leq c(n,p)\int_{\mathbb{R}^{n}}\left|D\left(\vert\partial_{j}u_{\varepsilon}\vert^{\frac{p-2}{2}}(\partial_{j}u_{\varepsilon})\,\eta^{p}\,[\Phi((\vert Du_{\varepsilon}\vert-\lambda)_{+})]^{\frac{p}{2}}\right)\right|^{2}dx,
\end{align*}
where, in the last line, we have used the first statement in Lemma
\ref{lem:Giusti1}. By the properties of $\eta$ at (\ref{eq:eta})
and the boundedness of $\Phi$, one can easily obtain 
\begin{align}
\left[(\partial_{j}u_{\varepsilon})\,\eta^{2}\,\Phi((\vert Du_{\varepsilon}\vert-\lambda)_{+})\right]_{B_{p,\infty}^{2/p}(\mathbb{R}^{n})}^{p} & \leq\,c\int_{B_{r}}\left|D\left(\vert\partial_{j}u_{\varepsilon}\vert^{\frac{p-2}{2}}(\partial_{j}u_{\varepsilon})\,[\Phi((\vert Du_{\varepsilon}\vert-\lambda)_{+})]^{\frac{p}{2}}\right)\right|^{2}\eta^{2}\,dx\nonumber \\
 & \,\,\,\,\,\,\,+\,\,\frac{c}{r^{2}}\int_{B_{r}}\vert Du_{\varepsilon}\vert^{p}\,dx\,,\label{eq:equa9}
\end{align}
where $c\equiv c(n,p)>0$. Now, a straightforward computation reveals
that, for every $k\in\{1,\ldots,n\}$, we have 
\begin{align*}
 & \partial_{k}\left[\vert\partial_{j}u_{\varepsilon}\vert^{\frac{p-2}{2}}(\partial_{j}u_{\varepsilon})\,[\Phi((\vert Du_{\varepsilon}\vert-\lambda)_{+})]^{\frac{p}{2}}\right]\\
 & \,\,\,\,\,\,\,=\,\frac{p}{2}\,\vert\partial_{j}u_{\varepsilon}\vert^{\frac{p-2}{2}}(\partial_{kj}^{2}u_{\varepsilon})\,[\Phi((\vert Du_{\varepsilon}\vert-\lambda)_{+})]^{\frac{p}{2}}\\
 & \,\,\,\,\,\,\,\,\,\,\,\,\,\,+\,\,\frac{p}{2}\,\vert\partial_{j}u_{\varepsilon}\vert^{\frac{p-2}{2}}(\partial_{j}u_{\varepsilon})\,[\Phi((\vert Du_{\varepsilon}\vert-\lambda)_{+})]^{\frac{p-2}{2}}\,\Phi'((\vert Du_{\varepsilon}\vert-\lambda)_{+})\,\frac{\langle Du_{\varepsilon},\partial_{k}Du_{\varepsilon}\rangle}{\vert Du_{\varepsilon}\vert}\,.
\end{align*}
This yields 
\begin{equation}
\left|D\left(\vert\partial_{j}u_{\varepsilon}\vert^{\frac{p-2}{2}}(\partial_{j}u_{\varepsilon})\,[\Phi((\vert Du_{\varepsilon}\vert-\lambda)_{+})]^{\frac{p}{2}}\right)\right|^{2}\leq\,c(n,p)\,(\mathbf{A}_{1}+\mathbf{A}_{2})\,,\label{eq:A1+A2}
\end{equation}
where we set 
\[
\mathbf{A}_{1}:=\,\vert Du_{\varepsilon}\vert^{p-2}\,\vert D^{2}u_{\varepsilon}\vert^{2}\,[\Phi((\vert Du_{\varepsilon}\vert-\lambda)_{+})]^{p}
\]
and 
\[
\mathbf{A}_{2}:=\,\vert Du_{\varepsilon}\vert^{p}\,\vert D^{2}u_{\varepsilon}\vert^{2}\,[\Phi((\vert Du_{\varepsilon}\vert-\lambda)_{+})]^{p-2}\,[\Phi'((\vert Du_{\varepsilon}\vert-\lambda)_{+})]^{2}.
\]
We now estimate $\mathbf{A}_{1}$ and $\mathbf{A}_{2}$ in the set where $\vert Du_{\varepsilon}\vert>\lambda$, since both
$\mathbf{A}_{1}$ and $\mathbf{A}_{2}$ vanish in the set $\{|Du_{\varepsilon}|\le\lambda\}$. Note that, for $|Du_\varepsilon|>\lambda$, we have
\begin{align*}
\mathbf{A}_{1}&=\,\boldsymbol{\lambda}(\vert Du_{\varepsilon}\vert)\,\Phi(\vert Du_{\varepsilon}\vert-\lambda)\,\vert D^{2}u_{\varepsilon}\vert^{2}\,\frac{[\Phi(\vert Du_{\varepsilon}\vert-\lambda)]^{p-1}}{\boldsymbol{\lambda}(\vert Du_{\varepsilon}\vert)}\,\vert Du_{\varepsilon}\vert^{p-2}\\
&=\,\boldsymbol{\lambda}(\vert Du_{\varepsilon}\vert)\,\Phi(\vert Du_{\varepsilon}\vert-\lambda)\,\vert D^{2}u_{\varepsilon}\vert^{2}\left[\frac{\Phi(\vert Du_{\varepsilon}\vert-\lambda)}{\vert Du_{\varepsilon}\vert-\lambda}\,\vert Du_{\varepsilon}\vert\right]^{p-1}
\end{align*}
and 
\begin{align*}
\mathbf{A}_{2}\,&=\,\boldsymbol{\lambda}(\vert Du_{\varepsilon}\vert)\,\Phi(\vert Du_{\varepsilon}\vert-\lambda)\,\vert D^{2}u_{\varepsilon}\vert^{2}\left[\frac{\Phi(\vert Du_{\varepsilon}\vert-\lambda)}{\vert Du_{\varepsilon}\vert-\lambda}\,\vert Du_{\varepsilon}\vert\right]^{p-1}\left[\frac{\Phi'(\vert Du_{\varepsilon}\vert-\lambda)}{\Phi(\vert Du_{\varepsilon}\vert-\lambda)}\,\vert Du_{\varepsilon}\vert\right]^{2}\\
&=\mathbf{A}_{1}\left[\frac{\Phi'(\vert Du_{\varepsilon}\vert-\lambda)}{\Phi(\vert Du_{\varepsilon}\vert-\lambda)}\,\vert Du_{\varepsilon}\vert\right]^{2}.
\end{align*}
Recalling the definition of $\Phi$ in \eqref{eq:Phi}, we find that
\begin{equation*}
   \left[\frac{\Phi(\vert Du_{\varepsilon}\vert-\lambda)}{\vert Du_{\varepsilon}\vert-\lambda}\,\vert Du_{\varepsilon}\vert\right]^{p-1}=\left(\frac{\vert Du_{\varepsilon}\vert-\lambda}{|Du_\varepsilon|}\right)^{2}\le\,1\, 
\end{equation*}
and, moreover, 
\begin{equation*}
\left[\frac{\Phi'(\vert Du_{\varepsilon}\vert-\lambda)}{\Phi(\vert Du_{\varepsilon}\vert-\lambda)}\,|Du_\varepsilon|\right]^{2}=\left(\frac{p+1}{p-1}\right)^2\frac{\lambda^2}{(|Du_\varepsilon|-\lambda)^2}\,.
\end{equation*}\\
Therefore, combining the four previous estimates, for $\vert Du_\varepsilon\vert>\lambda$ we get 
\begin{equation}\label{est:A1}
\mathbf{A}_{1}\,\le\,c(p)\,\boldsymbol{\lambda}(\vert Du_{\varepsilon}\vert)\,\Phi(\vert Du_{\varepsilon}\vert-\lambda)\,\vert D^{2}u_{\varepsilon}\vert^{2}\end{equation}
and
\begin{align}\label{est:A2}
\mathbf{A}_{2}\,&=\,c(p)\,\boldsymbol{\lambda}(\vert Du_{\varepsilon}\vert)\,\Phi(\vert Du_{\varepsilon}\vert-\lambda)\,\vert D^{2}u_{\varepsilon}\vert^{2}\,\frac{(\vert Du_{\varepsilon}\vert-\lambda)^2}{\vert Du_{\varepsilon}\vert^2}\cdot\frac{\lambda^2}{(\vert Du_\varepsilon\vert-\lambda)^2}\nonumber\\
&\le\,c(p)\,\boldsymbol{\lambda}(\vert Du_{\varepsilon}\vert)\,\Phi(\vert Du_{\varepsilon}\vert-\lambda)\,\vert D^{2}u_{\varepsilon}\vert^{2}.
\end{align}
Joining estimates \eqref{eq:equa9}$-$\eqref{est:A2}, we obtain
\begin{align*}
\left[(\partial_{j}u_{\varepsilon})\,\eta^{2}\,\Phi((\vert Du_{\varepsilon}\vert-\lambda)_{+})\right]_{B_{p,\infty}^{2/p}(\mathbb{R}^{n})}^{p} & \leq\,c\int_{B_{r}}\boldsymbol{\lambda}(\vert Du_{\varepsilon}\vert)\,\Phi((\vert Du_{\varepsilon}\vert-\lambda)_{+})\,\vert D^{2}u_{\varepsilon}\vert^{2}\,\eta^{2}\,dx\\
 & \,\,\,\,\,\,\,+\,\frac{c}{r^{2}}\int_{B_{r}}\vert Du_{\varepsilon}\vert^{p}\,dx\,,
\end{align*}
where $c\equiv c(n,p)>0$. Combining the previous inequality,
(\ref{eq:extra}) and \eqref{eq:derf} with \eqref{eq:dualinequality}, and recalling the definition of $I_4$,
we get 
\begin{align}
 I_{4}\,&\le\,2\,\sum_{j=1}^n \left|\int_{B_{r}}(\partial_{j}f_{\varepsilon})(\partial_{j}u_{\varepsilon})\,\eta^{2}\,\Phi((\vert Du_{\varepsilon}\vert-\lambda)_{+})\,dx\right|\nonumber\\
&\le\,c\,\Vert f_{\varepsilon}\Vert_{B_{p',1}^{\frac{p-2}{p}}(B_{r})}\left[\left(\int_{B_{r}}\boldsymbol{\lambda}(\vert Du_{\varepsilon}\vert)\,\vert D^{2}u_{\varepsilon}\vert^{2}\,\eta^{2}\,\Phi((\vert Du_{\varepsilon}\vert-\lambda)_{+})\,dx\right)^{\frac{1}{p}}+\left(1+\,\frac{1}{r^{2/p}}\right)\Vert Du_{\varepsilon}\Vert_{L^{p}(B_{r})}\right],\label{eq:I4_A}
\end{align}
for a constant $c\equiv c(n,p)>0$.\\
\\
$\hspace*{1em}$Now, inserting estimates (\ref{I1_A}), 
 (\ref{I3_A}) and \eqref{eq:I4_A} in \eqref{eq:stimafond}, we obtain 
\begin{align*}
 & \int_{B_{r}}\boldsymbol{\lambda}(\vert Du_{\varepsilon}\vert)\,\vert D^{2}u_{\varepsilon}\vert^{2}\,\eta^{2}\,\Phi((\vert  Du_{\varepsilon}\vert-\lambda)_{+})\,dx\\
 & \,\,\,\,\,\,\,\leq\,c\left(\int_{B_{r}}\boldsymbol{\lambda}(\vert Du_{\varepsilon}\vert)\,\vert D^{2}u_{\varepsilon}\vert^{2}\,\eta^{2}\,\Phi((\vert  Du_{\varepsilon}\vert-\lambda)_{+})\,dx\right)^{\frac{1}{p}}\,\Vert f_{\varepsilon}\Vert_{B_{p',1}^{\frac{p-2}{p}}(B_{r})}\\
 & \,\,\,\,\,\,\,\,\,\,\,\,\,\,+\left(c+\,\frac{c}{r^{2/p}}\right)\Vert Du_{\varepsilon}\Vert_{L^{p}(B_{r})}\,\Vert f_{\varepsilon}\Vert_{B_{p',1}^{\frac{p-2}{p}}(B_{r})}\,+\,\frac{c}{r^{2}}\int_{B_{r}}(1+\vert Du_{\varepsilon}\vert^{2})^{\frac{p}{2}}\,dx\,,
\end{align*}
where $c\equiv c(n,p)>0$. Applying Young's inequality
to the first two terms on the right-hand side of the previous estimate, we get
\begin{align}\label{eq:key-ineq}
&\int_{B_{r}}\boldsymbol{\lambda}(\vert Du_{\varepsilon}\vert)\,\vert D^{2}u_{\varepsilon}\vert^{2}\,\eta^{2}\,\Phi((\vert  Du_{\varepsilon}\vert-\lambda)_{+})\,dx\nonumber\\
&\,\,\,\,\,\,\,\leq\left(C+\,\frac{C}{r^{2}}\right)\int_{B_{r}}(1+\vert Du_{\varepsilon}\vert^{2})^{\frac{p}{2}}\,dx\,+\,C\,\Vert f_{\varepsilon}\Vert_{B_{p',1}^{\frac{p-2}{p}}(B_{r})}^{p'},
\end{align}for some constant $C\equiv C(n,p)>0$.\\
$\hspace*{1em}$At
this point, recalling the definition of $\mathcal{V}_{\lambda}$
in (\ref{eq:Gfun})$-$(\ref{eq:Vfun}), a straightforward computation
reveals that, for every $j\in\{1,\ldots,n\}$, we have
\begin{align*}
\partial_{j}\mathcal{V}_{\lambda}(Du_{\varepsilon})\,=&\,\,\,\frac{(\vert Du_{\varepsilon}\vert-\lambda)_{+}^{\frac{p}{2}+\frac{1}{p-1}}}{\vert Du_{\varepsilon}\vert^{2}\,[\lambda+(\vert Du_{\varepsilon}\vert-\lambda)_{+}]^{1+\frac{1}{p-1}}}\,\langle Du_{\varepsilon},\partial_{j}Du_{\varepsilon}\rangle\,Du_{\varepsilon}\\
&\,\,+\,\mathcal{G}_{\lambda}((\vert Du_{\varepsilon}\vert-\lambda)_{+})\left[\frac{\partial_{j}Du_{\varepsilon}}{\vert Du_{\varepsilon}\vert}\,-\,\frac{\langle Du_{\varepsilon},\partial_{j}Du_{\varepsilon}\rangle}{\vert Du_{\varepsilon}\vert^{3}}\,Du_{\varepsilon}\right]
\end{align*}
if $\vert Du_{\varepsilon}\vert>\lambda$, and $\partial_{j}\mathcal{V}_{\lambda}(Du_{\varepsilon})=0$
otherwise. In the set $\{\vert Du_{\varepsilon}\vert>\lambda\}$, this yields
\begin{equation}
\vert D\mathcal{V}_{\lambda}(Du_{\varepsilon})\vert^{2}\leq\,\mathbf{B}_{1}+\mathbf{B}_{2}\,,\label{eq:B1+B2}
\end{equation}
where we define 
\[
\mathbf{B}_{1}:=\,2\,\frac{(\vert Du_{\varepsilon}\vert-\lambda)_{+}^{p+\frac{2}{p-1}}\,\vert D^{2}u_{\varepsilon}\vert^{2}}{[\lambda+(\vert Du_{\varepsilon}\vert-\lambda)_{+}]^{2+\frac{2}{p-1}}}
\]
and 
\[
\mathbf{B}_{2}:=\,8\,\frac{[\mathcal{G}_{\lambda}((\vert Du_{\varepsilon}\vert-\lambda)_{+})]^{2}\,\vert D^{2}u_{\varepsilon}\vert^{2}}{\vert Du_{\varepsilon}\vert^{2}}\,.
\]
We now estimate $\mathbf{B}_{1}$ and $\mathbf{B}_{2}$ separately
in the set where $\vert Du_{\varepsilon}\vert>\lambda$, since both
$\mathbf{B}_{1}$ and $\mathbf{B}_{2}$ vanish for $0<\vert Du_{\varepsilon}\vert\leq\lambda$.
Recalling the definitions \eqref{eq:minaut} and \eqref{eq:Phi}, we immediately have
\begin{align}
\mathbf{B}_{1}\,&=\,2\,\frac{(\vert Du_{\varepsilon}\vert-\lambda)^{p-1}\,\vert D^{2}u_{\varepsilon}\vert^{2}}{\vert Du_{\varepsilon}\vert}\left(\frac{\vert Du_{\varepsilon}\vert-\lambda}{\vert Du_{\varepsilon}\vert}\right)^{1+\frac{2}{p-1}}\nonumber\\
&=\,2\,\boldsymbol{\lambda}(\vert Du_{\varepsilon}\vert)\,\vert D^{2}u_{\varepsilon}\vert^{2}\,\Phi(\vert Du_{\varepsilon}\vert-\lambda)\,.\label{eq:B1}
\end{align}
As for $\mathbf{B}_{2}$, by Lemma \ref{lem:Glemma1} we obtain
\begin{align}\label{eq:B2}
\mathbf{B}_{2}\,&\leq\,\frac{32}{p^{2}}\,\,\frac{(\vert Du_{\varepsilon}\vert-\lambda)^{p+\frac{2p}{p-1}}\,\vert D^{2}u_{\varepsilon}\vert^{2}}{\vert Du_{\varepsilon}\vert^{2+\frac{2p}{p-1}}}\nonumber\\
&=\,\frac{32}{p^{2}}\,\,\frac{(\vert Du_{\varepsilon}\vert-\lambda)^{p-1}\,\vert D^{2}u_{\varepsilon}\vert^{2}}{\vert Du_{\varepsilon}\vert}\left(\frac{\vert Du_{\varepsilon}\vert-\lambda}{\vert Du_{\varepsilon}\vert}\right)^{1+\frac{2p}{p-1}}\nonumber\\
&=\,\frac{32}{p^{2}}\,\boldsymbol{\lambda}(\vert Du_{\varepsilon}\vert)\,\vert D^{2}u_{\varepsilon}\vert^{2}\,\Phi(\vert Du_{\varepsilon}\vert-\lambda)\,.
\end{align}
Joining estimates (\ref{eq:B1+B2})$-$\eqref{eq:B2}, we then find
\begin{equation}
\int_{B_{r}}\vert D\mathcal{V}_{\lambda}(Du_{\varepsilon})\vert^{2}\,\eta^{2}\,dx\,\leq\,c(p)\int_{B_{r}}\boldsymbol{\lambda}(\vert Du_{\varepsilon}\vert)\,\vert D^{2}u_{\varepsilon}\vert^{2}\,\eta^{2}\,\Phi((\vert Du_{\varepsilon}\vert-\lambda)_{+})\,dx\,,\label{eq:combo}
\end{equation}
which combined with \eqref{eq:key-ineq}, gives
\[
\int_{B_{r}}\vert D\mathcal{V}_{\lambda}(Du_{\varepsilon})\vert^{2}\,\eta^{2}\,dx\,\leq\left(C+\,\frac{C}{r^{2}}\right)\int_{B_{r}}(1+\vert Du_{\varepsilon}\vert^{2})^{\frac{p}{2}}\,dx\,+\,C\,\Vert f_{\varepsilon}\Vert_{B_{p',1}^{\frac{p-2}{p}}(B_{r})}^{p'}.
\]
$\hspace*{1em}$Let us now consider the same $\varepsilon_{0}\in(0,1]$
as in Proposition \ref{prop:uniform} and let $\varepsilon\in(0,\varepsilon_{0}]$.
Then, recalling that $\eta\equiv1$ on $\overline{B}_{r/2}$ and applying
estimate (\ref{eq:unifenest}), we obtain
\[
\int_{B_{r/2}}\vert D\mathcal{V}_{\lambda}(Du_{\varepsilon})\vert^{2}\,dx\,\leq\left(C+\,\frac{C}{r^{2}}\right)\left[1+\lambda^{p}+\,\Vert Du\Vert_{L^{p}(B_{R})}^{p}\,+\Vert f\Vert_{L^{(p^{*})'}(B_{R})}^{p'}\right]+\,C\,\Vert f_{\varepsilon}\Vert_{B_{p',1}^{\frac{p-2}{p}}(B_{r})}^{p'},
\]
where we have used the fact that $r<R\leq1$. Since $(p^{*})'<p'$,
using Hölder's inequality, from the above estimate we get 
\[
\int_{B_{r/2}}\vert D\mathcal{V}_{\lambda}(Du_{\varepsilon})\vert^{2}\,dx\,\leq\left(C+\,\frac{C}{r^{2}}\right)\left[1+\lambda^{p}+\,\Vert Du\Vert_{L^{p}(B_{R})}^{p}\,+\Vert f\Vert_{L^{p'}(B_{R})}^{p'}\right]+\,C\,\Vert f_{\varepsilon}\Vert_{B_{p',1}^{\frac{p-2}{p}}(B_{r})}^{p'}.
\]
Furthermore, there exists a positive number $\varepsilon_{1}\leq\varepsilon_{0}$
such that 
\[
\Vert f_{\varepsilon}\Vert_{B_{p',1}^{\frac{p-2}{p}}(B_{r})}\,\leq\,\Vert f\Vert_{B_{p',1}^{\frac{p-2}{p}}(B_{R})}<+\infty\,\,\,\,\,\,\,\,\,\,\mathrm{for\,\,every\,\,}\varepsilon\in(0,\varepsilon_{1}].
\]
\foreignlanguage{british}{Combining the last two estimates for $\varepsilon\in(0,\varepsilon_{1}]$,
we conclude the proof} in the case $\lambda>0$.\\
$\hspace*{1em}$Finally, when $\lambda=0$ the above proof can be
greatly simplified, as we can choose $\Phi\equiv1$ and we have 
\[
\mathcal{V}_{0}(Du_{\varepsilon})=\,\frac{2}{p}\,H_{\frac{p}{2}}(Du_{\varepsilon})=\,\frac{2}{p}\,\vert Du_{\varepsilon}\vert^{\frac{p-2}{2}}Du_{\varepsilon}\,.
\]
In this regard, we leave the details to the reader.
\end{proof}
\noindent $\hspace*{1em}$Combining Lemma \ref{lem:Giusti1} with
estimate \eqref{eq:Sobolev1}, we obtain the following
\begin{cor}
\noindent Let $\varepsilon_{1}\in(0,1]$ be the constant from Proposition
\ref{prop:uniform2}. Under the assumptions of Theorem \ref{thm:theo1}
and with the notation above, for every pair of concentric balls $B_{r/4}\subset B_{r}\subset B_{R}$
we have\begin{align}\label{eq:Sobolev2}
&\int_{B_{r/4}}\left|\tau_{j,h}\mathcal{V}_{\lambda}(Du_{\varepsilon})\right|^{2}dx\nonumber\\
&\,\,\,\,\,\,\,\leq\,\frac{C\,|h|^{2}\,(1+r^{2})}{r^{2}}\left[1+\lambda^{p}+\,\Vert Du\Vert_{L^{p}(B_{R})}^{p}\,+\Vert f\Vert_{L^{p'}(B_{R})}^{p'}\right]+\,C\,\vert h\vert^{2}\,\Vert f\Vert_{B_{p',1}^{\frac{p-2}{p}}(B_{R})}^{p'}\,,
\end{align} for every $j\in\{1,\ldots,n\}$, for every $h\in\mathbb{R}$ such
that $\vert h\vert<\frac{r}{8}$, for every $\varepsilon\in(0,\varepsilon_{1}]$
and a positive constant $C$ depending only
on $n$ and $p$.
\end{cor}

\noindent $\hspace*{1em}$We are now in a position to prove Theorem
\ref{thm:theo1}.
\begin{proof}[\textbf{{Proof of Theorem~\ref{thm:theo1}}}]
 Consider the same $\varepsilon_{1}\in(0,1]$ as in Proposition \ref{prop:uniform2}
and let $\varepsilon\in(0,\varepsilon_{1}]$. Moreover, let $u_{\varepsilon}$
be the unique energy solution of the Dirichlet problem (\ref{eq:approximation}).
Now we fix three concentric balls $B_{r/4}$, $B_{r/2}$ and $B_{r}$,
with $B_{r}\subset B_{R}\Subset\Omega$, $R\le1$, and use the finite
difference operator $\tau_{j,h}$ defined in Section \ref{subsec:DiffOpe},
for increments $h\in\mathbb{R}\setminus\{0\}$ such that $\vert h\vert<\frac{r}{8}$.
In what follows, we will denote by $C$ a positive constant which
neither depends on $\varepsilon$ nor on $h$. In order to obtain
an estimate for the finite difference $\tau_{j,h}\mathcal{V}_{\lambda}(Du)$,
we use the following comparison argument: 
\begin{align*}
 & \int_{B_{r/4}}\left|\tau_{j,h}\mathcal{V}_{\lambda}(Du)\right|^{2}dx\\
 & \,\,\,\,\,\,\,\leq\,4\int_{B_{r/4}}\left|\tau_{j,h}\mathcal{V}_{\lambda}(Du_{\varepsilon})\right|^{2}dx\,+\,4\int_{B_{r/4}}\left|\mathcal{V}_{\lambda}(Du_{\varepsilon})-\mathcal{V}_{\lambda}(Du)\right|^{2}dx\\
 & \,\,\,\,\,\,\,\,\,\,\,\,\,\,+\,4\int_{B_{r/4}}\left|\mathcal{V}_{\lambda}(Du_{\varepsilon}(x+he_{j}))-\mathcal{V}_{\lambda}(Du(x+he_{j}))\right|^{2}dx\\
 & \,\,\,\,\,\,\,\leq\,4\int_{B_{r/4}}\left|\tau_{j,h}\mathcal{V}_{\lambda}(Du_{\varepsilon})\right|^{2}dx\,+\,8\int_{B_{R}}\left|\mathcal{V}_{\lambda}(Du_{\varepsilon})-\mathcal{V}_{\lambda}(Du)\right|^{2}dx,
\end{align*}
where we have used the second statement in Lemma \ref{lem:Giusti1}.
Combining the previous estimate with \eqref{eq:Sobolev2} and (\ref{eq:comparison}),
for every $j\in\{1,\ldots,n\}$ we get\begin{align}\label{eq:teo1new}
&\int_{B_{r/4}}\left|\tau_{j,h}\mathcal{V}_{\lambda}(Du)\right|^{2}dx\nonumber\\
&\,\,\,\,\,\,\,\leq\,\frac{C\,|h|^{2}\,(1+r^{2})}{r^{2}}\left[1+\lambda^{p}+\,\Vert Du\Vert_{L^{p}(B_{R})}^{p}\,+\Vert f\Vert_{L^{p'}(B_{R})}^{p'}\right]+\,C\,\vert h\vert^{2}\,\Vert f\Vert_{B_{p',1}^{\frac{p-2}{p}}(B_{R})}^{p'}\nonumber\\
&\,\,\,\,\,\,\,\,\,\,\,\,\,\,+\,C\,\Vert f_{\varepsilon}-f\Vert_{L^{(p^{*})'}(B_{R})}\left[1+\lambda+\,\Vert Du\Vert_{L^{p}(B_{R})}\,+\,\Vert f\Vert_{L^{(p^{*})'}(B_{R})}^{\frac{1}{p-1}}\right]\nonumber\\
&\,\,\,\,\,\,\,\,\,\,\,\,\,\,+\,C\,\varepsilon\left[1+\lambda^{p}\,+\,\Vert Du\Vert_{L^{p}(B_{R})}^{p}\,+\,\Vert f\Vert_{L^{(p^{*})'}(B_{R})}^{p'}\right],
\end{align}which holds for every sufficiently small $h\in\mathbb{R}\setminus\{0\}$
and a constant $C\equiv C(n,p)>0$. Therefore, recalling \eqref{eq:strongconv} and letting
$\varepsilon\searrow0$ in \eqref{eq:teo1new}, we obtain
\begin{align*}
 & \int_{B_{r/4}}\left|\Delta_{j,h}\mathcal{V}_{\lambda}(Du)\right|^{2}dx\\
 & \,\,\,\,\,\,\,\leq\left(C+\,\frac{C}{r^{2}}\right)\left[1+\lambda^{p}+\,\Vert Du\Vert_{L^{p}(B_{R})}^{p}\,+\Vert f\Vert_{L^{p'}(B_{R})}^{p'}\right]+\,C\,\Vert f\Vert_{B_{p',1}^{\frac{p-2}{p}}(B_{R})}^{p'}.
\end{align*}
Since the above inequality holds for every $j\in\{1,\ldots,n\}$ and
every sufficiently small $h\neq0$, by Lemma \ref{lem:RappIncre}
we may conclude that $\mathcal{V}_{\lambda}(Du)\in W_{loc}^{1,2}(\Omega,\mathbb{R}^{n})$.
Moreover, letting $h\rightarrow0$ in the previous inequality, we
obtain estimate \eqref{eq:estteo1} for every ball $B_{R}\Subset\Omega$
with $R\le1$. The validity of \eqref{eq:estteo1} for arbitrary balls
follows from a standard covering argument.
\end{proof}

\section{Proof of Theorem \ref{thm:theo2} \label{sec:teorema2}}

\noindent $\hspace*{1em}$This section is devoted to the proof of
Theorem \ref{thm:theo2}. Actually, here we limit ourselves to deriving
the \textit{a priori} estimates for $1<p\leq2$, since inequality
\eqref{eq:estteo2} can be obtained using the same arguments presented
in Section \ref{sec:theo1}. In what follows, we shall keep the notations
used for the proof of Proposition \ref{prop:uniform2}.
\begin{proof}[\textbf{{Proof of Theorem~\ref{thm:theo2}}}]
\noindent  Let us first assume that $\lambda>0$. Arguing as in the
proof of Proposition \ref{prop:uniform2}, we define the integrals
$I_{1}$$-$$I_{4}$ exactly as in \eqref{eq:integrals}. We need
to treat differently only the integrals $I_{2}$ and $I_{4}$, in
which the new assumptions $1<p\leq2$ and $f\in L_{loc}^{\frac{np}{n(p-1)+2-p}}(\Omega)$
are involved. Under these new hypotheses and for $\vert Du_\varepsilon \vert > \lambda$, equality \eqref{eq:equa3} is replaced by
\begin{align}
&\sum_{j=1}^{n}\,\langle D^{2}G_{\varepsilon}(Du_{\varepsilon})\,D(\partial_{j}u_{\varepsilon}),D[(\vert Du_\varepsilon\vert-\lambda)_{+}]\rangle\,(\partial_{j}u_{\varepsilon})\nonumber\\
& \,\,\,\,\,\,\,=\left[(p-1)\,\frac{\boldsymbol{\Lambda}(\vert Du_{\varepsilon}\vert)}{\vert Du_{\varepsilon}\vert}\,-\,\frac{\boldsymbol{\lambda}(\vert Du_{\varepsilon}\vert)}{(p-1)\vert Du_{\varepsilon}\vert}\,+\,\varepsilon\,(p-2)\,(1+\vert Du_{\varepsilon}\vert^{2})^{\frac{p-4}{2}}\,\vert Du_{\varepsilon}\vert\right]\nonumber\\
 & \,\,\,\,\,\,\,\,\,\,\,\,\,\,\cdot\left[\sum_{k=1}^{n}\,(\partial_{k}u_{\varepsilon})\,\partial_{k}[(\vert Du_\varepsilon\vert-\lambda)_{+}]\right]^{2}\nonumber\\
 & \,\,\,\,\,\,\,\,\,\,\,\,\,\,+\left[\frac{\boldsymbol{\lambda}(\vert Du_\varepsilon\vert)\,\vert Du_\varepsilon\vert}{p-1}\,+\,\varepsilon\,(1+\vert Du_{\varepsilon}\vert^{2})^{\frac{p-2}{2}}\,\vert Du_{\varepsilon}\vert\right]\,\big\vert D[(\vert Du_\varepsilon\vert-\lambda)_{+}]\big\vert^{2},
\label{eq:equa3subq}
\end{align}
where we have used the definitions \eqref{eq:minaut} and \eqref{eq:maxaut} again. It comes out that $I_{2}$ is non-negative, as in the
super-quadratic case. Indeed, estimates \eqref{eq:equa3subq} and \eqref{eq:dersec}
lead us to 
\begin{align*}
I_{2}\, & \ge\,2\int_{B_{r}}\eta^{2}\,\Phi'((\vert Du_\varepsilon\vert-\lambda)_{+})\\
 & \,\,\,\,\,\,\,\,\,\,\cdot\Bigg\{\left[(p-1)\,\frac{\boldsymbol{\Lambda}(\vert Du_{\varepsilon}\vert)}{\vert Du_{\varepsilon}\vert}\,-\,\frac{\boldsymbol{\lambda}(\vert Du_{\varepsilon}\vert)}{(p-1)\vert Du_{\varepsilon}\vert}+\varepsilon(p-2)(1+|Du_{\varepsilon}|^{2})^{\frac{p-4}{2}}|Du_{\varepsilon}|\right]\\
 & \,\,\,\,\,\,\,\,\,\,\,\,\,\,\,\cdot\left[\sum_{k=1}^{n}\,(\partial_{k}u_{\varepsilon})\,\partial_{k}[(\vert Du_\varepsilon\vert-\lambda)_{+}]\right]^{2}+\,\frac{\boldsymbol{\lambda}(\vert Du_\varepsilon\vert)\,\vert Du_\varepsilon\vert}{p-1}\,\big\vert D[(\vert Du_\varepsilon\vert-\lambda)_{+}]\big\vert^{2}\\
 & \,\,\,\,\,\,\,\,\,\,\,\,\,\,\,+\,\varepsilon\,(1+|Du_{\varepsilon}|^{2})^{\frac{p-4}{2}}\,|Du_{\varepsilon}|^{3}\,\big\vert D[(\vert Du_\varepsilon\vert-\lambda)_{+}]\big\vert^{2}\Bigg\}\,dx\\
 & \ge\,2\int_{B_{r}}\eta^{2}\,\Phi'((\vert Du_\varepsilon\vert-\lambda)_{+})\left[(p-1)\,\frac{\boldsymbol{\Lambda}(\vert Du_{\varepsilon}\vert)}{\vert Du_{\varepsilon}\vert}\,+\,\varepsilon(p-1)(1+|Du_{\varepsilon}|^{2})^{\frac{p-4}{2}}|Du_{\varepsilon}|\right]\\
 & \,\,\,\,\,\,\,\,\,\,\,\,\,\,\,\cdot\left[\sum_{k=1}^{n}\,(\partial_{k}u_{\varepsilon})\,\partial_{k}[(\vert Du_\varepsilon\vert-\lambda)_{+}]\right]^{2}dx\,\ge\,0\,,
\end{align*}
where the function $\Phi$ is defined in \eqref{eq:Phi}. Then, using (\ref{eq:eta}), (\ref{eq:Phi}), Lemma \ref{lem:qform} and the fact that $I_{2}\ge0$,
from \eqref{eq:integrals} we now obtain 
\begin{align}
 & \int_{B_{r}}\boldsymbol{\lambda}(\vert Du_{\varepsilon}\vert)\,\vert D^{2}u_{\varepsilon}\vert^{2}\,\eta^{2}\,\Phi((\vert Du_\varepsilon\vert-\lambda)_{+})\,dx\nonumber \\
 & \,\,\,\,\,\,\,\,\,\leq\,\frac{c(p)}{r^{2}}\int_{B_{r}}(1+\vert Du_{\varepsilon}\vert^{2})^{\frac{p}{2}}\,dx\,+\,{\frac{c(p)}{r^{2}}\int_{B_{r}}\boldsymbol{\Lambda}(\vert Du_{\varepsilon}\vert)\,\vert Du_{\varepsilon}\vert^{2}\,\Phi((\vert Du_\varepsilon\vert-\lambda)_{+})\,dx}\nonumber \\
 & \,\,\,\,\,\,\,\,\,\,\,\,\,\,\,\,+\,c(p)\,\sum_{j=1}^{n}\int_{B_{r}}(\partial_{j}f_{\varepsilon})(\partial_{j}u_{\varepsilon})\,\eta^{2}\,\Phi((\vert Du_\varepsilon\vert-\lambda)_{+})\,dx\nonumber \\
 & \,\,\,\,\,\,\,\,\,\leq\,\frac{c(p)}{r^{2}}\int_{B_{r}}(1+\vert Du_{\varepsilon}\vert^{2})^{\frac{p}{2}}\,dx\,+c(p)\,\sum_{j=1}^{n}\int_{B_{r}}(\partial_{j}f_{\varepsilon})(\partial_{j}u_{\varepsilon})\,\eta^{2}\,\Phi((\vert Du_\varepsilon\vert-\lambda)_{+})\,dx\,.\label{eq:equa5.1}
\end{align}
At this
point, we integrate by parts and then apply Hölder's inequality in
the second integral on right-hand side of \eqref{eq:equa5.1}. This
gives 
\begin{equation}
\begin{split} & \left|\int_{B_{r}}(\partial_{j}f_{\varepsilon})(\partial_{j}u_{\varepsilon})\,\eta^{2}\,\Phi((\vert Du_\varepsilon\vert-\lambda)_{+})\,dx\right|\\
 & \,\,\,\,\,\,\,\leq\,\Vert f_{\varepsilon}\Vert_{L^{\frac{np}{n(p-1)+2-p}}(B_{r})}\,\Vert\partial_{j}[(\partial_{j}u_{\varepsilon})\,\eta^{2}\,\Phi((\vert Du_\varepsilon\vert-\lambda)_{+})]\Vert_{L^{\frac{np}{n-2+p}}(\mathbb{R}^{n})}.
\end{split}
\label{eq:dualinequality2}
\end{equation}
From now on, we will only deal with the case $n\geq3$ and $1<p<2$,
since the remaining cases imply that 
\[
\frac{np}{n(p-1)+2-p}\,=\,\frac{np}{n-2+p}\,=\,2
\]
and can be addressed by suitably modifying the arguments used in Section
\ref{sec:theo1}. Note that in the case $p=n=2$, we have $(p^{*})'<p'=2$,
and therefore we can continue to argue as in the proof of Proposition
\ref{prop:uniform2}.\\
For ease of notation, we now set 
\[
Z(x):=\,\eta^{2}(x)\cdot(\partial_{j}u_{\varepsilon}(x))\cdot\Phi((\vert Du_\varepsilon\vert-\lambda)_{+})\,.
\]
Since $\frac{np}{n-2+p}<2$ {for $n\geq3$ and $1<p<2$}, an application
of Hölder's inequality and Lemma \ref{D1} yield 
\begin{align*}
 & \int_{\mathbb{R}^{n}}\left|\tau_{j,h}Z(x)\right|^{\frac{np}{n-2+p}}\,dx\\
 & =\,\int_{\mathbb{R}^{n}}\left|\tau_{j,h}Z(x)\right|^{\frac{np}{n-2+p}}\left(\vert Z(x+he_{j})\vert^{2}\,+\,\vert Z(x)\vert^{2}\right)^{\frac{(p-2)}{4}\frac{np}{n-2+p}}\left(\vert Z(x+he_{j})\vert^{2}\,+\,\vert Z(x)\vert^{2}\right)^{\frac{(2-p)}{4}\frac{np}{n-2+p}}\,dx\\
 & \le\,c(n,p)\left(\int_{\mathbb{R}^{n}}\left|\tau_{j,h}Z(x)\right|^{2}\left(\vert Z(x+he_{j})\vert^{2}\,+\,\vert Z(x)\vert^{2}\right)^{\frac{p-2}{2}}\,dx\right)^{\frac{np}{2(n-2+p)}}\left(\int_{\mathbb{R}^{n}}\vert Z(x)\vert^{\frac{np}{n-2}}\,dx\right)^{\frac{(n-2)(2-p)}{2(n-2+p)}}\\
 & \le\,c(n,p)\left(\int_{\mathbb{R}^{n}}\left|\tau_{j,h}\left(\vert Z(x)\vert^{\frac{p-2}{2}}Z(x)\right)\right|^{2}\,dx\right)^{\frac{np}{2(n-2+p)}}\left(\int_{\mathbb{R}^{n}}\vert Z(x)\vert^{\frac{np}{n-2}}\,dx\right)^{\frac{(n-2)(2-p)}{2(n-2+p)}}
\end{align*}
for every $h\in\mathbb{R}\setminus\{0\}$. Dividing both sides by
$\vert h\vert^{\frac{np}{n-2+p}}$ and letting $h\rightarrow0$, by
virtue of Lemma \ref{lem:RappIncre} we obtain 
\begin{align}
 & \int_{\mathbb{R}^{n}}\left|\partial_{j}Z(x)\right|^{\frac{np}{n-2+p}}\,dx\nonumber \\
 & \le c(n,p)\left(\int_{\mathbb{R}^{n}}\left|\partial_{j}\left(\vert Z(x)\vert^{\frac{p-2}{2}}Z(x)\right)\right|^{2}\,dx\right)^{\frac{np}{2(n-2+p)}}\left(\int_{\mathbb{R}^{n}}\vert Z(x)\vert^{\frac{np}{n-2}}\,dx\right)^{\frac{(n-2)(2-p)}{2(n-2+p)}}\nonumber \\
 & \le c(n,p)\left(\int_{\mathbb{R}^{n}}\left|D\left(\vert Z(x)\vert^{\frac{p-2}{2}}Z(x)\right)\right|^{2}\,dx\right)^{\frac{np}{2(n-2+p)}}\left(\int_{\mathbb{R}^{n}}\vert Z(x)\vert^{\frac{np}{n-2}}\,dx\right)^{\frac{(n-2)(2-p)}{2(n-2+p)}}\nonumber \\
 & =c(n,p)\left(\int_{\mathbb{R}^{n}}\left|D\left(\vert Z(x)\vert^{\frac{p-2}{2}}Z(x)\right)\right|^{2}\,dx\right)^{\frac{np}{2(n-2+p)}}\left(\int_{\mathbb{R}^{n}}\Big\vert|Z(x)|^{\frac{p-2}{2}}Z(x)\Big\vert^{\frac{2n}{n-2}}\,dx\right)^{\frac{(n-2)(2-p)}{2(n-2+p)}}\nonumber \\
 & \le c(n,p)\left(\int_{\mathbb{R}^{n}}\left|D\left(\vert Z(x)\vert^{\frac{p-2}{2}}Z(x)\right)\right|^{2}\,dx\right)^{\frac{np}{2(n-2+p)}}\left(\int_{\mathbb{R}^{n}}\left|D\left(\vert Z(x)\vert^{\frac{p-2}{2}}Z(x)\right)\right|^{2}\,dx\right)^{\frac{n(2-p)}{2(n-2+p)}}\nonumber \\
 & =c(n,p)\left(\int_{\mathbb{R}^{n}}\left|D\left(\vert Z(x)\vert^{\frac{p-2}{2}}Z(x)\right)\right|^{2}\,dx\right)^{\frac{n}{n-2+p}}.\label{eq:modifica02}
\end{align}
Recalling the definition of $Z$, calculating the gradient in the
right-hand side of \eqref{eq:modifica02}, using the properties of
$\eta$ and recalling that $\Phi\le1$, we get 
\begin{align}
 & \Vert\,\partial_{j}[\eta^{2}(\partial_{j}u_{\varepsilon})\,\Phi((\vert Du_\varepsilon\vert-\lambda)_{+})]\Vert_{L^{\frac{np}{n-2+p}}(\mathbb{R}^{n})}\nonumber \\
 & \,\,\,\,\,\,\,\le\,c\left(\int_{\mathbb{R}^{n}}\left|D\left(\eta^{p}\vert\partial_{j}u_{\varepsilon}\vert^{\frac{p-2}{2}}(\partial_{j}u_{\varepsilon})\,[\Phi((\vert Du_\varepsilon\vert-\lambda)_{+})]^{\frac{p}{2}}\right)\right|^{2}\,dx\right)^{\frac{1}{p}}\nonumber \\
 & \,\,\,\,\,\,\,\le\,c\left(\int_{B_{r}}\eta^{2p}\left|D\left(\vert\partial_{j}u_{\varepsilon}\vert^{\frac{p-2}{2}}(\partial_{j}u_{\varepsilon})\,[\Phi((\vert Du_\varepsilon\vert-\lambda)_{+})]^{\frac{p}{2}}\right)\right|^{2}\,dx\right)^{\frac{1}{p}}\nonumber \\
 & \,\,\,\,\,\,\,\,\,\,\,\,\,\,+c\left(\int_{B_{r}}\eta^{2p-2}\left|D\eta\right|^{2}\vert Du_{\varepsilon}\vert^{p}\,dx\right)^{\frac{1}{p}}.\label{eq:equa2.2}
\end{align}
Inserting \eqref{eq:equa2.2} into \eqref{eq:dualinequality2} and
using the properties of $\eta$, we obtain
\begin{align}\label{eq:teo2new}
&\left|\int_{B_{r}}(\partial_{j}f_{\varepsilon})(\partial_{j}u_{\varepsilon})\,\eta^{2}\,\Phi((\vert Du_\varepsilon\vert-\lambda)_{+})\,dx\right|\nonumber\\
&\,\,\,\,\,\,\,\leq\,c\,\Vert f_{\varepsilon}\Vert_{L^{\frac{np}{n(p-1)+2-p}}(B_{r})}\,\left(\int_{B_{r}}\eta^{2}\left|D\left(\vert\partial_{j}u_{\varepsilon}\vert^{\frac{p-2}{2}}(\partial_{j}u_{\varepsilon})\,[\Phi((\vert Du_\varepsilon\vert-\lambda)_{+})]^{\frac{p}{2}}\right)\right|^{2}\,dx\right)^{\frac{1}{p}}\nonumber\\
&\,\,\,\,\,\,\,\,\,\,\,\,\,\,+\,\frac{c}{r^{2/p}}\,\Vert f_{\varepsilon}\Vert_{L^{\frac{np}{n(p-1)+2-p}}(B_{r})}\,\left(\int_{B_{r}}\vert Du_{\varepsilon}\vert^{p}\,dx\right)^{\frac{1}{p}}.
\end{align}
Now, combining \eqref{eq:teo2new} with \eqref{eq:equa5.1}, we have
\begin{align*}
 & \int_{B_{r}}\boldsymbol{\lambda}(\vert Du_{\varepsilon}\vert)\,\vert D^{2}u_{\varepsilon}\vert^{2}\,\eta^{2}\,\Phi((\vert Du_\varepsilon\vert-\lambda)_{+})\,dx\\
 & \,\,\,\,\,\,\,\leq\,\frac{c}{r^{2}}\int_{B_{r}}(1+\vert Du_{\varepsilon}\vert^{2})^{\frac{p}{2}}\,dx\,+\,\frac{c}{r^{2/p}}\,\Vert f_{\varepsilon}\Vert_{L^{\frac{np}{n(p-1)+2-p}}(B_{r})}\,\Vert Du_{\varepsilon}\Vert_{L^{p}(B_{r})}\\
 & \,\,\,\,\,\,\,\,\,\,\,\,\,\,\,+\,c\,\Vert f_{\varepsilon}\Vert_{L^{\frac{np}{n(p-1)+2-p}}(B_{r})}\,\sum_{j=1}^{n}\left(\int_{B_{r}}\eta^{2}\left|D\left(\vert\partial_{j}u_{\varepsilon}\vert^{\frac{p-2}{2}}(\partial_{j}u_{\varepsilon})\,[\Phi((\vert Du_\varepsilon\vert-\lambda)_{+})]^{\frac{p}{2}}\right)\right|^{2}\,dx\right)^{\frac{1}{p}}.
\end{align*}
The last integral can be estimated
using \eqref{eq:A1+A2}, \eqref{est:A1} and \eqref{est:A2}. Thus we infer 
\begin{align*}
 & \int_{B_{r}}\boldsymbol{\lambda}(\vert Du_{\varepsilon}\vert)\,\vert D^{2}u_{\varepsilon}\vert^{2}\,\eta^{2}\,\Phi((\vert Du_\varepsilon\vert-\lambda)_{+})\,dx\\
 & \,\,\,\,\,\,\,\leq\,\frac{c}{r^{2}}\int_{B_{r}}(1+\vert Du_{\varepsilon}\vert^{2})^{\frac{p}{2}}\,dx\,+\,\frac{c}{r^{2/p}}\,\Vert f_{\varepsilon}\Vert_{L^{\frac{np}{n(p-1)+2-p}}(B_{r})}\,\Vert Du_{\varepsilon}\Vert_{L^{p}(B_{r})}\\
 & \,\,\,\,\,\,\,\,\,\,\,\,\,\,\,+\,c\,\Vert f_{\varepsilon}\Vert_{L^{\frac{np}{n(p-1)+2-p}}(B_{r})}\left(\int_{B_{r}}\boldsymbol{\lambda}(\vert Du_{\varepsilon}\vert)\,\vert D^{2}u_{\varepsilon}\vert^{2}\,\eta^{2}\,\Phi((\vert Du_\varepsilon\vert-\lambda)_{+})\,dx\right)^{\frac{1}{p}},
\end{align*}
where $c\equiv c(n,p)>0$. Applying Young's inequality to reabsorb the last integral by the left-hand
side, and then using inequality (\ref{eq:combo}), we derive 
\begin{align*}
\int_{B_{r}}\vert D\mathcal{V}_{\lambda}(Du_{\varepsilon})\vert^{2}\,\eta^{2}\,dx\,\leq\,\frac{c}{r^{2}}\int_{B_{r}}(1+\vert Du_{\varepsilon}\vert^{2})^{\frac{p}{2}}\,dx\,+\,c\,\Vert f_{\varepsilon}\Vert_{L^{\frac{np}{n(p-1)+2-p}}(B_{r})}^{p'}\,.
\end{align*}
The desired conclusion follows by arguing as in the proofs of Proposition
\ref{prop:uniform2} and Theorem \ref{thm:theo1}, observing that
\[
(p^{*})'=\frac{np}{np-n+p}\,<\,\frac{np}{n(p-1)+2-p}\qquad\text{for every}\,\,p>1.
\]
$\hspace*{1em}$Finally, when $\lambda=0$ the above proof can be
greatly simplified, as we can choose $\Phi\equiv1$ and we have
\[
\mathcal{V}_{0}(Du_{\varepsilon})=\,\frac{2}{p}\,\vert Du_{\varepsilon}\vert^{\frac{p-2}{2}}Du_{\varepsilon}\,.
\]
We leave the details to the reader.
\end{proof}
\medskip{}

\noindent \textbf{Acknowledgements.} We would like to thank the reviewer for his/her valuable comments, which helped to improve our paper.\\
The authors are members of the Gruppo Nazionale per l’Analisi
Matematica, la Probabilità e le loro Applicazioni (GNAMPA) of the
Istituto Nazionale di Alta Matematica (INdAM). P. Ambrosio has been
partially supported through the INdAM$-$GNAMPA 2025 Project “Regolarità ed esistenza per operatori anisotropi” (CUP E5324001950001). In addition, P. Ambrosio acknowledges
financial support under the National Recovery and Resilience Plan (NRRP), Mission 4, Component 2, Investment 1.1, Call for tender No. 104 published on 02/02/2022 by the Italian Ministry of University and Research (MUR), funded by the European Union - NextGenerationEU -
Project PRIN\_CITTI 2022 - Title ``Regularity problems in sub-Riemannian
structures'' - CUP J53D23003760006 - Bando 2022 - Prot. 2022F4F2LH. A.G. Grimaldi and A. Passarelli
di Napoli have been partially supported through the INdAM$-$GNAMPA
2025 Project “Regolarità di soluzioni di equazioni paraboliche a crescita nonstandard degeneri” (CUP E5324001950001). A. Passarelli di Napoli has also
been supported 
by the Centro Nazionale per la Mobilità Sostenibile (CN00000023) -
Spoke 10 Logistica Merci (CUP E63C22000930007). A.G. Grimaldi has also been partially supported by PNRR - Missione 4 “Istruzione e Ricerca” Componente 2 “Dalla Ricerca all'Impresa” -
Investimento 1.2 “Finanziamento di progetti presentati da giovani ricercatori” (CUP I63C25000150004) and through the project “Geometric-Analytic Methods for PDEs and Applications (GAMPA)” - funded by European Union - NextGenerationEU - within the PRIN 2022 program (D.D. 104 - 02/02/2022 Ministero dell’Università e della Ricerca). This manuscript reflects only the authors’ views and opinions and the Ministry cannot be considered responsible for them.\medskip{}


\lyxaddress{\textbf{$\quad$}\\
$\hspace*{1em}$\textbf{Pasquale Ambrosio}\\
Dipartimento di Matematica, Università di Bologna\\
Piazza di Porta S. Donato 5, 40126 Bologna, Italy.\\
\textit{E-mail address}: pasquale.ambrosio@unibo.it}

\lyxaddress{$\hspace*{1em}$\textbf{Antonio Giuseppe Grimaldi}\\
Dipartimento di Ingegneria, Università degli Studi di Napoli “Parthenope''\\
Centro Direzionale Isola C4 - 80143 Napoli, Italy.\\
\textit{E-mail address}: antoniogiuseppe.grimaldi@collaboratore.uniparthenope.it}

\lyxaddress{$\hspace*{1em}$\textbf{Antonia Passarelli di Napoli}\\
Dipartimento di Matematica e Applicazioni ``R. Caccioppoli''\\
Università degli Studi di Napoli ``Federico II''\\
Via Cintia, 80126 Napoli, Italy.\\
\textit{E-mail address}: antpassa@unina.it}
\selectlanguage{british}%

\lyxaddress{\textbf{$\quad$}}
\end{document}